  \DeclareMathAlphabet{\mathcal}{OMS}{cmsy}{m}{n} 
\newcommand{\bm}[1]{\vec{#1}}
\newcommand{\ds}[0]{\displaystyle}
\newcommand{\poly}[1]{\mathbb{P}_{#1}}
\newcommand{\fnc}[1]{\ensuremath{\mathcal{#1}}}
\newcommand{\bfnc}[1]{\ensuremath{\bm{\mathcal{#1}}}}
\newcommand{\ufnc}[0]{\fnc{U}}
\newcommand{\vfnc}[0]{\fnc{V}}
\newcommand{\ffnc}[0]{\fnc{F}}
\newcommand{\Tmesh}[0]{\ensuremath{\mathcal{T}_{h}}}
\newcommand{\nodes}[0]{\ensuremath{\Xi_{\kappa}}}
\newcommand{\mat}[1]{\ensuremath{\tens{#1}}}
\newcommand{\Rs}[0]{\ensuremath{\mat{R}}}
\newcommand{\Qxi}[0]{\ensuremath{\mat{Q}_{\xi}}}
\newcommand{\Qeta}[0]{\ensuremath{\mat{Q}_{\eta}}}
\newcommand{\Qx}[0]{\ensuremath{\mat{Q}_{x}}}
\newcommand{\Qy}[0]{\ensuremath{\mat{Q}_{y}}}
\newcommand{\Sx}[0]{\ensuremath{\mat{S}_{x}}}
\newcommand{\Sy}[0]{\ensuremath{\mat{S}_{y}}}
\newcommand{\Dxi}[0]{\ensuremath{\mat{D}_{\xi}}}
\newcommand{\Deta}[0]{\ensuremath{\mat{D}_{\eta}}}
\newcommand{\Dx}[0]{\ensuremath{\mat{D}_{x}}}
\newcommand{\Dy}[0]{\ensuremath{\mat{D}_{y}}}
\newcommand{\Ex}[0]{\ensuremath{\mat{E}_{x}}}
\newcommand{\overbar}[1]{\mkern 1.5mu\overline{\mkern-1.5mu#1\mkern-1.5mu}\mkern 1.5mu}
\newcommand{\barDx}[0]{\overbar{\mat{D}}_{x}}
\newcommand{\barDy}[0]{\overbar{\mat{D}}_{y}}
\newcommand{\barSx}[0]{\overbar{\mat{S}}_{x}}
\newcommand{\barSy}[0]{\overbar{\mat{S}}_{y}}
\newcommand{\barH}[0]{\overbar{\mat{H}}}
\newcommand{\barHk}[0]{\ensuremath{\overbar{\mat{H}}_{\kappa}}}
\newcommand{\barRsk}[0]{\ensuremath{\overbar{\mat{R}}_\kappa}}
\newcommand{\barPrjk}[0]{\ensuremath{\overbar{\mat{P}}_{\kappa}}}
\newcommand{\barSxk}[0]{\overbar{\mat{S}}_{x,\kappa}}
\newcommand{\barSyk}[0]{\overbar{\mat{S}}_{y,\kappa}}
\renewcommand{\H}[0]{\ensuremath{\mat{H}}}
\newcommand{\nk}[0]{\ensuremath{n_{\kappa}}}
\newcommand{\Rsk}[0]{\ensuremath{\mat{R}_\kappa}}
\newcommand{\Qxk}[0]{\ensuremath{\mat{Q}_{\xi,\kappa}}}
\newcommand{\Exk}[0]{\ensuremath{\mat{E}_{\xi,\kappa}}}
\newcommand{\Sxk}[0]{\ensuremath{\mat{S}_{\xi,\kappa}}}
\newcommand{\Hk}[0]{\ensuremath{\mat{H}_{\kappa}}}
\newcommand{\Dxk}[0]{\ensuremath{\mat{D}_{\xi,\kappa}}}
\newcommand{\Disk}[0]{\ensuremath{\mat{M}_{\kappa}^{\mat{D}}}}
\newcommand{\Prjk}[0]{\ensuremath{\mat{P}_{\kappa}}}
\newcommand{\Mlps}[0]{\ensuremath{\mat{M}_{\kappa}^{\mat{P}}}}
\newcommand{\Rgk}[0]{\ensuremath{\mat{R}_{\gamma,\kappa}}}
\newcommand{\Hg}[0]{\ensuremath{\mat{H}_{\gamma}}}
\newcommand{\Id}[0]{\mat{I}}
\newcommand{\Fx}[0]{\mat{F}_{x}}
\newcommand{\Fy}[0]{\mat{F}_{y}}
\newcommand{\uh}[0]{\ensuremath{\bm{u}_{h}}}
\newcommand{\vh}[0]{\ensuremath{\bm{v}_{h}}}
\newcommand{\wh}[0]{\ensuremath{\bm{w}_{h}}}
\newcommand{\fh}[0]{\ensuremath{\bm{f}_{h}}}
\newcommand{\uk}[0]{\ensuremath{\bm{u}_{\kappa}}}
\newcommand{\vk}[0]{\ensuremath{\bm{v}_{\kappa}}}
\newcommand{\wk}[0]{\ensuremath{\bm{w}_{\kappa}}}
\newcommand{\fk}[0]{\ensuremath{\bm{f}_{\kappa}}}
\newcommand{\utildek}[0]{\ensuremath{\tilde{\bm{u}}_{\kappa}}}
\newcommand{\unode}[1]{\bm{u}_{#1}}  
\newcommand{\flux}[1]{\bfnc{F}^{\star}_{#1}}
\DeclareMathOperator{\mydiag}{diag}
\newcommand{\etal}[0]{{\em et~al.\@}\xspace}
\newcommand{\eg}[0]{{e.g.\@}\xspace}
\newcommand{\ie}[0]{{i.e.\@}\xspace}
\newcommand{\ignore}[1]{} 
\newcommand{\phm}[0]{\phantom{-}}
\definecolor{green}{RGB}{51,153,51}
\begin{document}
  
  \title{Entropy-stable, high-order summation-by-parts discretizations without interface penalties\thanks{This is a pre-print of an article published in The Journal of Scientific Computing. The final authenticated version is available online at: https://doi.org/10.1007/s10915-020-01154-8}} 
  
  
  \author{Jason E. Hicken}
  
  
  \institute{J. Hicken \at
                110 8th St Troy NY 12180 US\\
                Tel.: +1-518-276-4893\\
                Fax: +1-518-276-6025\\
                \email{hickej2@rpi.edu}
  }
  
  \date{Received: date / Accepted: date}

  \maketitle
  
  \begin{abstract}
    The paper presents high-order accurate, energy-, and entropy-stable
    discretizations constructed from summation-by-parts (SBP) operators.  Notably,
    the discretizations assemble global SBP operators and use continuous
    solutions, unlike previous efforts that use discontinuous SBP discretizations.
    Derivative-based dissipation and local-projection stabilization (LPS) are
    investigated as options for stabilizing the baseline discretization.  These
    stabilizations are equal up to a multiplicative constant in one dimension, but
    only LPS remains well conditioned for general, multidimensional SBP operators.
    Furthermore, LPS is able to take advantage of the additional nodes required by
    degree $2p$ diagonal-norms, resulting in an element-local stabilization with a
    bounded spectral radius.  An entropy-stable version of LPS is easily obtained
    by applying the projection on the entropy variables.  Numerical experiments
    with the linear-advection and Euler equations demonstrate the accuracy,
    efficiency, and robustness of the stabilized discretizations, and the
    continuous approach compares favorably with the more common discontinuous SBP
    methods.
  
  \keywords{summation-by-parts \and entropy stable \and stabilization}
  \subclass{65M06 \and 65M60 \and 65M70 \and 65M12}
  \end{abstract}

\section{Introduction}\label{sec:intro}

High-order discretizations have been put forward as a possible means of improving the efficiency of computational fluid dynamics (CFD) simulations.  The arguments in favor of high-order discretizations include both improved accuracy-per-degree-of-freedom as well as better cache usage on current and future architectures.  Despite these potential advantages, the use of high-order CFD remains uncommon in industry.  Mesh generation of curved elements is one bottleneck facing high-order methods, but the issue I focus on here is robustness: high-order discretizations have inherently less numerical dissipation, which makes them prone to instabilities, particularly for under-resolved flows.

Entropy stability offers one promising avenue for constructing robust, high-order CFD methods.  This is not a new idea.  For example, over thirty years ago, Hughes~\etal~\cite{Hughes1986new} presented a finite-element discretization of the compressible Navier-Stokes equations that satisfied the second-law of thermodynamics.  And in 1999, Barth~\cite{Barth1999numerical} extended this work to cover Galerkin-least-squares stabilizations and discontinuous Galerkin (DG) schemes.  However, these early examples make the assumption that the integrations present in the finite-element semi-linear forms are exact.  Exact integration is not possible, in general, for the Euler and Navier-Stokes equations, so these schemes must rely on potentially costly ``over-integration'' in practice.  Even then, the discrete schemes are not provably stable and may fail.

In light of the above, there has been growing interest in semi-discrete and fully-discrete high-order schemes that are provably entropy stable.  Fisher's thesis~\cite{Fisher2012thesis} represented a seminal contribution in this direction  --- see also \cite{Fisher2013discretely} and \cite{Fisher2013high}.  He showed that summation-by-parts (SBP) finite difference methods could be combined with entropy-conservative flux functions~\cite{Tadmor1987entropy,Tadmor2003entropy,Ismail2009affordable,Chandrashekar2015kinetic} to produce high-order entropy-stable schemes.  This was later extended to tensor-product spectral-element methods~\cite{Carpenter2014entropy,Parsani2016entropy} that also possess the summation-by-parts property~\cite{Gassner2013skew}.  Subsequently, SBP operators were generalized to simplex elements in~\cite{multiSBP} and later used to construct entropy-stable discretizations on triangular and tetrahedral grids~\cite{Chen2017entropy,Crean2018entropy}.

My objective in this paper is to extend the entropy-stable SBP-framework to continuous-Galerkin type discretizations.  Previous entropy-stable SBP discretizations have focused on discontinuous-Galerkin (DG)-type methods; even the finite-difference methods in~\cite{Fisher2012thesis}, \cite{Fisher2013discretely}, and \cite{Fisher2013high} used numerical flux functions embedded in penalty terms to couple blocks in multi-block grids.  My motivation for considering continuous SBP (C-SBP) discretizations, which were first proposed in~\cite{multiSBP}, is to reduce the computational cost of the residual evaluations by eliminating the interface penalties.

Stabilization is arguably the principal challenge in adapting the entropy-stable framework to C-SBP discretizations.  It is well-known that continuous Galkerin finite-element methods produce oscillatory solutions for hyperbolic partial-differential equations (PDEs) and require stabilization; examples of stabilizations include stream-line upwind Petrov-Galerkin (SUPG)~\cite{Brooks1982streamline}, Galerkin-least-squares (GLS)~\cite{Hughes1989gls}, variational multiscale~\cite{Hughes1995multiscale}, edge stabilization~\cite{Douglas1976interior}, and local-projection stabilization (LPS)~\cite{Becker2001finite}, to name a few.

Each stabilization that has been proposed has advantages and disadvantages.  Therefore, in order to identify a suitable choice for an entropy-stable C-SBP scheme, a list of desired properties is useful.
\begin{description} 
\item[Entropy stable:] It must be possible to make the stabilization provably entropy stable, at least at the semi-discrete level.  This rules out SUPG as a potential candidate, as well as other non-symmetric stabilizations.
\item[Well conditioned:] The stabilization should have a spectral radius that is comparable to D-SBP discretizations.  For example, edge stabilization based on jumps in the gradient~\cite{Douglas1976interior,Burman2004edge,Burman2006continuous} can be made entropy stable, but numerical experiments~\cite{Crean2016investigation} indicate that it has poor conditioning, especially as the discretization order increases.  Numerical experiments (not reported here) also suggest that GLS has poor conditioning.
\item[Element local:] I would like the stabilization to have a stencil that matches the stencil of the (unstabilized) discretization.  This requirement reduces parallel communication and memory requirements for the Jacobian.  It also greatly simplifies algorithmic differentiation, since coloring can be performed at the element level.  Edge stabilization and LPS are not element local, in general, while SUPG and GLS are.
\end{description}

To the best of my knowledge, no stabilization meets all the requirements listed above, so a compromise is necessary.  The solution that I advocate here is to sacrifice optimal approximation accuracy by increasing the number of nodes necessary for a degree $p$ basis.  The additional degrees of freedom enable the creation of an entropy-stable, well-conditioned LPS to be applied at the element level.  While sacrificing optimal approximation may be unpalatable to some, I believe this is an attractive holistic solution, since diagonal-norm SBP operators also typically require more nodes than a degree $p$ basis~\cite{multiSBP,Fernandez2017simultaneous}. 

I begin below by describing the C-SBP discretization and its stabilization in the context of the constant-coefficient, linear advection equation; see Section~\ref{sec:LPS_advec}.  In Section~\ref{sec:euler}, I review the entropy conservation of (unstabilized) C-SBP discretizations in the context of the Euler equations and show how LPS can be used to create an entropy-stable stabilization.  I detail the SBP and LPS operators and their construction in Section~\ref{sec:constructSBP}.  I verify the discretizations in Section~\ref{sec:results} by presenting some numerical experiments, and I conclude with a summary and discussion in Section~\ref{sec:conclude}. 

\section{Stabilization of continuous SBP discretizations: linear advection}\label{sec:LPS_advec}

In this section, I use the constant-coefficient linear advection equation to present the key ideas behind the proposed stabilization. As discussed in the introduction, these ideas are i) sacrificing optimal-polynomial approximation to achieve a localized stabilization, and ii) using a local-projection-based stabilization.

While most readers will be interested in more useful PDEs, such as the Euler or Navier-Stokes equations, I begin with the linear-advection equation because it avoids complications that I believe would obscure an intuitive understanding of the stabilization.  It also provides a simple context to review C-SBP discretitzations.

\subsection{The unstabilized SBP discretization in one-dimension}\label{sec:1d_advec}


Consider the one-dimensional, constant-coefficient advection equation on a unit
periodic domain $\Omega = [0,1]$:
\begin{equation}\label{eq:1d_advec}
  \begin{alignedat}{2}
    \frac{\partial \ufnc}{\partial t} + \lambda \frac{\partial \ufnc}{\partial x} &= \ffnc,
    &\qquad &\forall\; x \in [0,1], \\
    \ufnc(0,t) &= \ufnc(1,t), &\qquad &\forall\; t \geq 0, \\
    \ufnc(x,0) &= \ufnc_0(x), &\qquad &\forall\; x \in [0,1],
  \end{alignedat}
\end{equation}
where $\lambda \in \mathbb{R}$ is the advection velocity and $\ffnc \in L^2(\Omega)$
is a source.

Let the domain $\Omega = [0,1]$ be divided into the mesh
\begin{equation*}
  \Tmesh \equiv \left\{ \Omega_\kappa \right\}_{\kappa=1}^{K} =  \left\{ [h(\kappa-1),h\kappa] \right\}_{\kappa=1}^{K}
\end{equation*}
of $K$ elements of uniform size $h = 1/K$.  The SBP discretization that we
consider in this section is a spectral-collocation method using Legendre-Gauss-Lobbato (LGL)
nodes, where the solution is stored at $\nk$ LGL quadrature nodes on each
element $\kappa$.  See Figure \ref{fig:1d_mesh} for an example of the mesh and
the quadrature points.

\begin{figure}[tbp]
  \begin{center}
    \includegraphics[width=\textwidth]{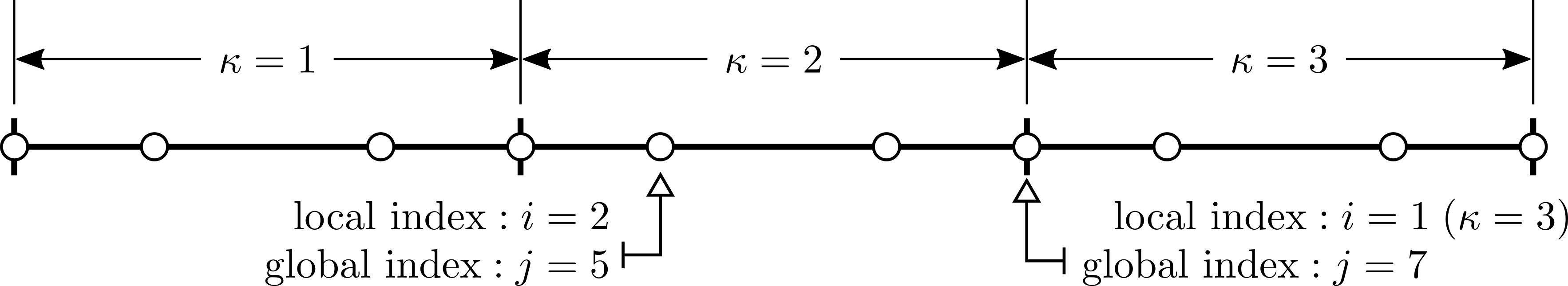}
    \caption[]{Example one-dimensional mesh illustrating node
      indexing.\label{fig:1d_mesh}}
  \end{center}
\end{figure}

\begin{remark}
  A conventional spectral-collocation method based on LGL nodes uses $\nk = p+1$
  collocation points on each element, where $p$ is the degree of the polynomial
  basis.  In this work we use $\nk > p+1$, which is more like a finite-difference
  operator.  We will motivate this decision later in Section~\ref{sec:sacrifice}.
\end{remark}

The global solution is denoted by the vector $\uh \in \mathbb{R}^{n}$, where the
number of degrees of freedom in this example is $n = (\nk-1)K$; for those more familiar with discontinuous SBP discretizations, the solution here is \emph{not} multivalued at the element interfaces.

In order to manipulate the global solution at the element-level, we need to define
restriction and prolongation operators.  To this end, consider a global node
with index $j \in \{1,2,\ldots,n\}$ that coincides with the local index $i \in
\{1,2,\ldots,\nk\}$ on element $\kappa$; again, see Figure~\ref{fig:1d_mesh}.
Then the $(i,j)$th entry of the restriction matrix is unity:
$\left[\Rs_\kappa\right]_{ij} = 1$.  For example, assuming the nodes are
ordered sequentially, both on each element and globally, the restriction operator for
element $\kappa=2$ in Figure~\ref{fig:1d_mesh} is
\begin{equation*}
  \Rs_{2} = \begin{bmatrix}
    0 & 0 & 0 & 1 & 0 & 0 & 0 & 0 & 0 \\
    0 & 0 & 0 & 0 & 1 & 0 & 0 & 0 & 0 \\
    0 & 0 & 0 & 0 & 0 & 1 & 0 & 0 & 0 \\
    0 & 0 & 0 & 0 & 0 & 0 & 1 & 0 & 0 
    \end{bmatrix}.
\end{equation*}
The prolongation operator on element $\kappa$ is simply the transpose of
$\Rs_\kappa$.

Next, I introduce a degree $p$ diagonal-norm SBP operator $\Dxk = \Hk^{-1} \Qxk$ on the $\nk$ LGL nodes in the reference space $\xi \in
[-1,1]$; see~\cite{Gassner2013} for the case $p = \nk-1$ and~\cite{DCDRF2014} for
the more general case.  Briefly, $\Dxk$ is a finite-difference operator that exactly differentiates degree $p$ polynomials at the nodes.  Furthermore, $\Hk$ is a diagonal matrix with positive entries along its diagonal, and the symmetric part of $\Qxk$ satisfies $\Qxk + \Qxk^T = \mydiag(-1,0,0,\ldots,1) \equiv \Exk$.  I will review the multidimensional SBP definition in Section~\ref{sec:constructSBP}.

Using the restriction operators and the matrices $\Hk$ and $\Qxk$, one can define an SBP operator that acts on the
global degrees of freedom:
\begin{gather}
  \Dx = \H^{-1} \Qx \notag \\
  \intertext{where}  
  \Qx \equiv \sum_{\kappa=1}^{K} \Rs_{\kappa}^T \Qxk \Rs_{\kappa},
  \qquad\text{and}\qquad
  \H  \equiv \frac{h}{2} \sum_{\kappa=1}^{K} \Rs_{\kappa}^T \Hk \Rs_{\kappa}.
  \label{eq:Qx_and_H}
\end{gather}
The fact that $\Dx$ defines a degree $p$ SBP operator at the nodes on $\Omega =
[0,1]$ was established in~\cite{multiSBP}.

\begin{remark}
  For the case $p = \nk-1$, $\Qx$ and $\H$ define the stiffness and lumped-mass matrices
  for a spectral collocation scheme.
\end{remark}

\begin{remark}
  For the periodic case under consideration, the symmetric part of $\Qx$ is zero:
  \begin{equation*}
  \Ex = \Qx + \Qx^T = \sum_{\kappa=1}^{K} \Rs_{\kappa}^T (\Qxk + \Qxk^T) \Rs_{\kappa} 
    = \sum_{\kappa=1}^{K} \Rs_{\kappa}^T \Exk \Rs_{\kappa} = \mat{0},
  \end{equation*}
  since the $-1$ and $1$ values in $\Exk$ from adjacent elements cancel at the common node.  This will be an important property that we will also need for entropy conservation in the multidimensional case in Section~\ref{sec:euler}.
\end{remark}

\subsubsection{Strong-form discretization}

Using the global operator $\Dx$, the SBP semi-discretization of
\eqref{eq:1d_advec} is given by
\begin{equation}\label{eq:1d_advec_sbp}
  \frac{d \uh}{dt} + \lambda \Dx \uh = \fh,
\end{equation}
where $\fh \in \mathbb{R}^{n}$ is the evaluation of $\ffnc$ at the nodes.  The
truncation error, energy stability, and conservative nature of
\eqref{eq:1d_advec_sbp} follow from the properties of the SBP operator $\Dx$; see, for example,
the reviews~\cite{Svard2014} and \cite{Fernandez2014}.

\subsubsection{Weak-form discretization}

I will now show that \eqref{eq:1d_advec_sbp} is also a discretization of the weak
formulation of the constant-coefficient linear advection equation.  Let
$H^{1}(\Omega)$ denote the Hilbert space of periodic functions on $\Omega =
[0,1]$ with bounded derivatives.  Then the weak formulation
of~\eqref{eq:1d_advec} is obtained by multiplying the PDE by an arbitrary $\vfnc \in
H^{1}(\Omega)$, integrating over the domain, and applying integration by parts;
that is, find $\ufnc \in H^{1}(\Omega)$ such that
\begin{equation}\label{eq:1d_advec_weak}
  \int_{\Omega} \vfnc \frac{\partial \ufnc}{\partial t} \,dx - \int_{\Omega} \frac{\partial
    \vfnc}{\partial x} \lambda \ufnc \, dx = \int_{\Omega} \vfnc \ffnc \, dx,
  \qquad \forall\; \vfnc \in H^{1}(\Omega).
\end{equation}

To mimic \eqref{eq:1d_advec_weak} in the discrete case, let $\vh \in \mathbb{R}^{n}$ denote the value of a test function $\vfnc \in
H^{1}(\Omega)$ evaluated at the nodes of $\Tmesh$.  Then, left-multiplying
\eqref{eq:1d_advec_sbp} by $\vh^T \H$ we find
\begin{equation}
  \vh^T \H \frac{d \uh}{dt} - (\Dx \vh)^T \H (\lambda \uh) = \vh^T \H \fh, \qquad \forall \vh \in \mathbb{R}^{n},
  \label{eq:1d_advec_weak_sbp}
\end{equation}
where I have used the SBP property $\H \Dx = -\Dx^T \H + \H \Ex$, and the
fact that $\Ex = \mat{0}$ for the periodic problem under consideration.

There is an obvious structural similarity between \eqref{eq:1d_advec_weak} and \eqref{eq:1d_advec_weak_sbp}; however, the similarity between the two weak forms is more than qualitative.   Since $\Dx$ is a diagonal-norm SBP first-derivative operator, it follows that $\H$ and the nodes define a degree $q \geq 2p-1$ quadrature rule, and $\Qx^T$ approximates the weak
derivative~\cite{Hicken2013quad,multiSBP}:
\begin{align*}
  \vh^T \H \uh &= \int_{\Omega} \vfnc \ufnc \,dx + \text{O}(h^{2p}) \\
  (\Dx \vh)^T \H \uh &= \vh^T \Qx^T \uh = \int_{\Omega} \frac{\partial \vfnc}{\partial x} \ufnc \, dx + \text{O}(h^{2p}).
\end{align*}
Thus, beyond mimicking the structure of \eqref{eq:1d_advec_weak}, each term in \eqref{eq:1d_advec_weak_sbp} is actually a high-order approximation of the corresponding term in the continuous equation.

\begin{remark}
While many discretizations may approximate \eqref{eq:1d_advec_weak} to high-order, the strength of SBP discretizations is that they also mimic integration by parts exactly; that is, few schemes also satisfy the SBP property, $\Qx + \Qx^T = \Ex$, which is valuable in proving stability. 
\end{remark}

\subsubsection{Element-level discretization}

The global discretizations \eqref{eq:1d_advec_sbp} and \eqref{eq:1d_advec_weak_sbp} are concise and useful for high-level analyses; however, in practice, a C-SBP discretization is likely to be constructed similar to a finite-element method, \ie, at the element level.  Furthermore, this element-level perspective can be helpful for definitions, as we have already seen for $\Dx$ and $\H$, and for some detailed analyses.

In the case of the constant-coefficient advection equation, the element-level discretization is obtained by inserting the definitions \eqref{eq:Qx_and_H} into the weak form \eqref{eq:1d_advec_weak_sbp}:
\begin{multline*}
\sum_{\kappa=1}^{K} \vk^T \Hk \frac{d \uk}{dt} - \sum_{\kappa=1}^{K} (\Dxk \vk)^T \Hk (\lambda \uk) = \sum_{\kappa=1}^{K} \vk^T \Hk \fk,
\\ \forall \vk \in \mathbb{R}^{\nk}, \kappa = 1,2,\ldots,K,
\end{multline*}
where we have introduced the restricted solution, test function, and source:
\begin{equation*}
\uk \equiv \Rsk \uh, \qquad 
\vk \equiv \Rsk \vh, \qquad \text{and},\qquad
\fk \equiv \Rsk \fh,
\end{equation*}
respectively.

\subsection{Stabilization in one dimension}\label{sec:1d_stab}

The linear advection equation conserves ``energy'' when $\ffnc=0$ in the periodic IBVP~\eqref{eq:1d_advec}.  That is, the time-rate-of-change of $\|\ufnc\|_{\Omega}^{2}$ is zero.  Likewise, the SBP semi-discretization~\eqref{eq:1d_advec_sbp} conserves the discrete ``energy'' $\|\uh\|_{\H}^2 \equiv \uh^T \H \uh$ when $\fh = \bm{0}$.  This is easy to see by replacing $\vh$ with $\uh$ and setting $\fh = \bm{0}$ in the equivalent weak form~\eqref{eq:1d_advec_weak_sbp}:
\begin{gather*}
 \uh^T \H \frac{d \uh}{dt} - (\Dx \uh)^T \H (\lambda \uh) = 0, \\
 \Rightarrow\qquad
 \frac{d}{dt} \uh^T \H \uh = \lambda\left(\uh^T \Qx^T \uh + \uh^T \Qx \uh\right) = \lambda \uh^T \Ex \uh = 0,
\end{gather*}
where, again, $\Ex = \mat{0}$ due to periodicity.  Thus we have $\frac{d}{dt} \| \uh\|_{\H}^2 = 0$, showing that the energy is conserved.

The SBP semi-discretization \eqref{eq:1d_advec_sbp} is non-dissipative.  This is a property shared by all semi-discretizations of the linear-advection equation that use a skew-symmetric spatial operator.  This does not imply that skew-symmetric discretizations, including \eqref{eq:1d_advec_sbp}, produce error-free solutions.  Dispersion errors are still present, as are aliasing errors for non-constant coefficient problems.  These errors, which are often characterized by high-frequency modes, can lead to suboptimal convergence rates, such as those observed in \cite{multiSBP}.

\ignore{dispersion errors are still present.  Indeed, since the highest frequency modes generally have the largest dispersion errors, the discrete solution will eventually become contaminated with even-odd oscillatory modes.  These high-frequency modes can lead to suboptimal convergence rates; see, for example, the CSBP results in \cite{multiSBP}.
}

This problem with skew-symmetric, or nearly skew-symmetric, discretizations of advection-dominated PDEs is well known.  It is addressed by so-called stabilization methods, such as SUPG~\cite{Brooks1982streamline}, in the case of finite-element methods, or artificial dissipation based on undivided-differences~\cite{Jameson1981numerical,Pulliam1986artificial,Mattsson2004stable}, in the case of traditional finite-difference methods.

\begin{remark}  The term stabilization is retained for historical reasons, but it should not be confused with energy or entropy stability.  As shown at the beginning of this section, a skew-symmetric discretization like \eqref{eq:1d_advec_sbp} can be \emph{energy stable} in the sense that its solution has a bounded norm; this same solution can possess spurious, high-frequency modes that need to be ``stabilized.''
\end{remark} 

\subsubsection{Derivative-based stabilization in one dimension}\label{sec:derivbased}

Generically speaking, stabilization methods introduce dissipative terms that target undesired, high-frequency modes.  A common construction for these dissipative terms is based on the inner product between (sufficiently) high-order derivatives of the trial and test functions~\cite{Hesthaven2008nodal,Mattsson2004stable,Ranocha2018stability,Penner2018high}.  For a single element on the reference domain $\Omega_\kappa = [-1,1]$, the continuous derivative-based dissipation operator, and its corresponding SBP discretization, take the form
\begin{equation} 
  \int_{-1}^{1} \frac{\partial^s \vfnc}{\partial \xi^s} \fnc{A}(\xi) \frac{\partial^s \ufnc}{\partial \xi^s} \,d\xi
  \approx \vk^T \underbrace{\left(\Dxk^s\right)^T \Hk \mat{A}_{\kappa} \Dxk^s}_{\displaystyle \equiv \Disk} \uk,
  \label{eq:disk}
\end{equation}
where $\Dxk^{s}$ is either a direct discretization of $\partial^s/\partial \xi^s$ or the product of $s$ first derivative operators $\Dxk$.  I will discuss the choice of $s$ shortly.  The scaling function $\fnc{A}(\xi) > 0$ is used to ensure the stabilization is dimensionally consistent and its magnitude is commensurate with the wave speed.  The matrix $\mat{A}_{\kappa} = \mydiag(\fnc{A}(\xi_1),\fnc{A}(\xi_2),\ldots,\fnc{A}(\xi_{\nk}))$ is a diagonal matrix whose entries are the function $\fnc{A}(\xi)$ evaluated at the quadrature nodes of element $\kappa$.

The dissipation operator $\Disk$ is symmetric positive semi-definite by construction, so $\uk^T \Disk \uk \geq 0$.  Consequently, adding this dissipation to each element of the weak-form C-SBP discretization (on the left-hand side) produces an energy-stable discretization of the constant-coefficient advection equation.  That is, the time rate-of-change of the solution norm is non-positive: $\frac{d}{dt} ( \uh^T \H \uh ) \leq 0$.

The order of the derivatives, $s$, that appears in $\Disk$ is chosen based on accuracy considerations.  Let $\mathbb{P}^{p}[-1,1]$ denote the space of polynomials of degree $p$ on the reference domain $[-1,1]$.  For a given polynomial $\fnc{P} \in \mathbb{P}^{p}[-1,1]$, I use $\bm{p}$ to denote $\fnc{P}$ evaluated at the quadrature nodes: $(\bm{p})_i = \fnc{P}(\xi_i), \forall i = 1,2,\ldots,\nk$.  To ensure optimal $p+1$ convergence rates, I select $s = p+1$ so that $\Dxk^s \bm{p} = 0$ for all polynomials $\fnc{P} \in \mathbb{P}^{p}[-1,1]$.  The choice $s=p+1$ preserves the accuracy of the SBP first-derivative operator when the dissipation is added to $\Dxk$:
\begin{align*}
\left( \Dxk + \Hk^{-1} \Disk \right) \bm{p} &= \Dxk \bm{p} + \Hk^{-1} \left(\Dxk^s\right)^T \Hk \mat{A}_{\kappa} \underbrace{\Dxk^s \bm{p}}_{= \bm{0}} \\
&= \Dxk \bm{p} 
= \begin{bmatrix}
\frac{\partial \fnc{P}}{\partial \xi}(\xi_1), & \frac{\partial \fnc{P}}{\partial \xi}(\xi_2), & \ldots &
\frac{\partial \fnc{P}}{\partial \xi}(\xi_q) \end{bmatrix}^{T},
\end{align*}
for all $\fnc{P} \in \mathbb{P}^{p}$.  Note that stability requires $\Hk^{-1}$ to left multiply $\Disk$ in the first line --- it disappears when the strong form is contracted with $\vk^T \Hk$, and it does not impact accuracy.

\subsubsection{Trading element-local stabilization for optimal approximation}\label{sec:sacrifice}

One consequence of $s = p+1$ is that we need at least $\nk = s+1 = p+2$ quadrature points on an element to construct an operator such that $\Dxk^s \bm{p} = \bm{0}$ for all $\fnc{P} \in \mathbb{P}^{p}[-1,1]$; if $\nk = p+1$ we will obtain the trivial operator $\Dxk^s = \mat{0}$.  The implication is that this form of \emph{element-local dissipation} is not ``optimal'' for C-SBP operators, since more than $p+1$ nodes are required for a degree $p$ operator; however, this is a narrow definition of optimal inherited from approximation theory.  For example, classical finite-difference discretizations on uniform grids are not ``optimal'' according to this definition, yet few would argue that they are inefficient in practice.  Furthermore, if efficiency --- accuracy per unit cost --- is a primary objective for an element-based discretization, then our results suggest that one should consider non-optimal operators.

\begin{remark}
In addition to classical finite-difference methods, finite-element methods that use so-called bubble functions~\cite{Baiocchi1993virtual} also have more degrees of freedom than necessary for a target polynomial degree $p$ and, thus, enable element-local stabilizations; however, to the best of my knowledge, the existing stabilizations are not entropy stable and high-order.
\end{remark}

\subsubsection{Projection-based stabilization in one dimension}

In this section, I describe a projection-based dissipation that is a SBP generalization of local-projection stabilization, or LPS~\cite{Becker2001finite}.  The form of LPS that I consider amounts to penalizing, or damping, polynomial degrees higher than $p$ in the solution; however, it has traditionally been used in incompressible flows to damp high-degree modes in the gradient~\cite{Braack2009finite}.

In order to define LPS, I will use the following continuous projection operator on the reference element: for a given function $\ufnc \in L^2[-1,1]$, find a polynomial $\tilde{\ufnc} \in \poly{p}[-1,1]$ such that
\begin{equation}
  \int_{\xi=-1}^{1} \tilde{\vfnc}(\ufnc - \tilde{\ufnc}) \, d\xi = 0, \qquad \forall\, \tilde{\vfnc} \in \poly{p}[-1,1].
  \label{eq:L2proj}
\end{equation}
This is a simple $L^2$ projection that removes any ``high-frequency'' content in $\ufnc$ that cannot be represented in the polynomial space $\poly{p}[-1,1]$.

There are several ways to obtain an appropriate SBP discretization of the projection \eqref{eq:L2proj}.  The approach I describe here is well-suited for element-based SBP operators defined on LGL nodes with degree $2\nk-3$ cubatures; I will discuss some alternative constructions in Section~\ref{sec:construct_lps}.

First, we represent the projected quantity at the nodes using the orthogonal Legendre polynomials\footnote{Assume the Legendre polynomials are normalized to have unit $L^2$ norm} up to degree $p$, which I denote here by $\{\fnc{L}_j(\xi)\}_{j=0}^{p}$:
\begin{equation*}
\utildek = \mat{L} \bm{y},
\end{equation*}
where $(\utildek)_i = \tilde{\ufnc}(\xi_i)$, and $\mat{L}_{ij} = \fnc{L}_j(\xi_i)$ is a generalized, $\nk \times(p+1)$ Vandermonde matrix.  The vector $\bm{y} \in \mathbb{R}^{p+1}$ holds the to-be-determined basis coefficients.

Next, we can also use the Legendre polynomials for the test functions $\tilde{\vfnc}$, because the $\fnc{L}_j$ form a basis for $\poly{p}[-1,1]$.  Finally, we use the SBP matrix $\Hk$ to perform integration, so that \eqref{eq:L2proj} discretizes as
\begin{equation*}
\mat{L}^T\Hk \left(\uk - \mat{L} \bm{y}\right) = \bm{0} \qquad \Rightarrow \qquad
\bm{y} = \mat{L}^T \Hk \uk,
\end{equation*}
where $\mat{L}^T \Hk \mat{L} = \mat{I} \in \mathbb{R}^{(p+1)\times(p+1)}$ follows from the orthonormality of the Legendre polynomials and the accuracy of the quadrature.  To clarify this last point, for $\nk > p+1$ nodes an LGL quadrature is exact for degree $2\nk - 3 > 2p - 1$ polynomials, and the Legendre polynomials in $\mat{L}$ are at most degree $p$.

We can now isolate the high-order modes in an arbitrary $\uk$ by subtracting the projected nodal values of $\utildek$:
\begin{equation*}
\uk - \utildek = \uk - \mat{L} \mat{L}^T \Hk \uk = \Prjk \uk,
\end{equation*}
where $\Prjk = \mat{I} - \mat{L} \mat{L}^T \Hk$.  If we wish to penalize the high-order modes, \ie, $\Prjk \uk$, in a symmetric manner similar to the derivative-based dissipation \eqref{eq:disk}, then we arrive at the following local-projection stabilization on element $\kappa$:
\begin{equation} 
  \int_{\xi=-1}^{1} (\vfnc - \tilde{\vfnc}) \fnc{A}(\xi) (\ufnc - \tilde{\ufnc}) \,d\xi
  \approx \vk^T \underbrace{\left(\Prjk \right)^T \Hk \mat{A}_{\kappa} \Prjk}_{\ds \equiv \Mlps} \uk, 
  \label{eq:LPS}
\end{equation}

\subsubsection{Equivalence between derivative-based and projection-based stabilization}

The theorem below shows that, \emph{for one-dimensional discretizations}, the dissipation operators $\Mlps$ and $\Disk$ are closely related. 

\begin{theorem}\label{thm:equiv}
Consider the derivative-based and projection-based dissipation operators, $\Disk$ and $\Mlps$, on the reference element with $\nk$
 Legendre-Gauss-Lobatto nodes, where $\nk = p+2 \geq 2$.  Then $\Disk = \alpha \Mlps$, for some constant $\alpha > 0$.
\end{theorem}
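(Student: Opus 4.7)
The plan is to show that both $\Disk$ and $\Mlps$ are symmetric positive-semidefinite matrices of rank exactly $1$, sharing the same range, and therefore must coincide up to a positive scalar. The setting $\nk = p+2$ is precisely what forces the rank to be $1$: the nullspaces contain a $(p+1)$-dimensional subspace, leaving only a one-dimensional complement.

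First I would identify the nullspaces. For $\Mlps = \Prjk^T \Hk \mat{A}_\kappa \Prjk$, since $\Hk \mat{A}_\kappa$ is diagonal and positive definite, $\Mlps \bm{v} = 0$ iff $\Prjk \bm{v} = 0$. By the construction $\Prjk = \mat{I} - \mat{L}\mat{L}^T \Hk$ and the identity $\mat{L}^T\Hk\mat{L} = \mat{I}$, the operator $\Prjk$ is an oblique projector whose kernel is exactly the column span of $\mat{L}$, i.e., the space of nodal values of polynomials in $\poly{p}[-1,1]$. For $\Disk = (\Dxk^s)^T \Hk \mat{A}_\kappa \Dxk^s$ with $s = p+1$, the same argument gives $\ker \Disk = \ker \Dxk^s$, and this kernel contains $\poly{p}[-1,1]$ by the accuracy-preserving property already established in Section~\ref{sec:derivbased}.

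Next I would pin down the ranks. The space of nodal values of $\poly{p}[-1,1]$ has dimension $p+1 = \nk-1$, so both $\Prjk$ and $\Dxk^s$ have rank at most $1$, and hence so do $\Mlps$ and $\Disk$. Non-triviality of $\Mlps$ follows because $\Prjk$ is a nonzero idempotent (e.g., $\Prjk$ acts as the identity on any vector $\Hk$-orthogonal to $\poly{p}$, a one-dimensional space). The harder — and I expect this to be the main obstacle — is showing $\Disk \neq \bm{0}$. Here I would exploit the fact that on LGL nodes with $\nk = p+2$ points, the collocation operator $\Dxk$ actually differentiates polynomials exactly up to degree $\nk-1 = p+1$, one degree higher than the stated SBP accuracy $p$. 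Applying $\Dxk^{p+1}$ to the nodal values of the Legendre polynomial $\fnc{L}_{p+1}$ therefore returns the nodal values of $\fnc{L}_{p+1}^{(p+1)}$, a nonzero constant, confirming $\Dxk^s \neq \bm{0}$ and hence $\Disk \neq \bm{0}$.

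Finally I would conclude. Both matrices are symmetric PSD with kernel equal to $\poly{p}[-1,1]$, so their ranges coincide with the one-dimensional Euclidean orthogonal complement of $\poly{p}[-1,1]$. A rank-$1$ symmetric PSD matrix whose range is $\mathrm{span}(\bm{u})$ must have the form $c\,\bm{u}\bm{u}^T$ with $c > 0$, so we can write $\Disk = c_1 \bm{u}\bm{u}^T$ and $\Mlps = c_2 \bm{u}\bm{u}^T$, giving $\alpha = c_1/c_2 > 0$. If desired, an explicit formula for $\alpha$ follows by evaluating both quadratic forms on any single vector outside $\poly{p}[-1,1]$, the most convenient choice being $\bm{\ell}_{p+1}$, the nodal values of $\fnc{L}_{p+1}$: then $\alpha = \bm{\ell}_{p+1}^T \Disk \bm{\ell}_{p+1} / \bm{\ell}_{p+1}^T \Mlps \bm{\ell}_{p+1}$.
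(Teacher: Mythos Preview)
Your proposal is correct and follows essentially the same route as the paper: both arguments reduce to observing that $\Disk$ and $\Mlps$ are nontrivial, symmetric, positive-semidefinite matrices whose nullspaces contain the $(p+1)$-dimensional column span of $\mat{L}$, forcing each to be rank one with range equal to the one-dimensional orthogonal complement of that span, and hence equal up to a positive scalar. Your treatment is in fact more careful than the paper's on the nontriviality of $\Disk$, which the paper simply asserts; your argument there hinges on $\Dxk$ being the standard LGL collocation derivative (degree $p+1$ exact on $p+2$ nodes), which is the natural and indeed unique diagonal-norm SBP operator in this setting, so the step is sound.
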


\begin{proof}
 $\Disk$ and $\Mlps$ are non-trivial, symmetric matrices that satisfy $\Disk \mat{L} = \Mlps \mat{L} = \mat{0}$, where $\mat{L} \in \mathbb{R}^{\nk \times (p+1)}$ holds the Legendre polynomials of degree $p$ evaluated at the nodes; that is, both operators annihilate polynomials of degree $p$ or less.  Therefore, since $\mat{L}$ has full column rank with one more row than column, it follows that both $\Disk$ and $\Mlps$ are symmetric rank-one operators.  Thus, we can express both operators as $\Disk = \lambda^{\mat{D}} \bm{m} \bm{m}^T$ and $\Mlps = \lambda^{\mat{P}} \bm{m} \bm{m}^T$, where $\bm{m} \in \mathbb{R}^{q}$ is the unique (up to sign) eigenvector that satisfies $\| \bm{m} \| = 1$ and $\mat{m}^T\mat{L} = 0$.   The result follows with $\alpha =  \lambda^{\mat{D}}/\lambda^{\mat{P}}$.
\end{proof}

Theorem~\ref{thm:equiv} shows that $\Disk$ and $\Mlps$ are equivalent up to a multiplicative constant when using $\nk = p+2$ LGL nodes.  To make this equivalence more concrete, Table~\ref{tab:diss1d} lists the rank-one decompositions, as described in the proof of Theorem~\ref{thm:equiv}, for degrees $p=0,1,2,$ and 3.  For simplicity, the table assumes $\mat{A}_{\kappa} = \mat{I}$.

We see from Table~\ref{tab:diss1d} that the equivalence factor $\alpha$ grows rapidly with $p$ and that this growth is due to $\Disk$ and not $\Mlps$.  The derivative-based dissipation operator approximates $\partial^{2s}/\partial \xi^{2s}$, so Fourier modes with frequency $p+1$ are amplified by $(p+1)^{2s} = (p+1)^{2(p+1)}$.  While the rapid growth of $\alpha$ is of no concern for one-dimension or tensor-product CSBP discretizations --- we can always scale $\Disk$ to get $\Mlps$ --- we will see that it can have significant consequences for derivative-based dissipation based on multidimensional SBP operators.

\begin{table}[t]
\begin{center}
\caption[]{Rank-one decompositions of the dissipation operators $\Disk$ and $\Mlps$ for degrees 0 to 3 when $\mat{A} = \mat{I}$.  In the one-dimensional C-SBP framework under consideration, a degree $p$ operator uses LGL elements with $\nk=p+2$ nodes. \label{tab:diss1d}}
\begin{tabular}{lcccc}
\rule{0ex}{4ex}%
\textbf{degree} & $\bm{m}^T$ & $\Disk$ & $\Mlps$ & $\alpha$ \\\hline
\rule{0ex}{5ex}$p=0$ & 
$\displaystyle \frac{1}{\sqrt{2}} \begin{bmatrix} \phm1 & -1 \end{bmatrix}$ & 
$\bm{m}\bm{m}^T$ & $\bm{m}\bm{m}^T$ & 1 \\[3ex]
$p=1$ & 
$\displaystyle \frac{1}{\sqrt{6}} \begin{bmatrix} \phm1 & -2 & \phm1 \end{bmatrix}$ & 
$2\, \bm{m} \bm{m}^T$ & $\frac{2}{3}\, \bm{m}\bm{m}^T$ & 3 \\[3ex]
$p=2$ & 
$\displaystyle \frac{1}{2\sqrt{3}}\begin{bmatrix}
\phm1 & -\sqrt{5} & \phm\sqrt{5} & -1 \end{bmatrix}$ &
$\frac{675}{2}\, \bm{m} \bm{m}^T$ & $\frac{1}{2}\, \bm{m} \bm{m}^T$ & 675  \\[3ex]
$p=3$ &
$\displaystyle \frac{1}{6\sqrt{5}} \begin{bmatrix}
\phm3 & -7 & \phm8 & -7 & \phm3 \end{bmatrix}$ &
$17\,640\, \bm{m} \bm{m}^T$ & $\frac{2}{5}\, \bm{m}\bm{m}^T$ & 44\,100
\end{tabular}
\end{center}
\end{table}

\subsection{Stabilization of SBP discretizations that use multidimensional operators}\label{sec:2d_stab}

I now consider C-SBP discretizations that use multidimensional (non-tensor product) operators and, in particular, the problem with using derivative-based stabilization with such operators.  The problem can be understood by studying the spectral properties of the dissipation operators on a single element in reference space.  Thus, I will begin by generalizing the derivative- and projection-based operators to a generic, multidimensional reference element.

In multiple dimensions, the derivative-based dissipation operator becomes a sum of one-dimensional dissipation operators.  For example, in two dimensions, the derivative-based dissipation operator is 
\begin{multline*} 
  \int_{\Omega_\kappa} \left[ \frac{\partial^s \vfnc}{\partial \xi^s} \fnc{A}_{\xi}(\xi,\eta) \frac{\partial^s \ufnc}{\partial \xi^s} 
  + \frac{\partial^s \vfnc}{\partial \eta^s} \fnc{A}_{\eta}(\xi,\eta) \frac{\partial^s \ufnc}{\partial \eta^s} \right]
  \,d\Omega \\[2ex]
  \approx \vk^T \underbrace{\left[ \left(\Dxi^s\right)^T \Hk \mat{A}_{\xi} \Dxi^s + \left(\Deta^s\right)^T\Hk \mat{A}_{\eta} \Deta^s\right]}_{\ds \equiv \Disk} \uk,
\end{multline*}
where $\Dxi^s \approx \partial^s/\partial \xi^s$ and $\Deta^s \approx \partial^s/\partial \eta^s$.  As in the one-dimensional case, $\fnc{A}_{\xi}(\xi,\eta) > 0$ and $\fnc{A}_{\eta}(\xi,\eta) > 0$ are positive functions, and the diagonal matrices $\mat{A}_\xi$ and $\mat{A}_{\eta}$ hold the values of $\fnc{A}_\xi$ and $\fnc{A}_{\eta}$, respectively, evaluated at the nodes of the element.  Furthermore, as explained in Section~\ref{sec:derivbased}, we take $s=p+1$ when using degree $p$ operators.

The projection-based dissipation operator is essentially unchanged in multiple dimensions.  Indeed, in two dimensions, the only noticeable difference is the domain of integration in the continuous operator:
\begin{equation*} 
  \int_{\Omega_\kappa} (\vfnc - \tilde{\vfnc}) \fnc{A}(\xi,\eta) (\ufnc - \tilde{\ufnc}) \,d\xi d\eta
  \approx \vk^T \underbrace{\left(\Prjk \right)^T \Hk \mat{A}_{\kappa} \Prjk}_{\ds \equiv \Mlps} \uk,
\end{equation*}
where $\tilde{\ufnc} \in \poly{p}(\Omega_\kappa)$ is the $L^2$ projection of $\ufnc \in L^2(\Omega_\kappa)$ onto the space of (total) degree $p$ polynomials and is the solution to
\begin{equation*}
  \int_{\Omega_\kappa} \tilde{\vfnc}(\ufnc - \tilde{\ufnc}) \, d\Omega = 0, \qquad \forall\, \tilde{\vfnc} \in \poly{p}(\Omega_\kappa).
\end{equation*}
The discrete projection operator, $\Prjk$, is also defined as it was in one dimension: $\Prjk = \mat{I} - \mat{L} \mat{L}^T \Hk$.  Here, the entries in $\mat{L}$ correspond to the nodal values of appropriate orthogonal polynomials for the reference element under consideration.  I provide additional details on the construction of $\Prjk$ and $\Mlps$ in Section~\ref{sec:constructSBP}.

Unlike the one-dimensional case, $\Disk$ and $\Mlps$ are not scalar multiples of one another, in general.  To illustrate this, Figure \ref{fig:diss_spect} plots the eigenvalues of $\Hk^{-1}\Disk$ and $\Hk^{-1}\Mlps$ for SBP operators defined on the standard reference triangle.  The specific operators correspond to the degree $p=1$ and $p=4$ operators defined later in Section~\ref{sec:constructSBP}, but the qualitative trends in Figure~\ref{fig:diss_spect} hold more generally.  The dissipation operators are unscaled for simplicity, that is $\mat{A}_{\xi} = \mat{A}_{\eta} = \mat{A}_{\kappa} = \mat{I}$.

\begin{remark}
The eigenvalues of $\Hk^{-1}\Disk$ and $\Hk^{-1}\Mlps$ are strictly real and non-negative, since $\Disk$ and $\Mlps$ are symmetric positive semi-definite matrices and $\Hk$ is positive definite.
\end{remark}

The spectra in Figures~\ref{fig:eigval_operators_deg1} and \ref{fig:eigval_operators_deg4} are clearly partitioned into zero and non-zero eigenvalues.  The zero eigenvalues correspond to non-dissipated modes: polynomials of total degree $p$ or less.  The non-zero eigenvalues correspond to dissipated modes.  Note that I have ``normalized'' the eigenvalues of the derivative-based operators such that their smallest non-zero eigenvalue is equal to one; this is done to highlight the magnitude between the smallest and largest non-zero eigenvalues.

The projection-based dissipation operators damp high-frequency modes uniformly, since their non-zero eigenvalues are all equal to one.  In contrast, the derivative-based dissipation operators are characterized by a range of non-zero eigenvalues and damp the high-frequency modes non-uniformly.

\begin{figure}[t]
  \subfigure[$p=1$ \label{fig:eigval_operators_deg1}]{%
    \includegraphics[width=0.49\textwidth]{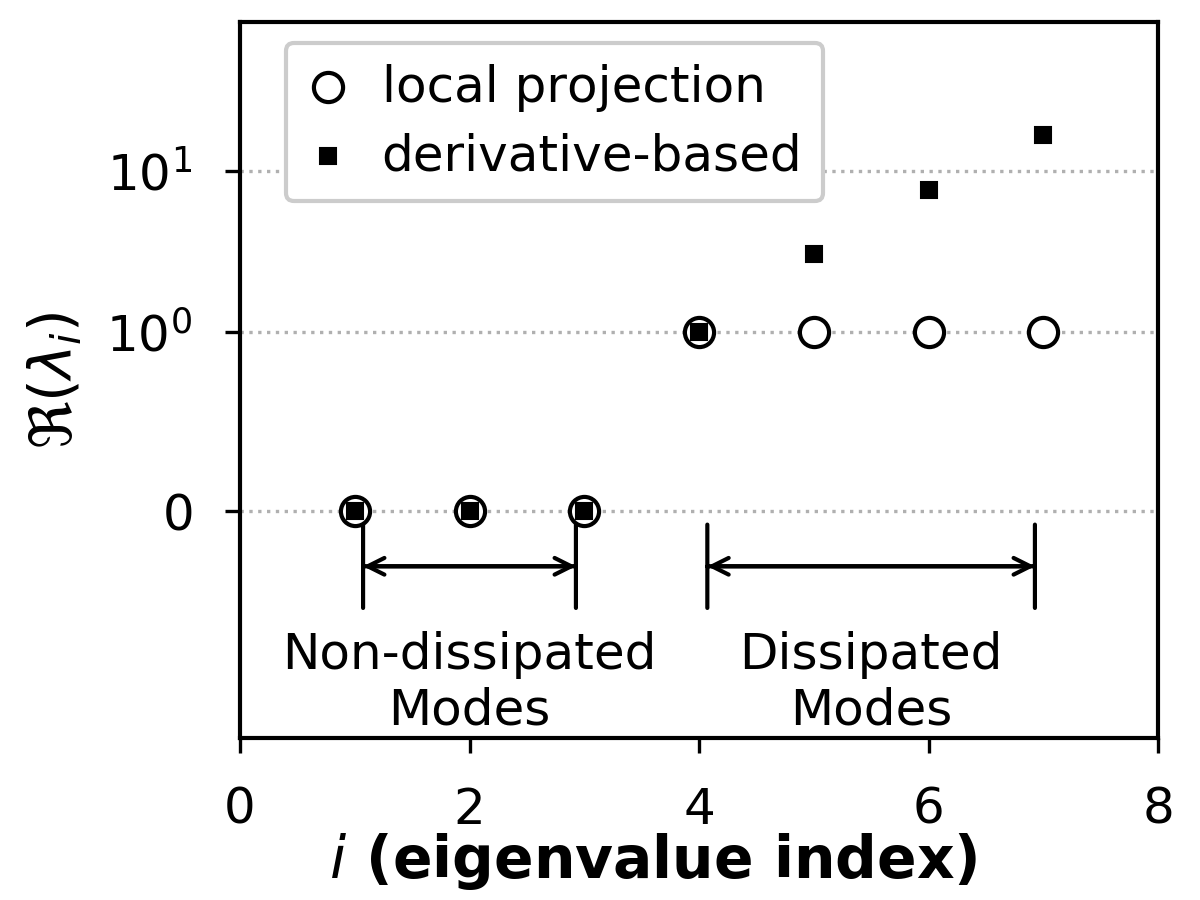}}
  \subfigure[$p=4$ \label{fig:eigval_operators_deg4}]{%
        \includegraphics[width=0.49\textwidth]{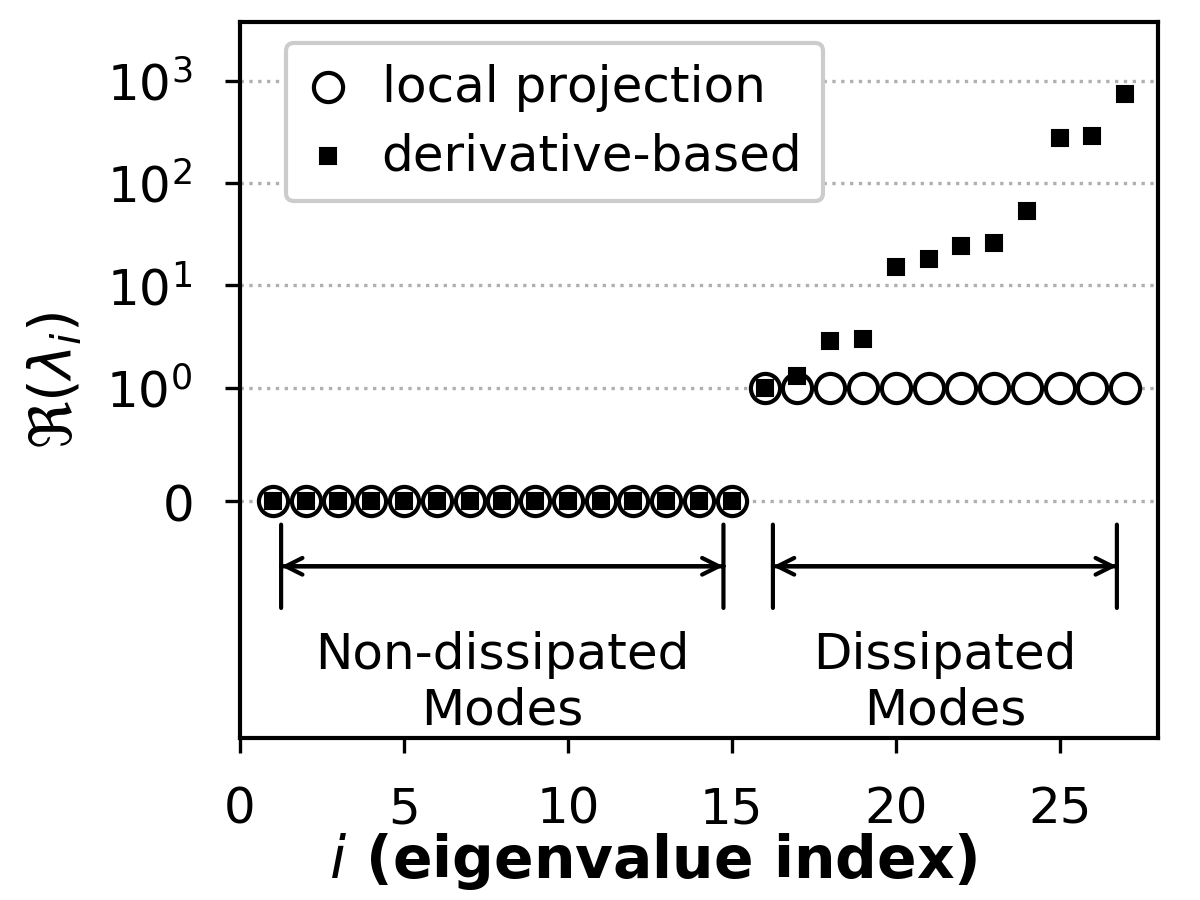}}
  \caption{Eigenvalues of the derivative-based and projection-based stabilization operators for the degrees $p=1$ and $p=4$ diagonal $\mat{E}$ SBP operators defined in Section~\ref{sec:constructSBP}.  Note that the $y$-axis scale is linear between $0$ and $1$ and logarithmic for values above $1$. \label{fig:diss_spect}}
\end{figure}

This distinction between the operators becomes especially significant as $p$ increases.  Table~\ref{tab:eigratio} lists the damping ratio between the largest, $\lambda_{\max}$, and the smallest, $\lambda_{\min}$, non-zero eigenvalues for degrees $p=0$ to $p=4$.  By the time we reach $p=4$ the eigenvalue ratio for $\Disk$ is almost three orders of magnitude.

The problem with a large damping ratio is that it requires a tradeoff between adequately damping \emph{all} the high-frequency modes and maintaining a relatively small spectral radius or condition number.  Consequently, if we use derivative-based dissipation with multi-dimensional SBP discretizations, we must choose between optimal $p+1$ convergence rates from adequate damping, on the one hand, and fast simulations from large time steps, on the other.

This conundrum is completely avoided with projection-based dissipation.  With $\Mlps$ all the high-frequency modes are equally damped, so a C-SBP discretization can achieve an optimal convergence rate while maintaining an attractive spectral radius.  We will illustrate this with the numerical results in Section~\ref{sec:results}.

\begin{table}[t]
\begin{center}
\caption[]{Ratio between the largest and smallest non-zero eigenvalues in the derivative-based and projection-based dissipation operators for the triangular-element SBP operators described in Section~\ref{sec:constructSBP}. \label{tab:eigratio}}
\begin{tabular}{llrrrrr}
\rule{0ex}{4ex}%
 & & \multicolumn{5}{c}{\textbf{SBP operator degree}} \\[1ex]\cline{3-7}
\rule{0ex}{2.5ex} & & $p=0$ & $p=1$ & $p=2$ & $p=3$ & $p=4$ \\[0.5ex]\hline
\rule{0ex}{2.5ex} $(\lambda_{\max} /\lambda_{\min})$ &
derivative & 3.00 & 16.65 & 13.28 & 91.43 & 743.93 \\
& projection & 1.00 & 1.00 & 1.00 &  1.00 & 1.00 \\\hline
\end{tabular}
\end{center}
\end{table}

\section{Stabilization of continuous SBP discretizations: Euler equations}\label{sec:euler}

In the preceding section, I described two complementary ideas regarding the stabilization of C-SBP discretizations: i) sacrificing optimal approximation order by using more nodes than necessary on a given element; and ii) constructing well-conditioned (element-local) dissipation operators based on local-projection stabilization.

Hopefully I was able to convince you that this approach to stabilization has merit, at least in the context of the constant-coefficient advection equation.  In this section, I will generalize projection-based dissipation to the Euler equations and use it to construct high-order, entropy-stable C-SBP discretizations. 

\subsection{Review of entropy conservative SBP discretizations}

As discussed in the introduction, there has been considerable work developing entropy-conservative/stable SBP discretizations in the context of discontinuous solution spaces.  I will draw heavily on this literature for the baseline entropy-conservative C-SBP discretization in order to avoid a lengthy digression into the existing theory.  Furthermore, since C-SBP schemes can use any weakly-imposed boundary conditions developed for discontinuous SBP discretizations, I will ignore boundary conditions and focus on periodic problems.

\subsubsection{The Euler PDE and entropy conservation} 

The two dimensional Euler equations on a square periodic domain, $\Omega = [0,1]^2$, are given by
\begin{equation}\label{eq:2d_euler}
  \begin{alignedat}{2}
    \frac{\partial \bfnc{U}}{\partial t} + \frac{\partial \bfnc{F}_x}{\partial x} + \frac{\partial \bfnc{F}_y}{\partial y} &= \bm{0},
    &\qquad &\forall (x,y) \in \Omega, \\
    \bfnc{U}(0,y,t) &= \bfnc{U}(1,y,t), &\qquad &\forall t \geq 0, y \in [0,1], \\
    \bfnc{U}(x,0,t) &= \bfnc{U}(x,1,t), &\qquad &\forall t \geq 0, x \in [0,1], \\
    \bfnc{U}(x,y,0) &= \bfnc{U}_{0}(x,y), &\qquad &\forall (x,y) \in \Omega,
  \end{alignedat}
\end{equation}
where $\bfnc{U} = [ \rho, \rho u, \rho v, e ]^{T}$ denotes the conservative
variables, and the Euler fluxes are
\begin{gather*}
\bfnc{F}_x(\bfnc{U}) = \begin{bmatrix} \rho u \\ \rho u^{2} + p \\ \rho u v \\ (e + p)u \end{bmatrix},
\qquad\text{and}\qquad
  \bfnc{F}_{y} = \begin{bmatrix} \rho v \\ \rho v u \\ \rho v^2 + p \\ (e + p)v \end{bmatrix}.
\end{gather*}
The pressure is defined by $p = (\gamma -1)[e - \frac{\rho}{2}(u^2 + v^2)]$, where the heat capacity ratio is assumed to be $\gamma = 1.4$.

Unlike the constant-coefficient linear advection equation, we cannot show that the $L^2$ norm of the conservative variables is bounded by contracting \eqref{eq:2d_euler} with $\bfnc{U}$; however, Dafermos~\cite{Dafermos2010hyperbolic} has shown that we can bound the solution indirectly, provided the entropy is bounded and the density and pressure remain positive.

A bound on the entropy follows from the Euler equations, since the PDE~\eqref{eq:2d_euler} implies the entropy is conserved for smooth flows (see~\cite{Tadmor2003entropy} and the references therein):
\begin{equation}\label{eq:ent}
\frac{d}{dt} \int_{\Omega} \fnc{S} \, d\Omega - \int_{\partial \Omega} \rho \fnc{S} U_n \, d\Gamma = 0,
\end{equation}
where $\fnc{S} = -\rho s /(\gamma - 1)$ is the (mathematical) entropy, $s = \ln(p/\rho^{\gamma})$ is the thermodynamic entropy, and $U_n = u n_x + v n_y$ is the normal component of the velocity.  Thus, for periodic domains like $\Omega$, the surface integral vanishes and the integral of entropy is constant with respect to time.  If the flow has shocks, then we want a weak solution such that the left-hand side of \eqref{eq:ent} is less than or equal to zero.  Thus, for both continuous and discontinuous solutions, the integral of entropy remains bounded, thereby bounding\footnote{Again, the bound requires the assumption that the density and pressure remain positive.} $\bfnc{U}$ .

\subsubsection{Entropy-conservative SBP discretization}

Remarkably, SBP operators can be used to construct high-order spatial discretizations of \eqref{eq:2d_euler} that also mimic~\eqref{eq:ent}.  Indeed, an entropy-conservative SBP discretization of \eqref{eq:2d_euler} is given by~\cite{Fisher2012thesis,Fisher2013discretely,Fisher2013high,Carpenter2014entropy,Parsani2016entropy,Chen2017entropy,Crean2018entropy}
\begin{equation}\label{eq:SBPstrong}
  \frac{d \uh}{d t}
  + \left[\barDx \circ \Fx(\uh) \right] \bm{1}
  + \left[\barDy \circ \Fy(\uh) \right] \bm{1}
  = \bm{0}.
\end{equation}
I have introduced some new notation in \eqref{eq:SBPstrong} the needs to be defined.  First, the difference operators are $\barDx \equiv \Dx \otimes \Id_4$ and $\barDy \equiv \Dy \otimes \Id_4$, where $\Dx$ and $\Dy$ are (global) SBP operators, $\Id_4$ is the $4\times 4$ identity matrix, and $\otimes$ denotes the Kronecker product; $\barDx$ and $\barDy$ simply allow us to reuse the scalar difference operators for systems of PDEs.

I also introduced the quantities $\Fx(\uh)$ and $\Fy(\uh)$ in \eqref{eq:SBPstrong}.  These are solution-dependent matrices that hold two-point, entropy-conservative flux functions; the flux functions themselves are discussed later.  The matrix $\Fx(\uh)$ is defined by
\begin{equation*}
  \Fx(\uh) \equiv 2
  \begin{bmatrix}
    \mydiag{\left[\flux{x}(\unode{1},\unode{1})\right]} &
    \hdots &
    \mydiag{\left[\flux{x}(\unode{1},\unode{n})\right]} \\
    \vdots & \ddots & \vdots \\
    \mydiag{\left[\flux{x}(\unode{n},\unode{1})\right]} &
    \hdots &
    \mydiag{\left[\flux{x}(\unode{n},\unode{n})\right]}
  \end{bmatrix},
\end{equation*}
where $\flux{x}(\unode{i},\unode{j})$ is the entropy-conservative flux in the $x$ direction based on the discrete solution at the (global) nodes $i$ and $j$, and the operator $\mydiag(\flux{x})$ indicates the diagonal matrix whose diagonal entries are given by the vector $\bm{\flux{x}}$.  The matrix $\Fy$ is defined similarly.  Finally, the operator $\circ$ denotes the Hadamard (entrywise) matrix product, defined by $(\mat{A} \circ \mat{B})_{ij} = \mat{A}_{ij} \mat{B}_{ij}$, and $\bm{1} \in \mathbb{R}^{4n}$ is a vector of ones.

\begin{remark}
We have defined $\Fx(\uh)$ as a dense block matrix where the flux $\flux{x}$ is evaluated between every pair of nodes in the mesh; however, in practice, if nodes $i$ and $j$ are not in the stencil of $\Dx$, that is, if $(\Dx)_{ij} = 0$, then $\flux{x}(\unode{i},\unode{j})$ is not evaluated and the corresponding block in $\Fx$ is zero.
\end{remark}

To obtain semi-discrete conservation of entropy and, thus, nonlinear stability, \eqref{eq:SBPstrong} must use an entropy-conservative flux function~\cite{Tadmor1987entropy}.  Although Tadmor's original entropy-conservative flux function was too expensive to use in practice, several inexpensive entropy-conservative fluxes have been proposed in the last decade~\cite{Ismail2009affordable,Chandrashekar2015kinetic}.  The availability of such fluxes has been critical to enabling high-order entropy-stable SBP discretizations.

To formally show that the discretization \eqref{eq:SBPstrong} mimics \eqref{eq:ent}, I need to introduce the entropy variables 
\begin{equation*}
  \bfnc{W}(\bfnc{U}) \equiv \frac{\partial \fnc{S}}{\partial \bfnc{U}}
  = \begin{bmatrix} \ds
    \frac{\gamma-s}{\gamma-1} - \frac{1}{2}\frac{\rho}{p}(u^2 + v^2), &
    \ds \frac{\rho u}{p}, & \ds \frac{\rho v}{p}, & \ds -\frac{\rho}{p} \end{bmatrix}^T
\end{equation*}
Furthermore, let the entries in $\wh = \wh(\uh)$ hold the entropy variables evaluated at the nodes.  

\begin{theorem}\label{thm:ent_cons}
Consider the initial-boundary-value problem \eqref{eq:2d_euler} and its discretization \eqref{eq:SBPstrong}.  Assume that the element-level SBP operators are such that the matrices $\mat{E}_{x,\kappa}$ and $\mat{E}_{y,\kappa}$ cancel at element interfaces and, consequently,
\begin{align*}
\Qx &= \sum_{\kappa=1}^{K} \Rsk^T \mat{Q}_{x,\kappa} \Rsk = \sum_{\kappa=1}^{K} \Rsk^T \mat{S}_{x,\kappa} \Rsk = \Sx \\
 \text{and}\qquad
 \Qy &= \sum_{\kappa=1}^{K} \Rsk^T \mat{Q}_{y,\kappa} \Rsk = \sum_{\kappa=1}^{K} \Rsk^T \mat{S}_{y,\kappa} \Rsk = \Sx.
\end{align*}
In addition, assume that each node at the element level is defined by only one global degree of freedom; consequently, each row in the restriction operators $\Rsk$ consists of a single, transposed standard basis vector, \eg, $\left[ \Rsk\right]_{i,:}  = \bm{e}_{j}^T$.  Then the discretization \eqref{eq:SBPstrong} conserves total entropy in the following sense:
\begin{equation*}
\frac{d}{dt} \left(\bm{1}^{T} \H \bm{s}_h \right) = 0.
\end{equation*}
\end{theorem}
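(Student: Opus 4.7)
The plan is to mimic the continuous entropy-conservation argument~\eqref{eq:ent} at the discrete level by left-multiplying the semi-discretization~\eqref{eq:SBPstrong} by $\wh^{T} \barH$, where $\barH \equiv \H \otimes \Id$ and $\Id$ denotes the $4\times 4$ identity matrix. The hypothesis that each row of $\Rsk$ consists of a single, transposed standard basis vector guarantees that $\H$ (and hence $\barH$) is diagonal, so no global degree of freedom receives contributions from more than one element-local copy. The assumption that the boundary matrices $\mat{E}_{x,\kappa}, \mat{E}_{y,\kappa}$ cancel at interfaces is precisely what delivers $\Qx = \Sx$ and $\Qy = \Sy$, so the global block operators $\barQx = \Qx \otimes \Id$ and $\barQy = \Qy \otimes \Id$ are skew-symmetric.

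For the temporal term, the chain-rule identity $\bm{w}_i^{T}(d\unode{i}/dt) = d\fnc{S}(\unode{i})/dt$, which follows from the definition $\bfnc{W} = \partial \fnc{S}/\partial \bfnc{U}$, immediately yields
\begin{equation*}
\wh^{T} \barH \frac{d\uh}{dt} \;=\; \bm{1}^{T} \H \frac{d\bm{s}_h}{dt} \;=\; \frac{d}{dt}\bigl(\bm{1}^{T}\H\bm{s}_h\bigr),
\end{equation*}
which is exactly the quantity the theorem asserts to be zero. It therefore remains to show that the two contracted flux contributions cancel, namely
\begin{equation*}
\wh^{T} \barH \bigl[\barDx \circ \Fx(\uh)\bigr] \bm{1} \;+\; \wh^{T} \barH \bigl[\barDy \circ \Fy(\uh)\bigr] \bm{1} \;=\; 0.
\end{equation*}

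I would handle the $x$ contribution explicitly, the $y$ term being identical. Working at the block level, the Hadamard product of the $(\Dx)_{ij}\Id$-scaled blocks of $\barDx$ with the diagonal blocks $\mydiag[\flux{x}(\unode{i},\unode{j})]$ of $\Fx$ gives the $i$-th block of $[\barDx \circ \Fx]\bm{1}$ as $2\sum_j (\Dx)_{ij}\, \flux{x}(\unode{i},\unode{j})$, with the factor of two carried through from the definition of $\Fx$. Left-multiplying by $\barH$ and contracting with $\wh$ then produces the double sum $2\sum_{i,j} (\Qx)_{ij}\, \bm{w}_i^{T} \flux{x}(\unode{i},\unode{j})$, using $\H\Dx = \Qx$. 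Three standard ingredients finish the argument: (i) the symmetry $\flux{x}(\unode{i},\unode{j}) = \flux{x}(\unode{j},\unode{i})$ of the two-point flux combined with the skew-symmetry of $\Qx$ allows the sum to be symmetrized as $\sum_{i,j} (\Qx)_{ij} (\bm{w}_i - \bm{w}_j)^{T} \flux{x}(\unode{i},\unode{j})$; (ii) Tadmor's shuffle condition for entropy-conservative fluxes, $(\bm{w}_i - \bm{w}_j)^{T} \flux{x}(\unode{i},\unode{j}) = \psi_x(\unode{i}) - \psi_x(\unode{j})$ with $\psi_x$ the entropy-flux potential, reduces the integrand to a difference of nodal scalars; and (iii) $\Qx \bm{1} = \bm{0}$ (because $\Dx$ reproduces constants and $\Qx = \H\Dx$) together with $\bm{1}^{T} \Qx = \bm{0}^{T}$ (by skew-symmetry) make the resulting sum $\sum_{i,j} (\Qx)_{ij}[\psi_x(\unode{i}) - \psi_x(\unode{j})]$ telescope to zero.

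The main obstacle, in my view, is the bookkeeping around the Hadamard product combined with Kronecker-structured operators: one must verify carefully that the scalar-multiple-of-identity blocks in $\barDx$ interact correctly with the diagonal blocks of $\Fx$ so that contraction with $\wh$ yields scalar inner products of the form $\bm{w}_i^{T} \flux{x}(\unode{i},\unode{j})$. Once those indexing details are nailed down, the rest of the proof is the standard discrete mirror of the continuous entropy analysis, and the hypotheses of the theorem are used only to ensure that the global operators $\Qx, \Qy$ inherit skew-symmetry from the element-level operators through clean lifting via $\Rsk$.
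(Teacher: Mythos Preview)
Your argument is correct, but it proceeds differently from the paper. After the common first step of left-multiplying by $\wh^{T}\barH$ and rewriting the temporal term, the paper \emph{decomposes to the element level}: it pushes the restriction operators through the Hadamard product (this is where the hypothesis on the rows of $\Rsk$ is really used), invokes Lemma~2 of \cite{Crean2018entropy} to rewrite each element contraction $\wk^{T}[\barSxk\circ\Fx(\uk)]\bm{1}_{\kappa}$ as $-\bm{1}_{\kappa}^{T}\mat{E}_{x,\kappa}\bm{\psi}_{x,\kappa}$, and then sums over elements so that the $\mat{E}_{x,\kappa}$ contributions cancel at shared interfaces. You instead stay at the \emph{global} level: the hypothesis $\Qx=\Sx$ gives you a globally skew-symmetric operator, and you combine that with the flux symmetry, Tadmor's shuffle condition, and the consistency/skew-symmetry identities $\Qx\bm{1}=\bm{0}$, $\bm{1}^{T}\Qx=\bm{0}^{T}$ to make the double sum telescope directly. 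Your route is more self-contained (no external lemma needed) and arguably more elementary, since the element-level decomposition and the explicit appearance of the $\mat{E}_{x,\kappa}$ boundary terms are bypassed entirely. The paper's route, on the other hand, mirrors the structure of D-SBP entropy proofs and makes explicit \emph{where} the interface-cancellation assumption enters, which is pedagogically useful given that the paper's emphasis is on how C-SBP differs from D-SBP at the interfaces.
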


\begin{proof}
Left multiply \eqref{eq:SBPstrong} by $\wh^T \barH$:
\begin{gather}
\wh^T \barH \frac{d \uh}{d t} 
  + \wh^T \barH \left[\barDx \circ \Fx(\uh) \right] \bm{1}
  + \wh^T \barH \left[\barDy \circ \Fy(\uh) \right] \bm{1} = 0, \notag \\
  \Rightarrow\qquad
  \frac{d}{dt} \left(\bm{1}^{T} \barH \bm{s}_h \right) = 
  -\wh^T \left[\barSx \circ \Fx(\uh) \right] \bm{1} 
  -\wh^T \left[\barSy \circ \Fy(\uh) \right] \bm{1}. \label{eq:rhs}
\end{gather}
To simplify the temporal term above, I used the differential relation $d\fnc{S} = \bfnc{W}^T d \bfnc{U}$ and the fact that $\barH$ is a diagonal matrix.  I simplified the spatial difference terms by using $\H \Dx = \Qx$, $\H \Dy = \Qy$, and the stated assumption that $\Qx = \Sx$ and $\Qy = \Sy$.

Next, I decompose the global matrices $\barSx$ and $\barSy$ into their constituent, element-based operators in order to express the Hadamard products at the element level.  For instance, in the case of the $x$-component product,
\begin{align*}
\barSx \circ \Fx(\uh)
&= \left[ \sum_{\kappa=1}^{K} \barRsk^T \barSxk  \barRsk \right] \circ \Fx(\uh)  \\
&= \sum_{\kappa=1}^{K} \barRsk^T \left[ \barSxk \circ \Fx(\uk) \right] \barRsk,
\end{align*}
where I used the assumed structure of the restriction operators $\Rsk$.  Using this expression, and an analogous one for $\barSy \circ \Fx(\uh)$, the right-hand-side of \eqref{eq:rhs} becomes
\begin{equation*}
\frac{d}{dt} \left(\bm{1}^{T} \barH \bm{s}_h \right) = 
- \sum_{\kappa=1}^{K} \wk^T \left[ \barSxk \circ \Fx(\uk) + \barSyk \circ \Fy(\uk) \right] \bm{1}_{\kappa},
\end{equation*}
where $\wk = \barRsk \wh$ and $\bm{1}_{\kappa} = \barRsk \bm{1}$.

Lemma 2 from \cite{Crean2018entropy} states that
\begin{align*}
\wk^T \left[ \barSxk \circ \Fx(\uk) \right] \bm{1}_{\kappa} &= -\bm{1}_{\kappa}^T \mat{E}_{x,\kappa} \bm{\psi}_{x,\kappa} \\
\text{and}\qquad
\wk^T \left[ \barSyk \circ \Fy(\uk) \right] \bm{1}_{\kappa} &= -\bm{1}_{\kappa}^T \mat{E}_{y,\kappa} \bm{\psi}_{y,\kappa},
\end{align*}
where $\bm{\psi}_{x,\kappa}$ and $\bm{\psi}_{y,\kappa}$ are the potential fluxes, $\psi_x = \rho u$ and $\psi_y = \rho v$, evaluated at the nodes of element $\kappa$.  Using this lemma and the stated assumption that the symmetric matrices  $\mat{E}_{x,\kappa}$ and $\mat{E}_{y,\kappa}$ cancel at element interfaces, I arrive at
\begin{equation*}
\frac{d}{dt} \left(\bm{1}^{T} \barH \bm{s}_h \right) = 
\sum_{\kappa=1}^{K} \bm{1}_{\kappa}^T \left( \mat{E}_{x,\kappa} \bm{\psi}_{x,\kappa}  + \mat{E}_{y,\kappa} \bm{\psi}_{y,\kappa} \right)
= 0,
\end{equation*}
which is the desired result.
\end{proof}

\begin{remark}
The product $\bm{1}^T \H \bm{s}_{h}$, which Theorem~\ref{thm:ent_cons} tells us is conserved, is a high-order approximation to $\int_{\Omega} \fnc{S} \, d\Omega$.
\end{remark}
 
\subsection{Entropy-stable continuous SBP discretization}
 
Exact semi-discrete conservation of entropy is an attractive property for a discretization to possess for smooth, periodic flows.  Unfortunately, this conservation property reflects the skew symmetry of the underlying discretization, so we should expect dispersion and aliasing errors to pollute the discrete solution of \eqref{eq:SBPstrong} and lead to sub-optimal convergence rates.  Therefore, in this section, we present an entropy-stable discretization --- that is, a semi-discretization that satisfies a non-increasing integral of entropy --- by generalizing the projection-based dissipation.

The idea is simple: apply the projection-based dissipation to the entropy variables.  Thus, the discretization of the Euler equations becomes
\begin{multline}\label{eq:SBPstrong_stab}
  \frac{d \uh}{d t}
  + \left[\barDx \circ \Fx(\uh) \right] \bm{1}
  + \left[\barDy \circ \Fy(\uh) \right] \bm{1} \\
  = - \barH^{-1} \left[\sum_{\kappa=1}^{K} \barRsk^T \left(\barPrjk \right)^T \barHk \mat{A}_{\kappa} \barPrjk \barRsk \right] \wh,
\end{multline}
As with $\barDx$ and $\barDy$, the over-bars appearing on the operators on the right-hand-side of \eqref{eq:SBPstrong_stab} denote Kronecker products between scalar operators and the $4\times 4$ identity; for example, $\barH = \H \otimes \Id_4$.

\begin{theorem}\label{thm:ent_stab}
The discretization \eqref{eq:SBPstrong_stab} is entropy stable if the product $\barHk \mat{A}_{\kappa}$ is symmetric positive semi-definite.
\end{theorem}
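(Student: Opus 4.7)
The plan is to mimic the structure of the proof of Theorem~\ref{thm:ent_cons}, leveraging it directly to dispose of the flux-divergence terms, and then showing the added dissipation contributes a non-positive quantity to the rate of change of the total entropy.

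First I would left-multiply the stabilized discretization~\eqref{eq:SBPstrong_stab} by $\wh^T \barH$. As in the proof of Theorem~\ref{thm:ent_cons}, the temporal term becomes $\frac{d}{dt}(\bm{1}^T \barH \bm{s}_h)$ via the differential relation $d\fnc{S} = \bfnc{W}^T d\bfnc{U}$, and the two Hadamard-flux terms are treated exactly as before: decomposing $\barSx$ and $\barSy$ into element-level contributions, invoking Lemma~2 of \cite{Crean2018entropy}, and using the cancellation of $\mat{E}_{x,\kappa}$ and $\mat{E}_{y,\kappa}$ across element interfaces. The outcome of these steps is identical to Theorem~\ref{thm:ent_cons} except for the surviving right-hand-side dissipation contribution:
\begin{equation*}
\frac{d}{dt}\bigl(\bm{1}^T \barH \bm{s}_h\bigr) = -\wh^T \left[\sum_{\kappa=1}^{K} \barRsk^T \bigl(\barPrjk\bigr)^T \barHk \mat{A}_{\kappa} \barPrjk \barRsk \right] \wh,
\end{equation*}
where the $\barH$ from the left multiplication has cancelled the $\barH^{-1}$ on the right-hand side of~\eqref{eq:SBPstrong_stab}.

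Next I would localize this quadratic form by defining the element-restricted entropy variables $\wk \equiv \barRsk \wh$ and the projected high-frequency content $\bm{z}_\kappa \equiv \barPrjk \wk$, whence
\begin{equation*}
\frac{d}{dt}\bigl(\bm{1}^T \barH \bm{s}_h\bigr) = -\sum_{\kappa=1}^{K} \bm{z}_\kappa^T \bigl(\barHk \mat{A}_{\kappa}\bigr) \bm{z}_\kappa.
\end{equation*}
Finally, the stated hypothesis that $\barHk \mat{A}_{\kappa}$ is symmetric positive semi-definite implies each summand is non-negative, so the total entropy is non-increasing, which is the definition of entropy stability.

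The only subtlety worth flagging is the cancellation of $\barH$ with $\barH^{-1}$ and the commutation with $\wh^T$; this is immediate because $\barH = \H \otimes \Id_4$ is diagonal. I do not anticipate any genuine obstacle: the heavy lifting has already been done by Theorem~\ref{thm:ent_cons}, and the projection-based dissipation is designed precisely so that the element-level quadratic form $\wk^T(\barPrjk)^T \barHk \mat{A}_{\kappa} \barPrjk \wk$ is manifestly non-negative under the stated assumption. The hypothesis on $\barHk \mat{A}_{\kappa}$ is natural since $\barHk$ is diagonal positive definite and one typically chooses $\mat{A}_{\kappa}$ as a (non-negative) scalar multiple of the identity scaled by a local wave speed.
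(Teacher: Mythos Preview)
Your proposal is correct and follows essentially the same approach as the paper: left-multiply by $\wh^T\barH$, invoke the entropy-conservation result (Theorem~\ref{thm:ent_cons}) to eliminate the flux-divergence terms, cancel $\barH$ against $\barH^{-1}$, and then localize the remaining quadratic form to elements where the hypothesis on $\barHk\mat{A}_\kappa$ yields non-negativity of each summand. Your explicit introduction of $\bm{z}_\kappa = \barPrjk\wk$ is a cosmetic addition, not a substantive difference.
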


\begin{proof}
Assuming that the $\mat{A}_{\kappa}$ are symmetric positive definite and commute with $\barHk$, entropy stability follows easily from the entropy-conservative property of \eqref{eq:SBPstrong}; left multiplying \eqref{eq:SBPstrong_stab} by $\wh^T \barH$ we have
\begin{align*}
\wh^T \barH \frac{d \uh}{d t}
  &+ \wh^T \barH \left[\barDx \circ \Fx(\uh) \right] \bm{1}
  + \wh^T \barH \left[\barDy \circ \Fy(\uh) \right] \bm{1} \\
  &= - \wh^T \left[\sum_{\kappa=1}^{K} \barRsk^T \left(\barPrjk \right)^T \barHk \mat{A}_{\kappa} \barPrjk \barRsk \right] \wh  \\
  \Rightarrow\qquad
  \bm{1}^T \H \frac{d \bm{s}_{h}}{d t} &= - \sum_{\kappa=1}^{K} \underbrace{\wk^T \left(\barPrjk \right)^T \barHk \mat{A}_{\kappa} \barPrjk \wk}_{\text{non-negative}} \\
  \Rightarrow\qquad
  \bm{1}^T \H \frac{d \bm{s}_{h}}{d t} &\leq 0,
\end{align*}
where $\wk = \barRsk \wh$.  This result shows that $\bm{1}^T \H \bm{s}_{h}$ is non-increasing, so \eqref{eq:SBPstrong_stab} is entropy stable.
\end{proof}

Many reasonable choices of $\mat{A}_{\kappa}$ are possible that satisfy the assumptions of Theorem~\ref{thm:ent_stab}.  In this work I adopt a form for $\mat{A}_{\kappa}$ similar to the matrix used to scale the penalty terms in \cite{Crean2018entropy}; this choice of $\mat{A}_\kappa$ is dimensionally consistent.  Specifically, I assume $\mat{A}_{\kappa}$ is a block diagonal matrix in which the $4\times 4$ block corresponding to node $i$ is given by
\begin{equation*}
  \left[\mat{A}_{\kappa}\right]_{i} = \left[\frac{1}{2}(\sigma_\xi + \sigma_\eta) \frac{\partial \bfnc{U}}{\partial \bfnc{W}} \right]_{i},
\end{equation*}
where $\partial \bfnc{U}/\partial \bfnc{W}$ is the inverse of the Hessian $\partial^2 \fnc{S} /\partial \bfnc{U}^2$ and, therefore, symmetric positive definite~\cite{Tadmor2003entropy}.  The scalars $\sigma_\xi$ and $\sigma_\eta$ are the spectral radii of the flux Jacobians in reference space:
\begin{align*}
\sigma_\xi &= \fnc{J} \left(| \nabla_x \xi \cdot (u,v) | + a \| \nabla_x \xi \| \right), \\
\text{and}\qquad
\sigma_\eta &= \fnc{J} \left(| \nabla_x \eta \cdot (u,v) | + a \| \nabla_x \eta \| \right),
\end{align*}
where $\fnc{J}$ is the mapping Jacobian and $a = \sqrt{\gamma p/\rho}$ is the speed of sound.

\section{Construction of SBP and LPS operators on triangles}\label{sec:constructSBP}

Multidimensional SBP operators are not unique, so this section provides the information necessary to construct the particular operators used in this work.  The operators are designed for triangular elements, and they pair well with element-local LPS-based stabilization, because both require more nodes than necessary for a total degree polynomial basis.

For convenience, I include the relevant SBP operator definition below~\cite{multiSBP}.

\begin{definition}[Diagonal norm SBP operator]\label{def:sbp}
Consider a bounded, connected domain $\Omega_{\kappa}$ and node set $\nodes=\left\{(\xi_{i},\eta_{i})\right\}_{i=1}^{\nk}$ with $\nk$ nodes.  The matrix $\Dxk$ is a degree $p$, diagonal-norm SBP approximation to $\partial/\partial \xi$ on $\nodes$ if the following three conditions are met.
\begin{enumerate}
\item $\Dxk \bm{p}$ is equal to $\partial \fnc{P}/\partial \xi$ at the nodes $\nodes$, for all polynomials $\fnc{P} \in
    \poly{p}(\Omega_{\kappa})$, where $\poly{p}(\Omega_{\kappa})$ denotes the space of polynomials of total degree $p$ on $\Omega_{\kappa}$. \label{sbp:accuracy}
  \item $\Dxk = \Hk^{-1}\Qx$, where $\Hk$ is a positive-definite and diagonal matrix. \label{sbp:H}
  \item $\Qxk = \Sxk + \frac{1}{2}\Exk$, where $\Sxk^{T}=-\Sxk$, $\Exk^{T}=\Exk$, and $\Exk$ satisfies \label{sbp:Ex}
    \begin{equation*}
      \bm{p}^{T}\Exk\bm{q} = \int_{\partial\Omega_{\kappa}} \fnc{P} \, \fnc{Q}\; n_{\xi}\,
      d \Gamma,
    \end{equation*}
    for all polynomials $\fnc{P}, \fnc{Q} \in \poly{r}(\Omega_{\kappa})$, where
    $r \ge p$. In the above integral, $n_{\xi}$ is the $\xi$ component of
    $\bm{n}=\left[n_{\xi},n_{\eta}\right]^{T}$, the outward pointing unit normal on
    $\partial \Omega_{\kappa}$.
\end{enumerate}
An analogous definition holds for $\mat{D}_{\eta,\kappa}$.  
\end{definition}

\subsection{Construction of the SBP operator $\Dxk$}

\subsubsection{Determination of the node locations and norm matrix}

A remarkable property of a degree $p$ diagonal-norm SBP operator is that the matrix $\Hk$ and the node locations $\nodes$ define a cubature rule that is at least $2p-1$ exact; this was shown for classical one-dimensional SBP operators in \cite{Hicken2013quad} and for the multidimensional case in \cite{multiSBP}.  Therefore, one approach to the construction of SBP operators on triangles is to first select or build a suitable cubature rule --- $\Hk$ and the node locations --- and then find $\Qxk$.  This was the approach taken in references \cite{multiSBP} and \cite{Fernandez2017simultaneous}, and it is the one we will follow here.

\ignore{
\begin{remark}
  See \cite{X} for a coupled approach to finding a multidimensional SBP operator, in which $\Hk$, $\Qxk$, and the nodes are determined simultaneously using an optimization approach.
\end{remark}
}

In order to select the cubature rules, I first set out some requirements for a target degree $p$ SBP operator.
\begin{itemize}
\item The cubature weights must be positive; this is a requirement of the SBP operator definition and is necessary for stability.
\item The cubature rule should be exact for total degree $2p$ polynomials.  This is one degree higher than necessary for SBP operators~\cite{multiSBP}, but our numerical experience with discontinuous SBP discretizations~\cite{Fernandez2017simultaneous} suggests that $2p$ exactness is necessary to achieve order $h^{p+1}$ convergence rates on more challenging problems.  This requirement is also consistent with discontinuous-Galerkin theory~\cite{Cockburn1990runge}, and it simplifies the construction of the LPS operator, as we will see below.
\item There should be a node at each vertex.  This requirement helps reduce the global number of degrees of freedom for C-SBP discretizations, since a vertex degree-of-freedom is shared by many elements.
\item There should be $p+2$ nodes along each face located at the LGL quadrature points.  This requirement ensures that $\Exk$ is a diagonal matrix, which, as discussed in Section~\ref{sec:euler}, facilitates entropy conservation.  This requirement could be relaxed to requiring only that the contributions to $\Exk$ on a given face involve only nodes lying on that face; this relaxed condition is satisfied by the operators used in \cite{multiSBP}, but those operators do not meet the second requirement above.
\end{itemize}

Cubature rules satisfying the above requirements are either available in the literature~\cite{Cools1999monomial,Liu1998exact} or can be constructed directly; for details on their construction see~\cite{multiSBP} and the references therein.  Figure~\ref{fig:nodes} shows the nodal locations for the $p=0$ through to $p=4$ operators. The corresponding norm matrices are found by simply inserting the cubature weights along the diagonal of $\Hk$ in an order consistent with the nodes.

I now turn to the construction of $\Qxk$ or, more precisely, its symmetric and skew-symmetric parts, $\frac{1}{2}\Exk$ and $\Sxk$, respectively.  

\begin{figure}[tbp]
  \renewcommand{\thesubfigure}{}
    \subfigure[$p=0$ \label{fig:nodes_p0}]{%
    \includegraphics[width=0.19\textwidth]{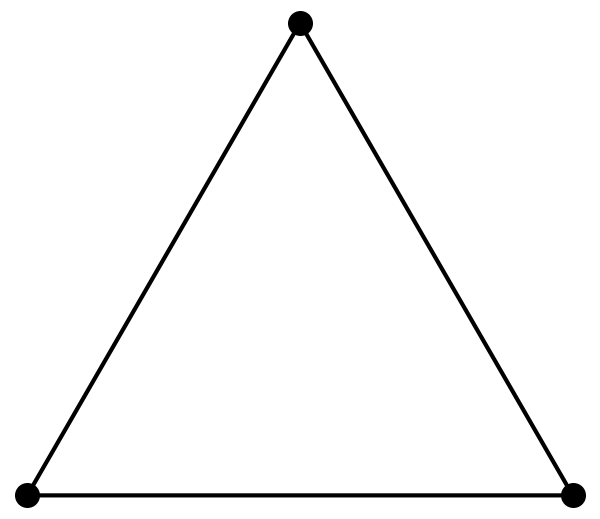}}
  \subfigure[$p=1$ \label{fig:nodes_p1}]{%
    \includegraphics[width=0.19\textwidth]{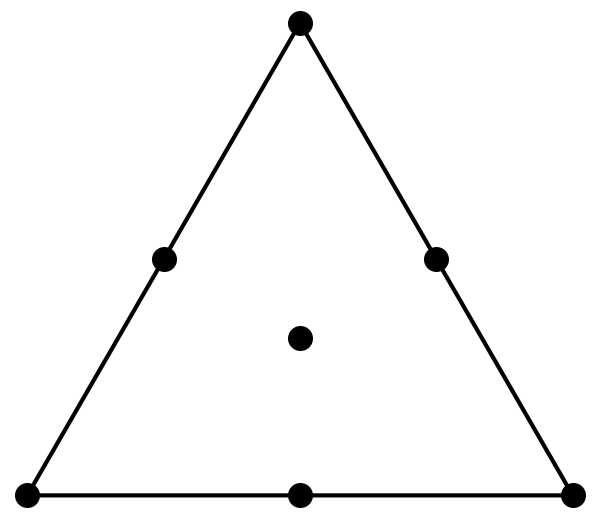}}
  \subfigure[$p=2$ \label{fig:nodes_p2}]{%
        \includegraphics[width=0.19\textwidth]{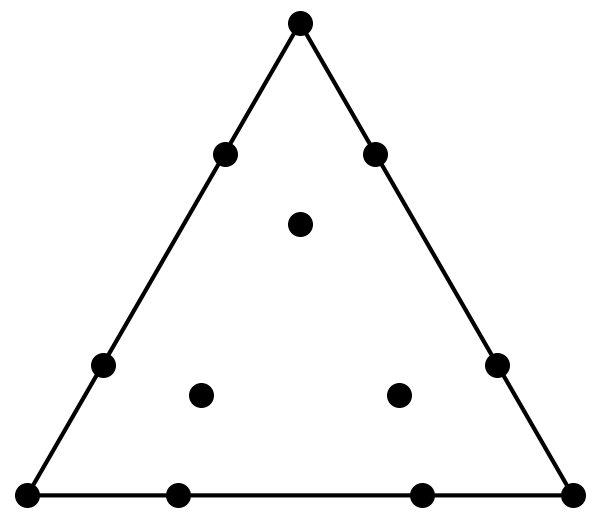}}
  \subfigure[$p=3$ \label{fig:nodes_p3}]{%
        \includegraphics[width=0.19\textwidth]{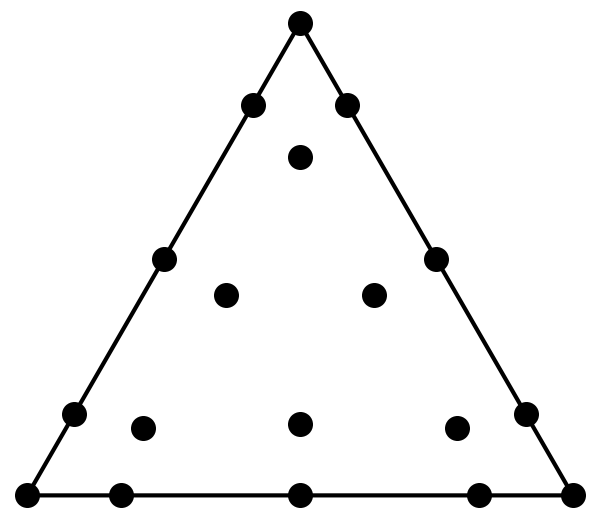}}
  \subfigure[$p=4$ \label{fig:nodes_p4}]{%
        \includegraphics[width=0.19\textwidth]{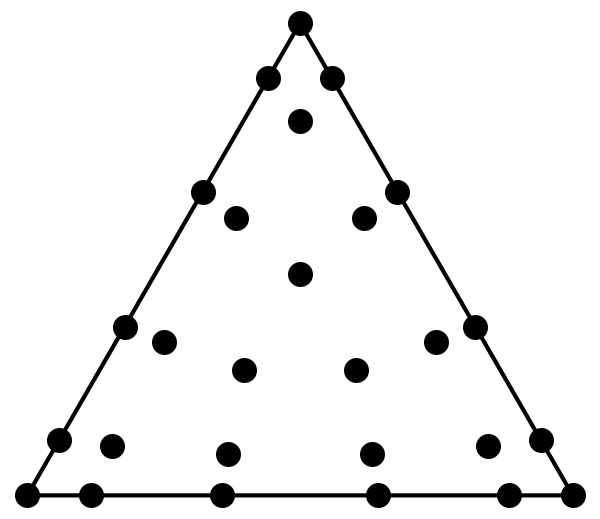}}
  \caption{Node distributions of the SBP operators used for the CSBP discretizations.\label{fig:nodes}}
\end{figure}

\subsubsection{Computation of the face operator $\Exk$}

To describe the construction of $\Exk$, consider the $p=1$ element on the equilateral triangle with the node ordering shown in Figure~\ref{fig:indices}.  For each face $\gamma \in \{1,2,3\}$ on the triangle, we define a restriction operator $\Rgk$ that maps the solution on the volume nodes to the face nodes.  This operator is analogous to the restriction operator $\Rsk$ that maps global degrees of freedom to the degrees of freedom on element $\kappa$.  For example, the operator mapping from the volume nodes to face 3 is given by 
\begin{equation*}
\mat{R}_{3,\kappa} =
\begin{bmatrix}
0 & 0 & 1 & 0 & 0 & 0 & 0 \\
1 & 0 & 0 & 0 & 0 & 0 & 0 \\
0 & 0 & 0 & 0 & 0 & 1 & 0
\end{bmatrix}.
\end{equation*}

\begin{figure}[tbp]
    \subfigure[$p=1$ node indices \label{fig:p1indices}]{%
    \includegraphics[width=0.24\textwidth]{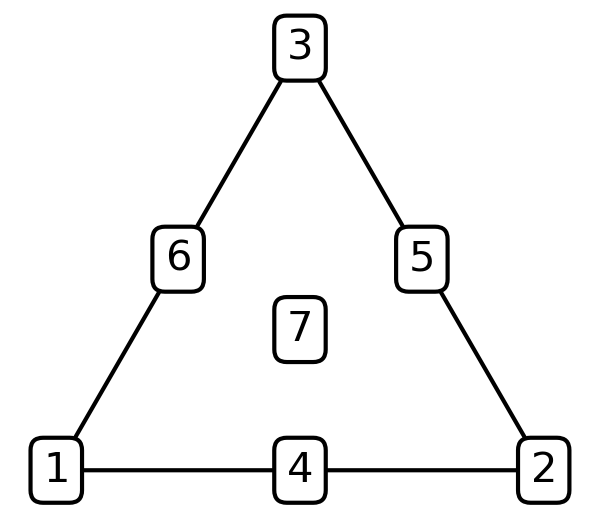}}
  \subfigure[face 1 indices \label{fig:p1indices_f1}]{%
    \includegraphics[width=0.24\textwidth]{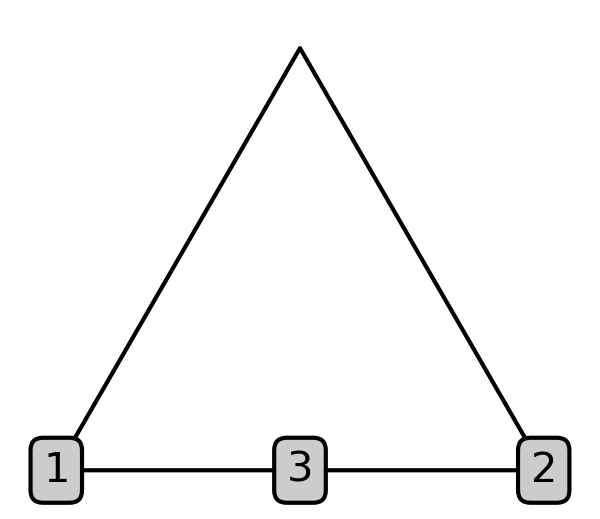}}
  \subfigure[face 2 indices \label{fig:p1indices_f2}]{%
        \includegraphics[width=0.24\textwidth]{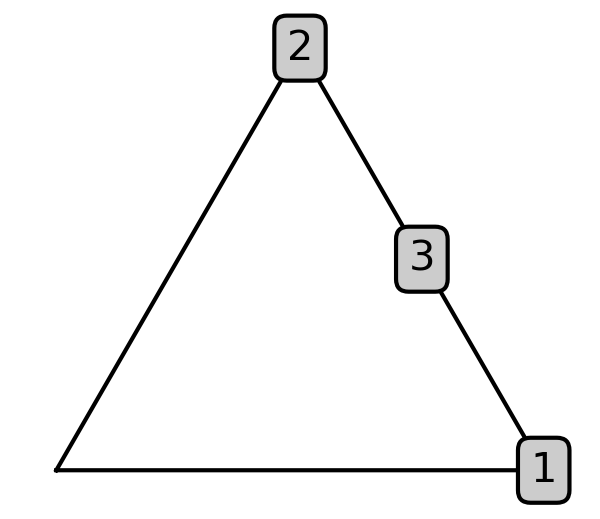}}
  \subfigure[face 3 indices \label{fig:p1indices_f3}]{%
        \includegraphics[width=0.24\textwidth]{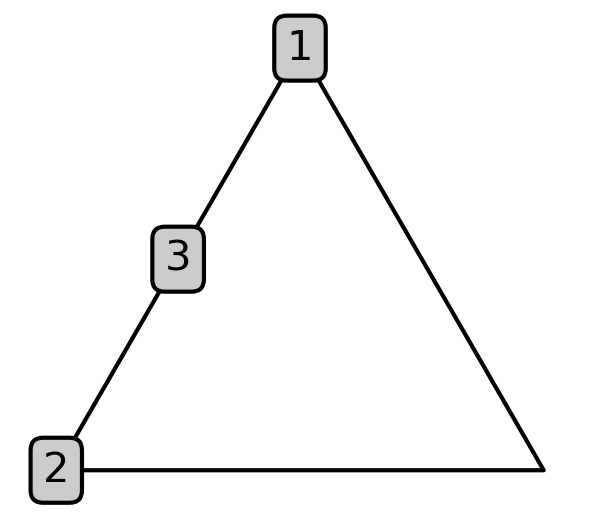}}
  \caption{Volume node indices for $p=1$ and the face-based subindices.\label{fig:indices}}
\end{figure}

All three faces use the same LGL-based quadrature rule --- recall that the face nodes are required to coincide with the LGL nodes in the face reference space. The quadrature weights will be held in the diagonal matrix $\Hg \in \mathbb{R}^{(p+2)\times (p+2)}$.  Considering the $p=1$ case, this face-quadrature matrix is given
\begin{equation*}
\Hg = \frac{1}{6} \begin{bmatrix} 1 & 0 & 0 \\ 0 & 1 & 0 \\ 0 & 0 & 4 \end{bmatrix}.
\end{equation*}
Note that the ordering in $\Hg$ for $p=1$ corresponds to the face-node ordering shown in Figures~\ref{fig:p1indices_f1}--\ref{fig:p1indices_f3}, in which the face-midpoint node is ordered last.

Using the $\Rgk$ and $\Hg$, we define the symmetric part of $\Qxk$ as follows \cite{Fernandez2017simultaneous}:
\begin{equation*}
\Exk = \sum_{\gamma=1}^{3} n_{\xi,\gamma} \Rgk^T \Hg \Rgk,
\end{equation*}
where $n_{\xi,\gamma}$ is the $\xi$ component of the outward normal vector on face $\gamma$; thus, for the equilateral triangle, $n_{\xi,1} = 0$, $n_{\xi,2} = \sqrt{3}/2$, and $n_{\xi,3} = \sqrt{3}/2$.  It is easy to show that $\Exk$, as constructed above, is a diagonal matrix.  Furthermore, it satisfies the accuracy condition \ref{sbp:Ex} in Definition~\ref{def:sbp} with $r=p$.  To see this, let $\bm{p}$ and $\bm{q}$ denote two arbitrary polynomials, $\fnc{P}$ and $\fnc{Q} \in \poly{p}(\Omega_{\kappa})$, evaluated at the nodes $\nodes$.  Then, 
\begin{equation*}
\bm{p}^{T}\Exk\bm{q} = \sum_{\gamma=1}^{3} n_{\xi,\gamma} \bm{p}^T \Rgk^T \Hg \Rgk \bm{q} 
= \sum_{\gamma=1}^{3} n_{\xi,\gamma} \int_{\Gamma_\gamma} \fnc{P} \, \fnc{Q} \, d \Gamma
=  \int_{\partial\Omega_{\kappa}} \fnc{P} \, \fnc{Q}\; n_{\xi} \, d \Gamma,
\end{equation*}
since the face quadrature is exact for the product $\fnc{P} \, \fnc{Q} \in \poly{2p}(\Gamma_{\gamma})$.

\subsubsection{Computation of the skew-symmetric matrix $\Sxk$}

Finally, we use $\Sxk$ to satisfy the accuracy conditions \ref{sbp:accuracy} in Definition~\ref{def:sbp}.  Let $\mat{L} \in \mathbb{R}^{\nk \times n_p}$ hold basis functions for the (total) degree $p$ polynomial space $\poly{p}(\Omega_{\kappa})$ evaluated at the nodes $\nodes$, where $n_p = (p+1)(p+2)/2$ is the number of basis functions in two dimensions.  In addition, let $\mat{L}' \in \mathbb{R}^{\nk \times n_p}$ denote the $\xi$-direction partial derivatives of the basis functions, also evaluated at the nodes.  Then the accuracy condition can be written concisely as (recall that $\Dxk = \Hk^{-1}(\Sxk + \frac{1}{2} \Exk)$)
\begin{equation}\label{eq:accuracy}
\Dxk \mat{L} = \mat{L}' \qquad \Rightarrow \qquad \Sxk \mat{L} = \Hk \mat{L}' - \frac{1}{2}\Exk \mat{L}.
\end{equation}

There are $\nk n_p$ equations in \eqref{eq:accuracy} and $\nk(\nk-1)/2$ unknowns in the skew-symmetric matrix $\Sxk$; thus, at first glance, it is not obvious that \eqref{eq:accuracy} is solvable.  For example, if we consider a triangular element for $p=2$, then there are $n_p = 6$ basis functions and it would appear we need $\nk \geq 11$ to have a sufficient number of unknowns in $\Sxk$.  However, the number of unique equations is greatly reduced by the quadrature accuracy built into $\Hk$ and $\Exk$; that is, we are saved by the so-called compatibility conditions~\cite{Fernandez2014generalized,multiSBP}.  To see this, left multiply \eqref{eq:accuracy}  by $\mat{L}^T$ and add the transpose of the result.  This produces the \emph{symmetric} $n_p \times n_p$ matrix equation
\begin{equation}\label{eq:compatability}
\underbrace{\mat{L}^T \Sxk \mat{L} + \mat{L}^T \Sxk^T \mat{L}}_{\ds = \mat{0}} = \underbrace{\mat{L}^T \Hk \mat{L}' + (\mat{L}')^T \Hk \mat{L} - \mat{L}^{T} \Exk \mat{L}}_{\ds = \mat{0}}.
\end{equation}
On the left, we used the skew-symmetry of $\Sxk$ to conclude that $\mat{L}^T (\Sxk + \Sxk^T) \mat{L} = \mat{0}$, while on the right we used the fact that $\Hk$ and $\Exk$ are degree $2p$ exact cubatures\footnote{In this work, $\Hk$ is $2p$ exact, but, as was shown in \cite{multiSBP}, one only needs $2p-1$ exactness for the operators to exist.}:
\begin{align*}
\left[ \mat{L}^T \Hk \mat{L}' + (\mat{L}')^T \Hk \mat{L} \right]_{ij} &= \int_{\Omega_\kappa} \left(\fnc{L}_{i} \frac{\partial \fnc{L}_{j}}{\partial \xi}  + \fnc{L}_{j} \frac{\partial \fnc{L}_{i}}{\partial \xi} \right) \,d\Omega \\
& = \int_{\partial \Omega_\kappa} \fnc{L}_i \fnc{L}_j \; n_{\xi} \, d\Gamma 
= \left[ \mat{L}^{T} \Exk \mat{L} \right]_{ij},
\end{align*}
where we have used $\fnc{L}_i$ and $\fnc{L}_{j}$ to denote the $i$th and $j$th basis functions, respectively.

Consequently, due to the symmetry of the matrix equation \eqref{eq:compatability}, there are $n_p(n_p + 1)/2$ equations automatically satisfied in \eqref{eq:accuracy}.  Thus, as the inequalities below demonstrate, the number of unknowns is always greater than or equal to the number of equations provided the number of nodes is greater than or equal to the number of basis functions, $\nk \geq n_p$:
\begin{alignat*}{3}
& & (\nk - n_p)\left[(\nk - n_p) - 1\right] &\geq 0, & \qquad &\forall\; \nk \geq n_p \\
&\Rightarrow \qquad &
\underbrace{\frac{\nk (\nk - 1)}{2}}_{\text{num. unknowns}} &\geq 
\underbrace{\nk n_p - \frac{n_p(n_p+1)}{2}}_{\text{num. equations}}, & \qquad &\forall\; \nk \geq n_p
\end{alignat*}

The bottomline is that $\Sxk$ is underdetermined by \eqref{eq:accuracy} whenever $\nk > n_p + 1$, so we need to introduce additional conditions.  In \cite{multiSBP} and \cite{Fernandez2017simultaneous}, the authors addressed this underdetermined problem by minimizing the Frobenius norm of $\Sxk$ under the constraint \eqref{eq:accuracy}.  In the present work, I instead minimize the Frobenius norm of $\Hk^{-1} \Sxk$, which is motivated by the desire to bound the spectral radius of this element-local derivative operator.

\begin{remark}
Recall that the $\Exk$ terms cancel along the element interfaces, so $\Hk^{-1} \Sxk$ is more relevant than $\Hk^{-1} \Qxk$ for the global C-SBP operators $\Dx$ and $\Dy$.
\end{remark}

In summary, the skew-symmetric matrix $\Sxk$ is determined by the following optimization problem:
\begin{equation*}
\min_{\Sxk}  \| \Hk^{-1} \Sxk \|_{F}^2, \qquad \text{subject to} \quad \Sxk \mat{L} = \Hk \mat{L}' - \frac{1}{2}\Exk \mat{L}.
\end{equation*}
In practice, I solve this convex, quadratic optimization problem by converting it to an equivalent 2-norm formulation, so I can apply the standard Moore-Penrose pseudo inverse.  I store the unknown entries corresponding to the lower-triangular part of $\Sxk$ in the one-dimensional vector $\bm{s} \in \mathbb{R}^{\nk(\nk-1)/2}$, where the matrix and vector entries are related by
\begin{equation*}
s_{m(i,j)} = (\Sxk)_{ij},\qquad \text{with}\quad m(i,j) \equiv j + (i-1)(i-2)/2,
\end{equation*}
for all $i=2,3,\ldots,\nk$ and $j = 1,2,\ldots,i$.  Using the vector of unknowns $\bm{s}$, the objective function becomes
\begin{align*}
\| \Hk^{-1} \Sxk \|_{F}^2 &= \sum_{i=1}^{\nk} \sum_{j=1}^{\nk} \frac{1}{(\Hk)_{ii}^2} (\Sxk)_{ij}^2 \\
&= \sum_{i=2}^{\nk} \sum_{j=1}^{i} \left(\frac{1}{(\Hk)_{ii}^2} + \frac{1}{(\Hk)_{jj}^2}\right) (\Sxk)_{ij}^2 \\
&= \bm{s}^{T} \mat{W} \bm{s},
\end{align*}
where $\mat{W}$ is a diagonal weighting matrix given by $(\mat{W})_{m,m} = 1/(\Hk)_{ii}^2  + 1/(\Hk)_{jj}^2$.  Let $\mat{A} \bm{s} = \bm{b}$ denote the vector form of the matrix equation $\Sxk \mat{L} = \Hk \mat{L}' - \frac{1}{2}\Exk \mat{L}$, and define the scaled solution $\tilde{\bm{s}} = \mat{W}^{\frac{1}{2}} \bm{s}$.  Then the optimization problem is equivalent to
\begin{equation*}
\min_{\tilde{\bm{s}}} \| \tilde{\bm{s}} \|_{2}^{2}, \qquad \text{subject to} \quad \mat{A} \mat{W}^{-\frac{1}{2}} \tilde{\bm{s}} = \bm{b}.
\end{equation*}
Once I find the minimum 2-norm solution to $\mat{A} \mat{W}^{-\frac{1}{2}} \tilde{\bm{s}} = \bm{b}$, I can recover $\bm{s} = \mat{W}^{-\frac{1}{2}} \tilde{\bm{s}}$ and, hence, $\Sxk$.

\subsection{Alternative constructions for the LPS operators}\label{sec:construct_lps}

I conclude this section by generalizing the construction and application of the local-projection-stabilization operator.  I describe three approaches: one suitable for element-based schemes that have a $2p$ exact norm; one for element-based schemes with $2p-1$ exact norms; and one based on reconstruction, which is suitable for traditional finite-difference schemes.

\subsubsection{LPS operator for $2p$ exact norms}

Recall the continuous $L^2$ projection operator applied to some $\fnc{U} \in L^2(\Omega_\kappa)$: find $\tilde{\fnc{U}} \in \poly{p}(\Omega_\kappa)$ such that  
\begin{equation}\label{eq:L2proj_tri}
\int_{\Omega_\kappa} \tilde{\vfnc}(\ufnc - \tilde{\ufnc}) \, d\Omega = 0, \qquad \forall\, \tilde{\vfnc} \in \poly{p}(\Omega_\kappa).
\end{equation}
As in Section~\ref{sec:2d_stab}, let $\{\fnc{L}_{i}\}_{i=1}^{n_p}$ denote an orthonormal basis for $\poly{p}(\Omega_\kappa)$, and let $\mat{L}_{ij} = \fnc{L}_{j}(\xi_i,\eta_i)$ be the $\nk \times n_p$ matrix holding the values of this basis at the nodes $\Xi_\kappa$.


When the norm matrix $\Hk$ is exact for degree $2p$ polynomials, the $L^2$ projection operator can be discretized as described in Section~\ref{sec:2d_stab} --- that is, by $\tilde{\bm{u}}_\kappa = \mat{L} \mat{L}^T \Hk \uk$ --- and the ``high-frequency'' modes in $\uk \in \mathbb{R}^{\nk}$ are given by  
\begin{equation*}
\Prjk \uk = \left(\mat{I} - \mat{L}\mat{L}^T \Hk\right)\uk,
\end{equation*}
where $\mat{I} \in \mathbb{R}^{\nk\times \nk}$ is the identity matrix.  As mentioned previously in Section~\ref{sec:2d_stab}, this is the construction for $\Prjk$ that I use in this work, since the chosen $\Hk$ define $2p$ exact cubatures.

\subsubsection{LPS operator for $2p-1$ exact norms}

If the norm matrix is only $2p-1$ exact, it can still be used to discretize \eqref{eq:L2proj_tri}.  In this case one obtains the approximate $L^2$ projection 
\begin{equation*}
\tilde{\bm{u}}_{\kappa} = \mat{L} (\mat{L}^T \Hk \mat{L})^{-1} \mat{L} \Hk \uk,
\end{equation*} 
since $\mat{L}^T \Hk \mat{L}$, while nonsingular, is no longer equal to the identity matrix.  Despite not being exact for all polynomials in $\poly{p}(\Omega_\kappa)$, this projection operator remains exact for constant functions; consequently, the corresponding projection-based stabilization is conservative.

\subsubsection{LPS operator based on reconstruction}

More generally, the projection operator can use some form of reconstruction, such as polynomial regression.  I will illustrate this using a traditional finite-difference scheme on a uniform mesh in one-dimension, as shown in Figure~\ref{fig:Prj_recon}.  

\begin{figure}[tbp]
  \begin{center}
    \includegraphics[width=0.9\textwidth]{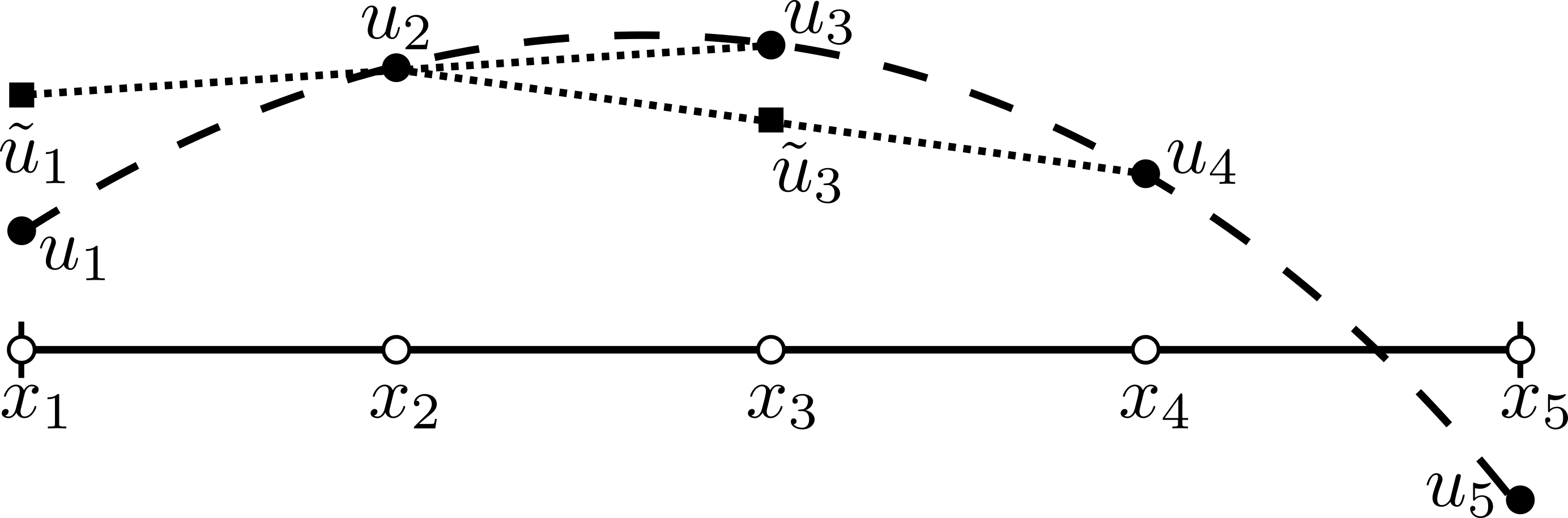}
    \caption[]{Projection operator for finite-difference methods.\label{fig:Prj_recon}}
  \end{center}
\end{figure}

Suppose we want to approximate projection onto locally linear functions.  One such projection can be defined using a simple average at interior nodes; for node $i$ we would have $\tilde{u}_i = (u_{i+1} + u_{i-1})/2$.  At boundary nodes, we can use extrapolation from the interior; for example, at node $i=1$ we could use $\tilde{u}_1 = 2 u_2 - u_3$.  We can then use the difference $u_i - \tilde{u}_i$ to define $\Prjk$, which, as above, can be used to isolate ``high-frequency'' modes.  For the example in Figure~\ref{fig:Prj_recon}, using 5 nodes and a second-order SBP operator, we obtain
\begin{equation*}
\Prjk = \underbrace{\begin{bmatrix}
1 & 0 & 0 & 0 & 0 \\
0 & 1 & 0 & 0 & 0 \\
0 & 0 & 1 & 0 & 0 \\
0 & 0 & 0 & 1 & 0 \\
0 & 0 & 0 & 0 & 1 
\end{bmatrix}}_{\ds \text{extracts}\; u_i}
- 
\underbrace{\frac{1}{2}
\begin{bmatrix}
0 & 4 & -2 & 0 & 0 \\
1 & 0 & \phm1 & 0 & 0 \\
0 & 1 & \phm0 & 1 & 0 \\
0 & 0 & \phm1 & 0 & 1 \\
0 & 0 & -2 & 4 & 0 
\end{bmatrix}}_{\ds \text{defines}\; \tilde{u}_{i}}
= 
\frac{1}{2}
\begin{bmatrix}
\phm2 & -4 & \phm2 & \phm0 & \phm0 \\
-1 & \phm2 & -1 & \phm0 & \phm0 \\
\phm0 & -1 & \phm2 & -1 & \phm0 \\
\phm0 & \phm0 & -1 & \phm2 & -1 \\
\phm0 & \phm0 & \phm2 & -4 & \phm2 
\end{bmatrix}
\end{equation*}

Using $\Prjk$, the norm matrix $\Hk$, and the scaling matrix $\mat{A}_{\kappa}$, we can define the LPS operator exactly as we did for the element-based discretizations: $\Mlps = \Prjk^T \Hk \mat{A}_\kappa \Prjk$.  Continuing our second-order-accurate example on a 5-node grid with $\Hk = \frac{h}{2} \mydiag(1,2,2,2,1)$, and assuming $\mat{A}_\kappa = \mat{I}$ for simplicity, the LPS operator is 
\begin{equation*}
\Mlps = \Prjk^T \Hk \mat{A}_\kappa \Prjk = 
\frac{h}{8}
\begin{bmatrix}
\phm6 & -12 & \phm6 & \phm0 & \phm0 \\
-12 & \phm26 & -16 & \phm2 & \phm0 \\
\phm6 & -16 & \phm20 & -16 & \phm6 \\
\phm0 & \phm2 & -16 & \phm26 & -12 \\
\phm0 & \phm0 & \phm6 & -12 & \phm6 
\end{bmatrix}
\end{equation*}

Readers familiar with traditional finite-difference methods may recognize that the interior scheme of $\Mlps$ becomes identical to divided-difference dissipation~\cite{Jameson1981numerical,Pulliam1986artificial,Mattsson2004stable} for nodes sufficiently far from the boundary.  Indeed, LPS provides a new perspective for divided-difference dissipation that may prove useful for constructing stable dissipation operators for finite-difference methods, particularly near the boundary.  
 
\ignore{
is constructed much like it was for the one-dimensional LGL case (when $q > p+1$).  First, consider the continuous $L^2$ projection operator applied to some $\fnc{U} \in L^2(\Omega_\kappa)$: find $\tilde{\fnc{U}} \in \poly{p}(\Omega_\kappa)$ such that  
\begin{equation}\label{eq:L2proj_tri}
\int_{\Omega_\kappa} \tilde{\vfnc}(\ufnc - \tilde{\ufnc}) \, d\Omega = 0, \qquad \forall\, \tilde{\vfnc} \in \mathbb{P}^{p}(\Omega_\kappa).
\end{equation}
Let $\{\fnc{L}_{i}\}_{i=1}^{n_p}$ denote an \emph{orthonormal} basis for $\poly{p}(\Omega_\kappa)$, and let $\mat{L}_{ij} = \fnc{L}_{j}(\xi_i,\eta_i)$ be the $\nk \times n_p$ matrix holding the values of this basis at the nodes $\Xi_\kappa$.  Then, as in the one-dimensional case, we can represent $\tilde{\ufnc}$ evaluated at the nodes as $\fnc{L} \bm{y}$ for some $\bm{y} \in \mathbb{R}^{n_p}$.  Finally, choosing the $\fnc{L}_{i}$ as the test functions $\tilde{\vfnc}$ and using $\Hk$ to perform the discrete integration we find that \eqref{eq:L2proj_tri} has the discretization
\begin{equation*}
\mat{L}^T\Hk \left(\uk - \mat{L} \bm{y}\right) = 0 \qquad \Rightarrow \qquad
\bm{y} = \mat{L}^T \Hk \uk,
\end{equation*}
where, as in the one-dimensional case, we have used $\mat{L}^T \Hk \mat{L} = \mat{I} \in \mathbb{R}^{(p+1)\times(p+1)}$, which follows from the orthonormality of the basis polynomials and the accuracy of $\Hk$.
}

\section{Results}\label{sec:results}

I use the following results to verify the accuracy and stability of the local-projection stabilization for C-SBP discretizations.  I also use the results to draw some comparisons with the more common D-SBP discretizations.

\subsection{Linear advection}

I use the constant-coefficient linear advection equation to study the accuracy and efficiency of the C-SBP discretization compared with the D-SBP method from \cite{Fernandez2017simultaneous}.  I will also use this PDE to investigate the spectra of the two discretizations.  

Consider the two-dimensional, constant-coefficient advection equation on a square periodic domain $\Omega = [0,1]^2$:
\begin{equation}\label{eq:2d_advec}
  \begin{alignedat}{2}
    \frac{\partial \ufnc}{\partial t} + \lambda_x \frac{\partial \ufnc}{\partial x} + \lambda_y \frac{\partial \ufnc}{\partial y} &= 0,
    &\qquad &\forall (x,y) \in \Omega, \\
    \ufnc(0,y,t) &= \ufnc(1,y,t), &\qquad &\forall t \geq 0, y \in [0,1], \\
    \ufnc(x,0,t) &= \ufnc(x,1,t), &\qquad &\forall t \geq 0, x \in [0,1]. \\
  \end{alignedat}
\end{equation}
where $(\lambda_x,\lambda_y) = (1,1)$ is the advection velocity.  The initial condition is the same bell-shaped function used in \cite{multiSBP} and \cite{Fernandez2017simultaneous}:
\begin{equation*}
\ufnc(x,y,0) = \begin{cases} 
1 - (4\rho^2 - 1)^5, & \text{if}\; \rho < \frac{1}{2} \\
1, & \text{otherwise,}
\end{cases}
\end{equation*}
where $\rho^2 = (x-1/2)^2 + (y-1/2)^2$.  The initial condition is plotted in Figure~\ref{fig:IC}.

I divide the domain $\Omega$ into a triangular mesh using the recursive kernel-based method described in \cite{Fernandez2017simultaneous}.  I consider four mesh levels; the level-2 mesh, which is the second coarsest mesh, is illustrated in Figure~\ref{fig:mesh_lev2}.  In order to move from one level to the next, each element in the mesh is divided, in reference space, according to a canonical subdivision, \ie, the kernel mesh; for additional details, please see \cite{Fernandez2017simultaneous}.  My motivation for using this kernel-based set of meshes is to avoid meshes in which the element size varies smoothly from element to element.  Such smooth meshes might unfairly bias the results in favor of the C-SBP discretization.

\begin{figure}[t]
  \subfigure[second coarsest mesh \label{fig:mesh_lev2}]{%
    \includegraphics[height=0.49\textwidth]{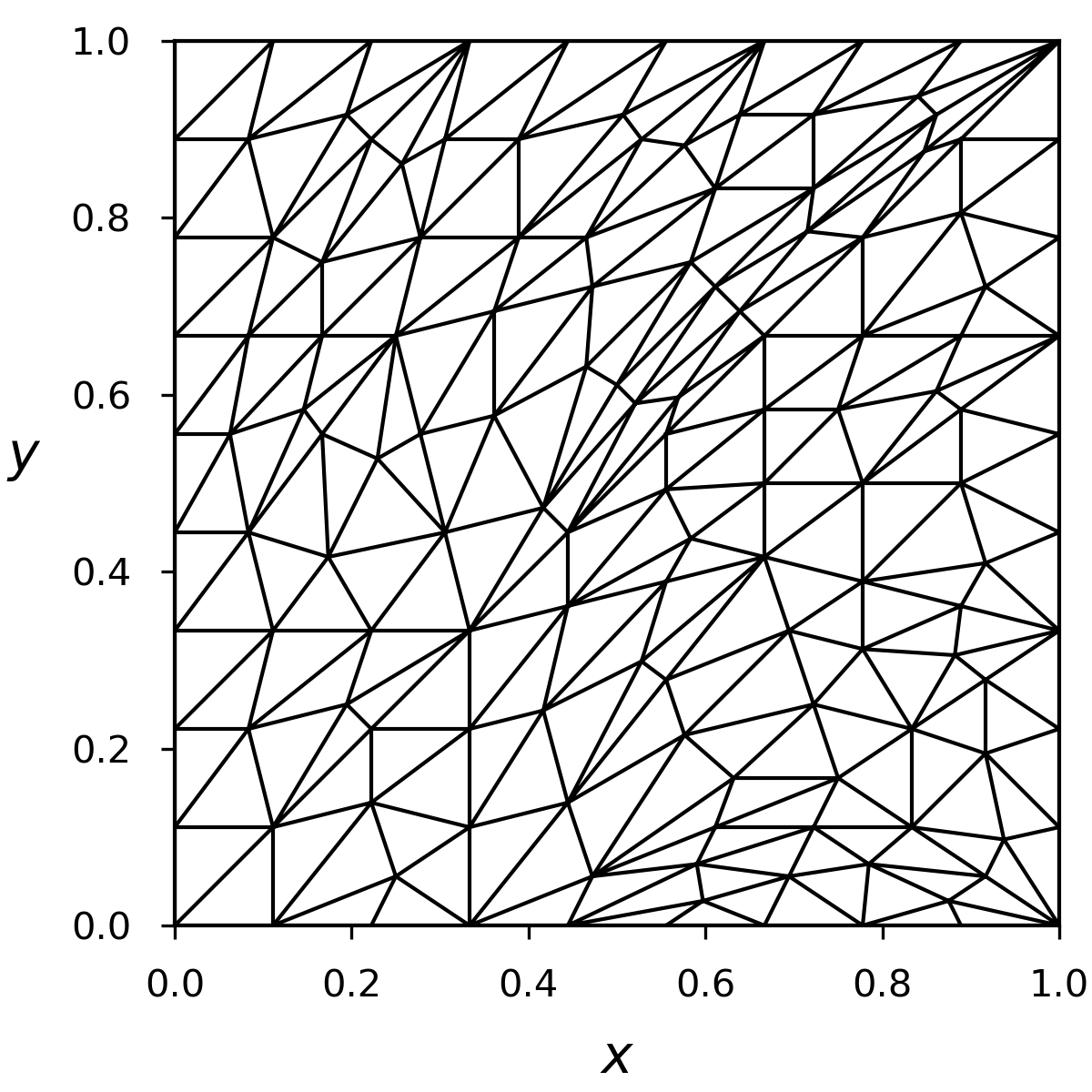}}\hfill
  \subfigure[Initial condition \label{fig:IC}]{%
        \includegraphics[height=0.49\textwidth]{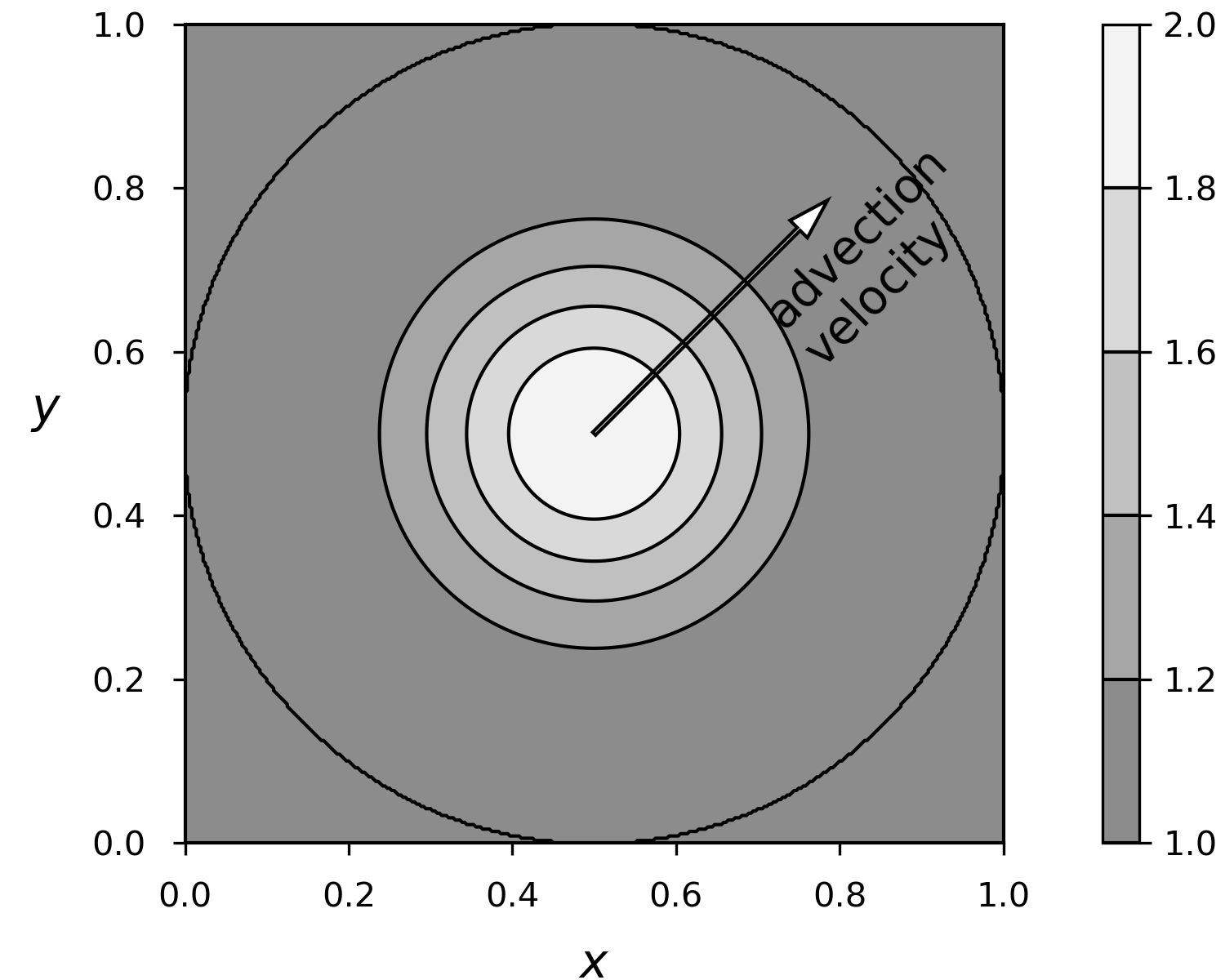}}
  \caption{Example mesh and initial condition for the linear-advection problem.}
\end{figure}

For a given mesh level, let $\Tmesh$ denote the set of element subdomains:
\begin{equation*}
  \Tmesh \equiv \left\{ \Omega_\kappa \right\}_{\kappa=1}^{K}.
\end{equation*}
Each triangle in $\Tmesh$ is the image of the reference triangle, $\Omega_{\xi} = \{ (\xi,\eta) | \xi \geq -1, \eta \geq -1, \eta \leq -\xi \}$, under an appropriate mapping.  For the linear-advection experiments, each triangle can be obtained from $\Omega_\xi$ using an affine mapping; I will discuss curvilinear elements in the context of the Euler equations.

The D-SBP discretization is the same as the one described in \cite{Fernandez2017simultaneous} and uses the $p=1$ and $p=2$ SBP-$\Omega$ operators introduced in that same paper.  The $p=3$ and $p=4$ operators from  \cite{Fernandez2017simultaneous} \emph{were not used in the present studies}, since they have only $2p-1$ exact cubatures.  Instead, I created new $p=3$ and $p=4$ operators that have $2p$ exact cubatures using the procedure described in \cite{Fernandez2017simultaneous}.  Consequently, both the D-SBP and C-SBP discretizations use operators based on $2p$ exact cubatures.

Although the C-SBP discretization has been described throughout this paper, I need to clarify two details for this numerical experiment.  First, based on the affine-mapping assumption, the global first-derivative SBP operators are given by
\begin{gather}
  \Dx = \H^{-1} \Qx \qquad\text{and}\qquad
  \Dy = \H^{-1} \Qy \notag \\
  \intertext{where}  
  \Qx \equiv \sum_{\kappa=1}^{K} \Rs_{\kappa}^T
  \left[ \phantom{-}\left(\partial_\eta y \right)_{\kappa} \Qxi - \left( \partial_\xi y \right)_{\kappa} \Qeta \right]
  \Rs_{\kappa}, \\
  \Qy \equiv \sum_{\kappa=1}^{K} \Rs_{\kappa}^T
  \left[ -\left(\partial_\eta x \right)_{\kappa} \Qxi + \left( \partial_\xi x \right)_{\kappa} \Qeta \right]
  \Rs_{\kappa}, \\
  \intertext{and}
  \H  \equiv \sum_{\kappa=1}^{K} \Rs_{\kappa}^T  J_{\kappa} \Hk \Rs_{\kappa}.
  \label{eq:Qx_and_H_2d}
\end{gather}
where $J_{\kappa} = \left[(\partial_\xi x)(\partial_\eta y) - (\partial_\xi y)(\partial_\eta x) \right]_{\kappa}$ is the determinant of the mapping Jacobian on element $\kappa$.  The Jacobian terms --- $\partial_\xi x$, $\partial_\eta x$, $\partial_\xi y$, and $\partial_\eta y$ --- and the determinant are constant over each element, so they can be computed either analytically or using the local SBP operators.

The second point that needs clarifying is the scaling function that appears in LPS.  For dimensional consistency, I use the magnitude of the advection velocity in reference space:
\begin{equation*}
\fnc{A}(\xi,\eta) = \sqrt{\lambda_\xi^2 + \lambda_\eta^2},
\end{equation*}
where
\begin{align*}
\lambda_\xi &= \fnc{J} \lambda_x \partial_x \xi + \fnc{J} \lambda_y \partial_y \xi = \phm\lambda_x \partial_\eta y - \lambda_y \partial_\eta x, \\
\lambda_\eta &= \fnc{J} \lambda_x \partial_x \eta + \fnc{J} \lambda_y \partial_y \eta = -\lambda_x \partial_\xi y + \lambda_y \partial_\xi x.
\end{align*}
Again, since the Jacobian is constant over each element, the scalar $\fnc{A}(\xi,\eta)$ can either be computed analytically or using the SBP operators.

Finally, the C-SBP and D-SBP semi-discretizations are both advanced in time using the classical fourth-order Runge-Kutta method.  I chose the time-step size for each discretization using the results of the eigenvalue spectra study, presented next.

\subsubsection{Investigation of operator spectrum}

The eigenvalues of the spatial operators are plotted in Figure~\ref{fig:eigs}.  These spectra are from discretizations on the level 2 mesh shown in Figure~\ref{fig:mesh_lev2}.

I have scaled each set of eigenvalues by the corresponding spectral radius, so the axes can use the same scale.  To indicate the relative size of the spectral-radius scaling, each figure includes the effective time step, $\Delta t$, that the discretization would need to take relative to the C-SBP $p=1$ scheme.   For example, if $\rho_1^{\textsf{C}}$ is the spectral radius of the C-SBP $p=1$ scheme, and $\rho_3^{D}$ is the spectral radius of the D-SBP $p=3$ scheme, than the effective time step is
\begin{equation*}
\Delta t = \frac{\rho_1^{\textsf{C}}}{\rho_3^{\textsf{D}}} \Delta t_{\text{ref}} \approx 0.26 \Delta t_{\text{ref}}.
\end{equation*}
For $p=1$, $p=2$, and $p=3$ the C-SBP discretizations have larger effective time steps.  For $p=4$ the D-SBP scheme has a slightly larger time step of $0.22\Delta t_{\text{ref}}$ versus $0.2\Delta t_{\text{ref}}$.

Qualitatively, the spectra of the C-SBP and D-SBP are quite distinct.  As $p$ increases, the C-SBP spectra cluster closer and closer to the imaginary axis.  In contrast, the D-SBP spectra remain clustered within a disk-shaped region.  This suggests that the C-SBP discretization may approximate the infinite-dimensional spectrum, which is pure imaginary, more efficiently than the D-SBP discretization.

\begin{figure}[tbp]
  \renewcommand{\thesubfigure}{}
  \subfigure[$p=1$ (C-SBP) \label{fig:eigCG_p1}]{%
    \includegraphics[width=0.24\textwidth]{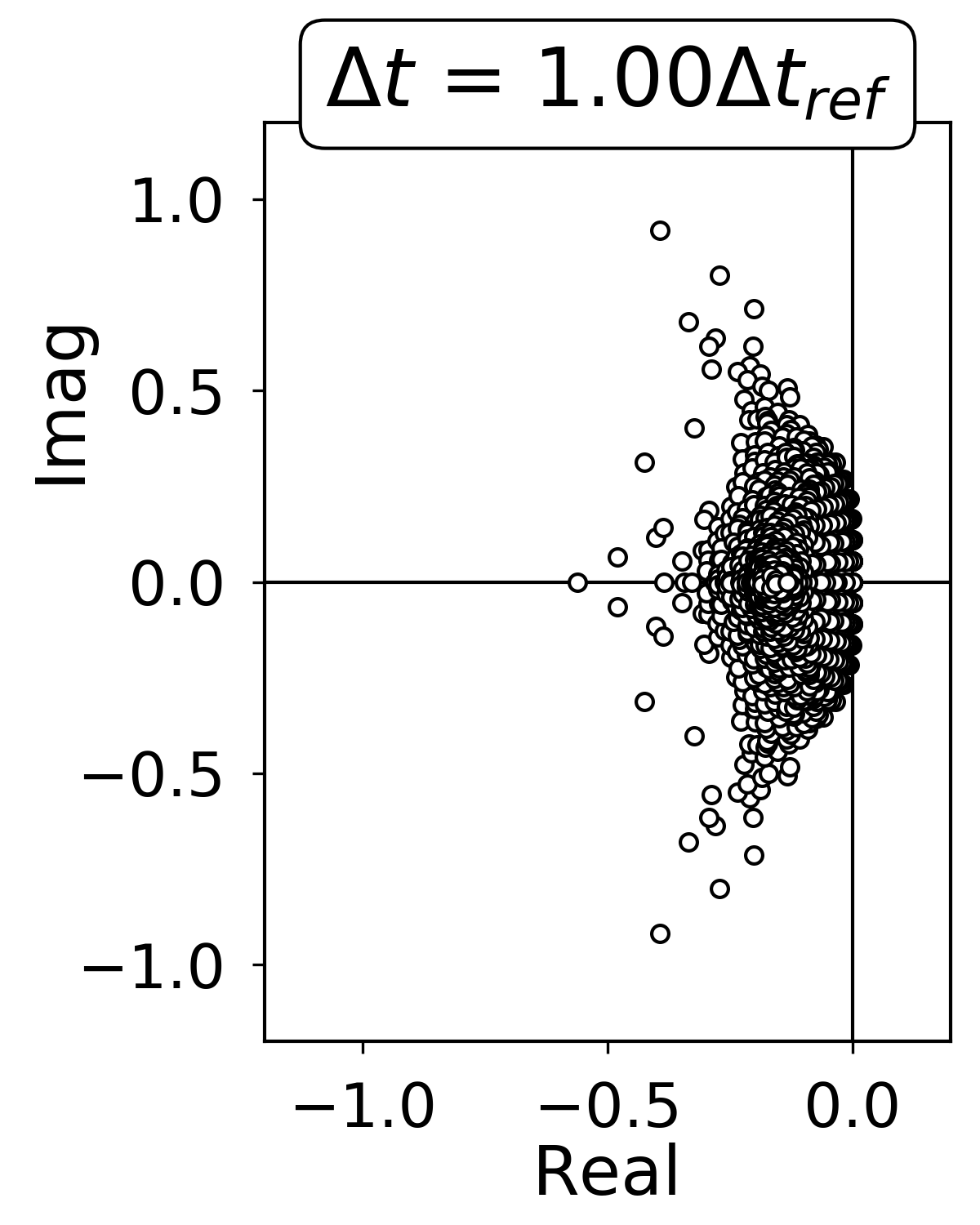}}
  \subfigure[$p=2$ (C-SBP) \label{fig:eigCG_p2}]{%
        \includegraphics[width=0.24\textwidth]{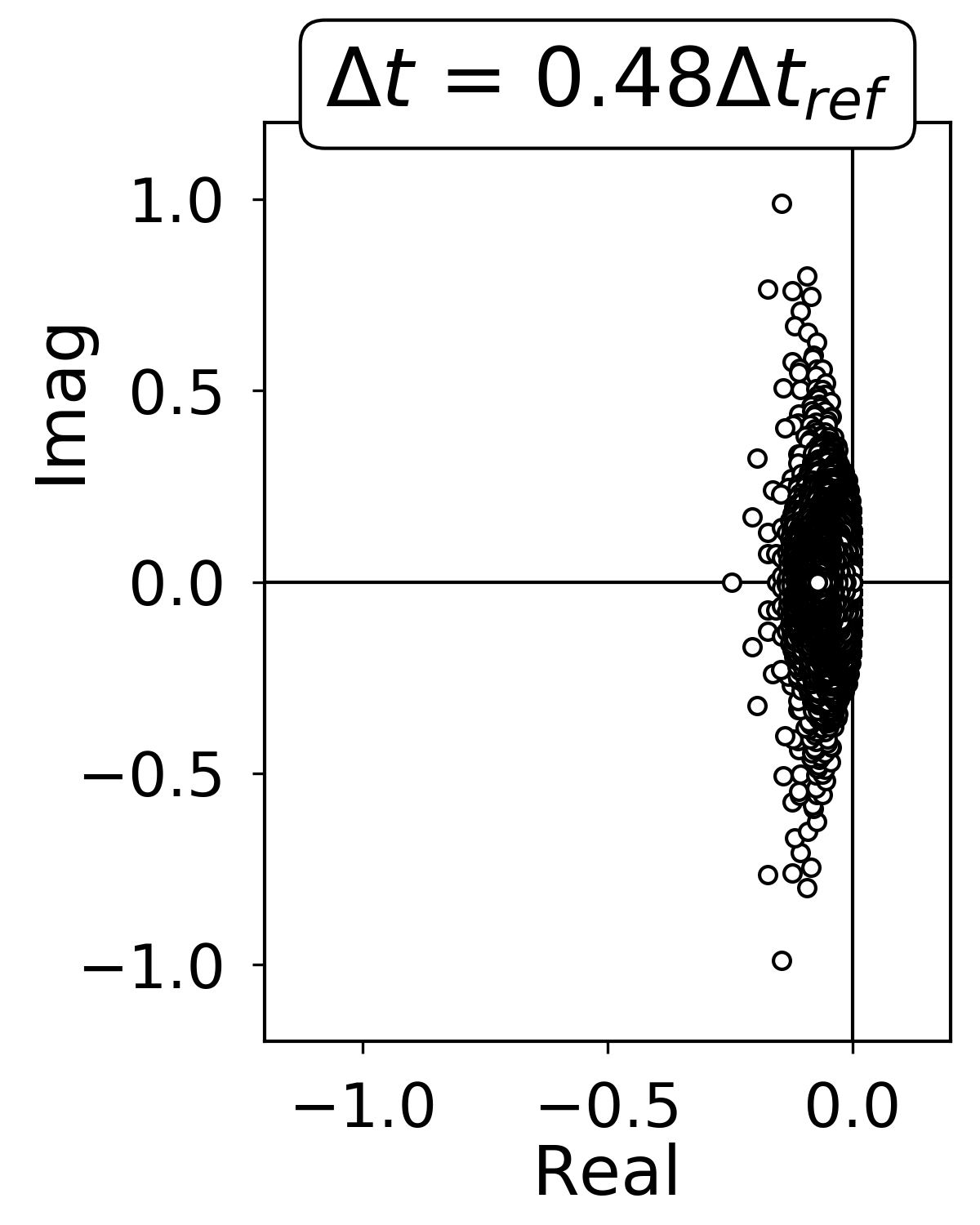}}
  \subfigure[$p=3$ (C-SBP) \label{fig:eigCG_p3}]{%
        \includegraphics[width=0.24\textwidth]{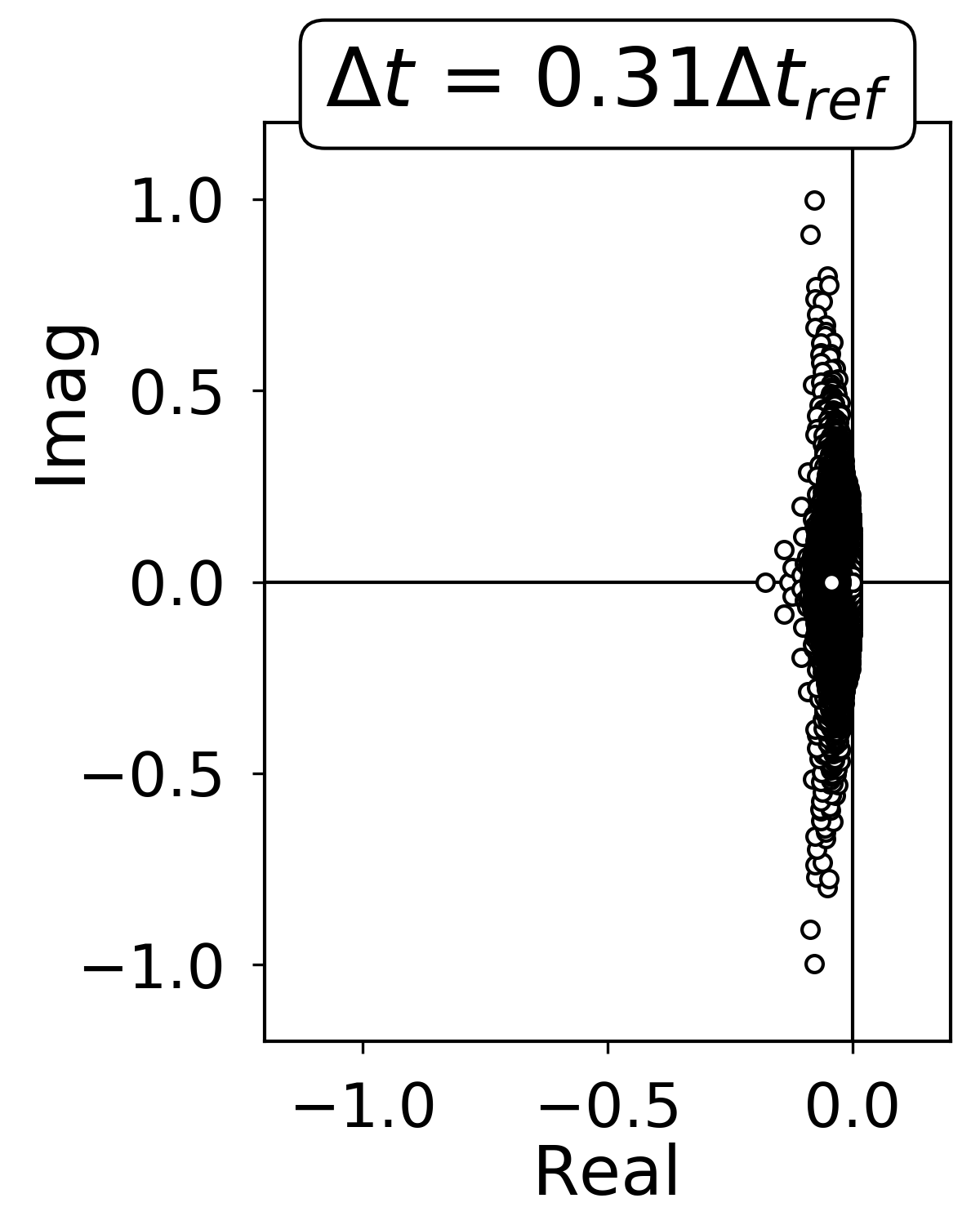}}
  \subfigure[$p=4$ (C-SBP) \label{fig:eigCG_p4}]{%
        \includegraphics[width=0.24\textwidth]{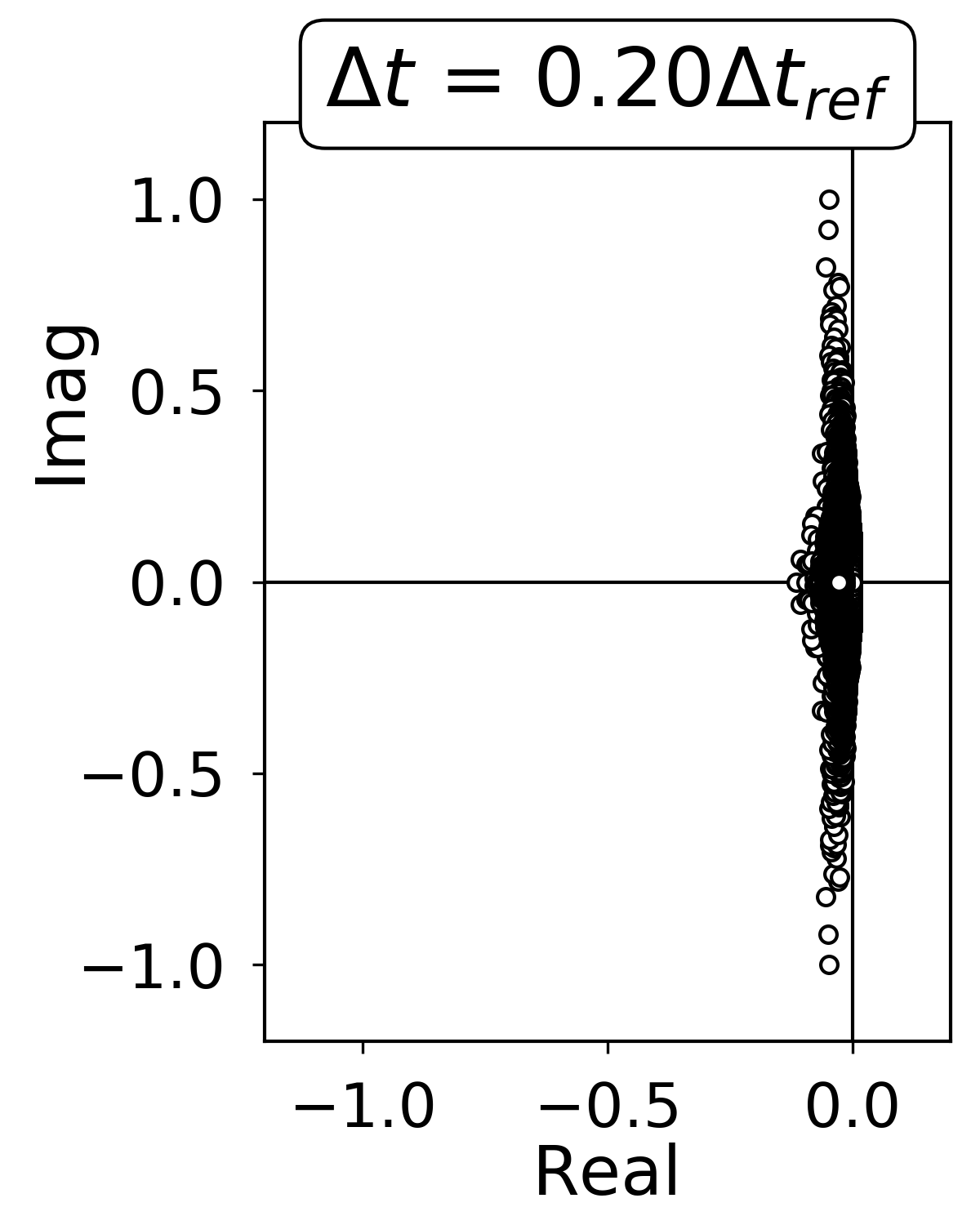}}\\
  \subfigure[$p=1$ (D-SBP) \label{fig:eigDG_p1}]{%
    \includegraphics[width=0.24\textwidth]{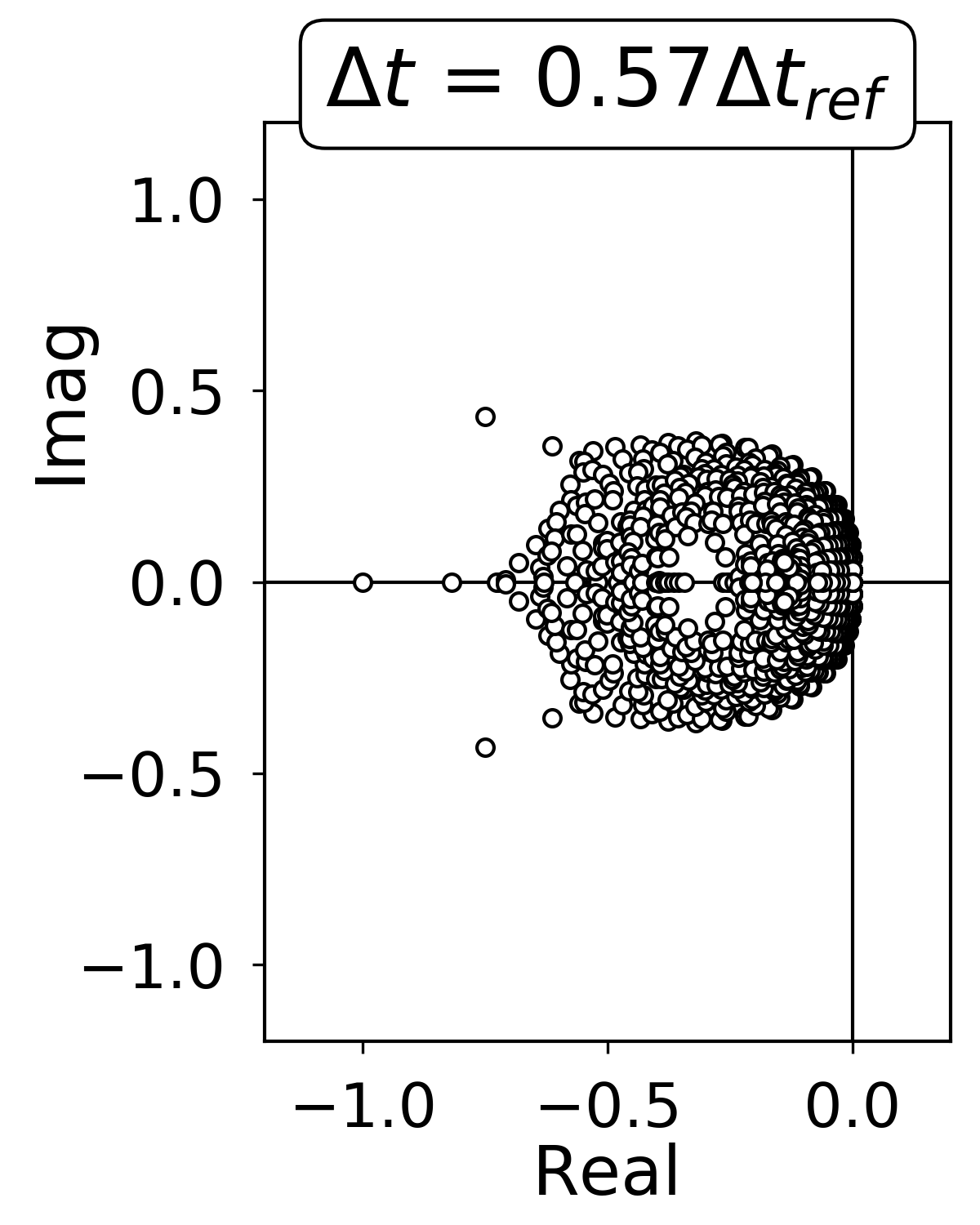}}
  \subfigure[$p=2$ (D-SBP) \label{fig:eigDG_p2}]{%
        \includegraphics[width=0.24\textwidth]{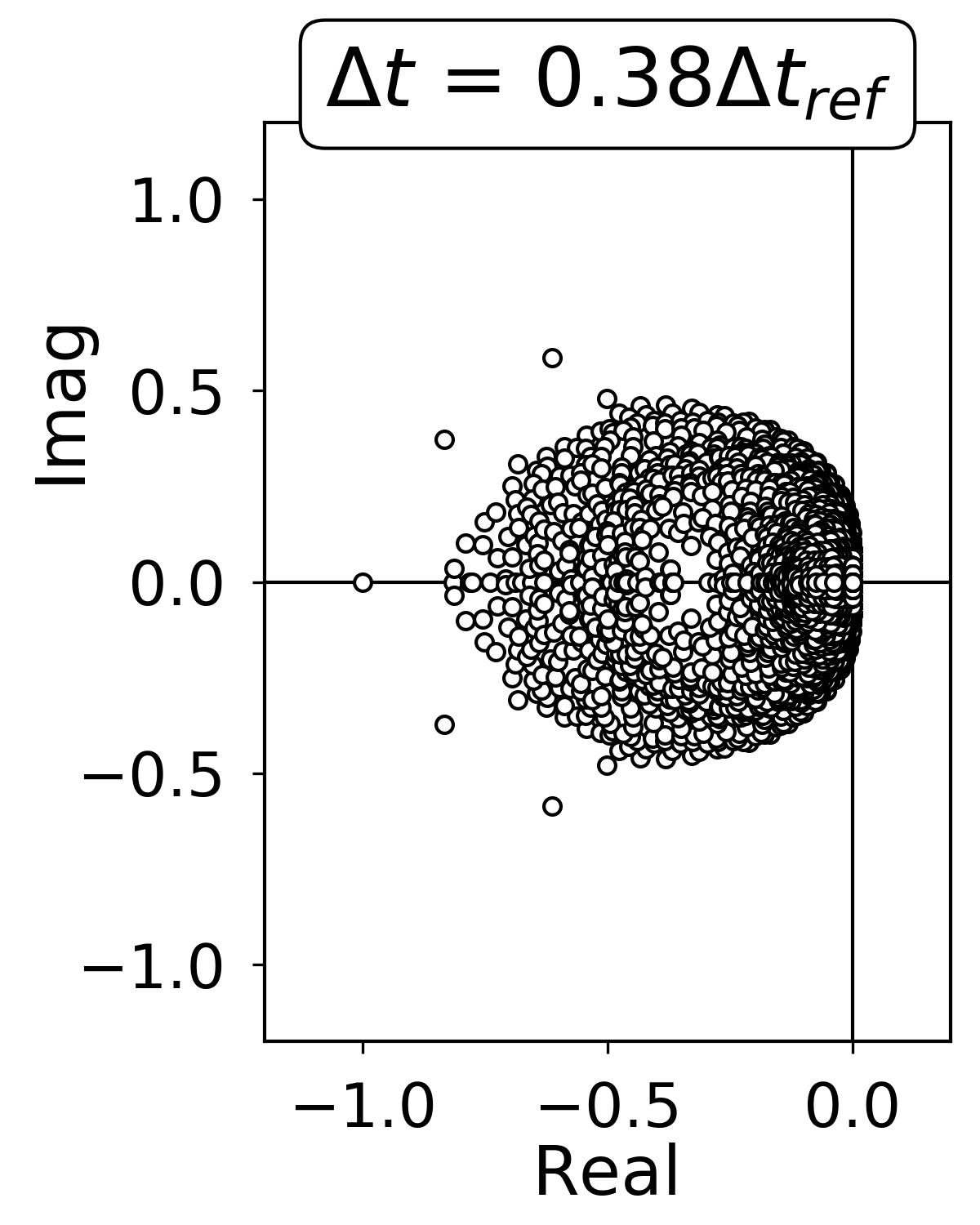}}
  \subfigure[$p=3$ (D-SBP) \label{fig:eigDG_p3}]{%
        \includegraphics[width=0.24\textwidth]{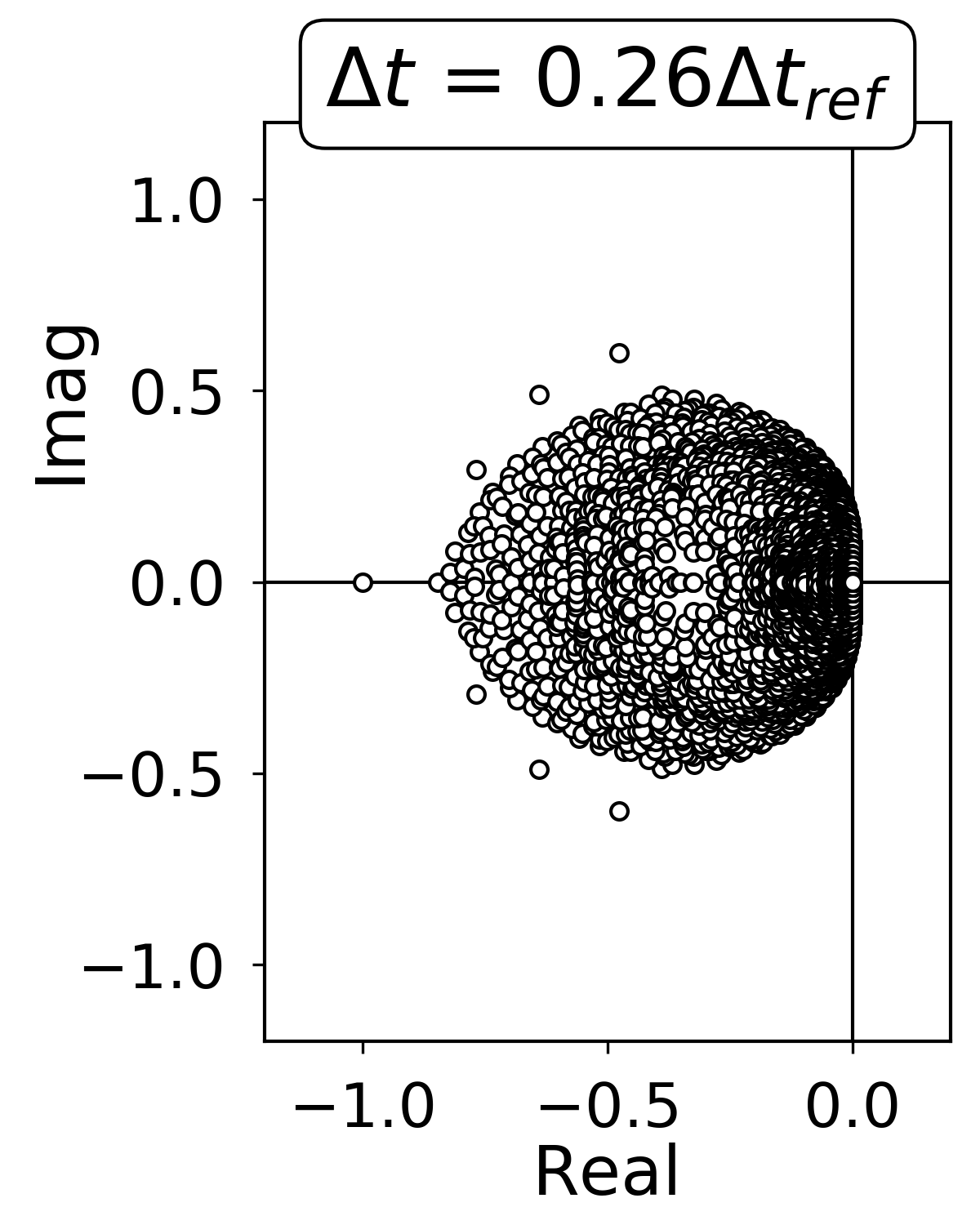}}
  \subfigure[$p=4$ (D-SBP) \label{fig:eigDG_p4}]{%
        \includegraphics[width=0.24\textwidth]{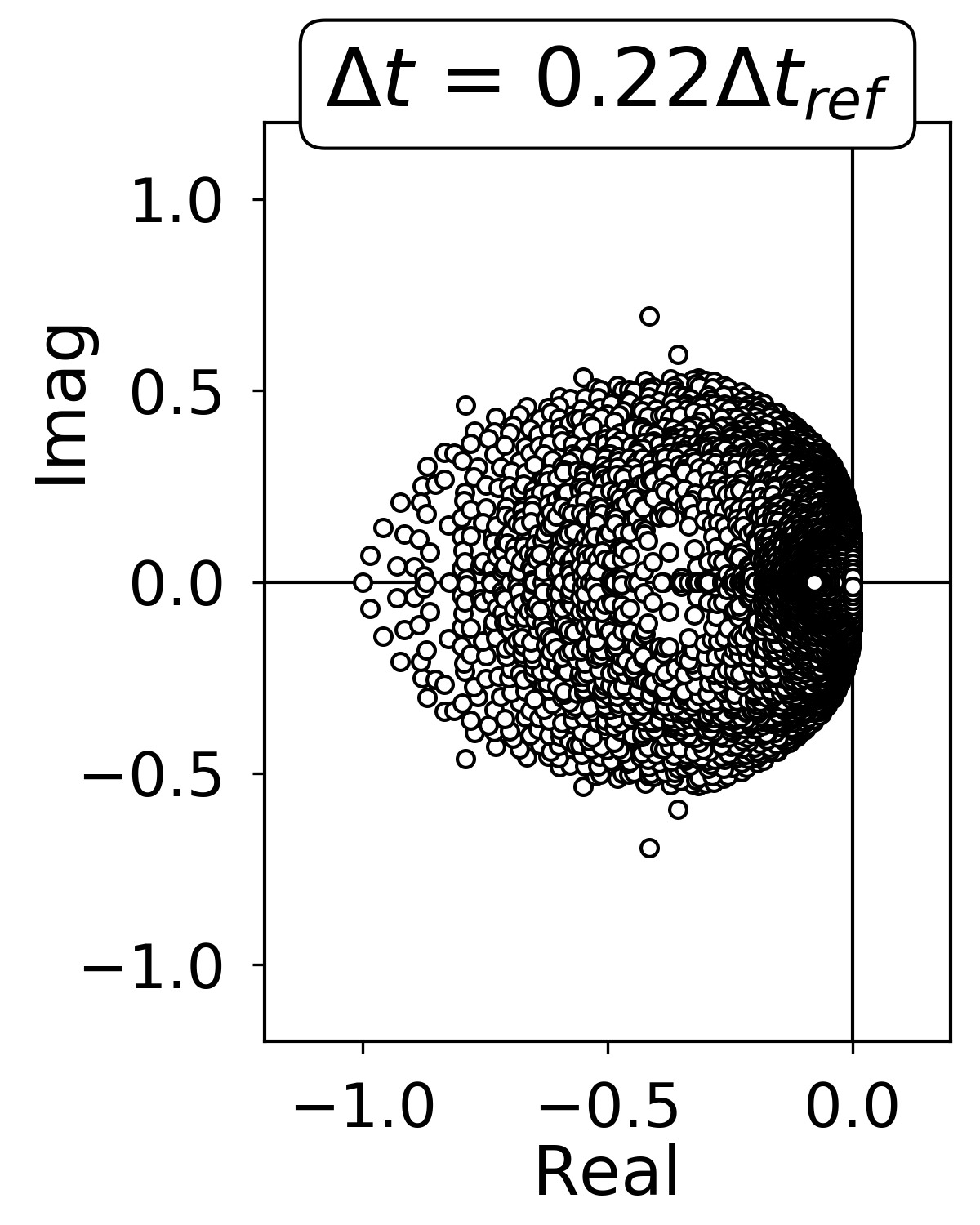}}\\
  \caption{Eigenvalue distributions of the C-SBP (upper row) and D-SBP (lower row) spatial discretizations for the linear advection problem.\label{fig:eigs}}
\end{figure}

\subsubsection{$L^2$ accuracy study}

I used all four mesh levels to conduct a mesh convergence study for the linear-advection equation.  The objective of this study was to verify that the C-SBP discretization achieves optimal, or near optimal, rates of convergence in the $L^2$ norm.  Furthermore, I wanted to investigate the relative error between the C-SBP and D-SBP schemes on the same mesh.

The equations were marched forward in time using the classical fourth-order Runge-Kutta method.  The spectra in the previous section were used to identify the maximum stable time step for each discretization.  Although the spectra were gathered for only one mesh, I observed that the ratio between the reference spectral radius, $\rho_1^{\textsf{C}}$, and the other spectral radii remained roughly constant across mesh levels.  Thus, for a given mesh level and discretization, the maximum stable time step can be determined from the ratios in Figure~\ref{fig:eigs} and the maximum reference time step for the C-SBP $p=1$ scheme.  For example, if $\Delta t_{\text{ref}} = 0.1$, then $\Delta t = 0.026$ for the D-SBP $p=3$ scheme.

The spectral radius was not available for the C-SBP $p=1$ discretization on the finest mesh; therefore, in order to estimate the maximum reference time step, \ie $\Delta t_{\text{ref}}$, I fit the following model for the spectral radius:
\begin{equation*}
\rho_{1}^{\textsf{lev}} = \frac{\sqrt{\lambda_x^2 + \lambda_y^2}}{h_{\text{ref}}(\textsf{lev})} a b^{\textsf{lev}}
= \sqrt{2} a (3 b)^{\textsf{lev}},
\end{equation*}
where $\textsf{lev}$ is the mesh level, and $h_{\text{ref}}(\textsf{lev}) = (1/3)^{\textsf{lev}}$ is a nominal element size for the kernel-based mesh: each edge is split into thirds during refinement, hence the factor of $1/3$.  I used the spectral radii from the $\textsf{lev}=2$ and $\textsf{lev}=3$ meshes to fit the above model and found that $a=2.0743$ and $b=2.0758$.  The resulting fit predicts $\rho_{1}^{1} = 17.614$ for the $p=1$ discretization on the level 1 mesh, which is a 6\% error from the true spectral radius of $18.268$.

I advanced the solution in time one period, to $t=1$, which brings the bell-shaped solution back to its initial position.  The $L^2$ error between the numerical solution at $t=1$ and the initial condition was then evaluated.  I used the SBP norm matrices $\Hk$ and Jacobian determinant $J_\kappa$ to approximate the integrals in the $L^2$ norm of the error.

Figures~\ref{fig:accuracy_CSBP} and Figures~\ref{fig:accuracy_DSBP} plot the $L^2$ error versus the nominal element size $h_{\text{ref}}(\textsf{lev}) = (1/3)^{\textsf{lev}}$ for the C-SBP and D-SBP discretizations.  For a given $p$ and mesh size, the errors for the two discretizations are comparable.  The asymptotic convergence rates are estimated using the error on the finest two grid levels and are displayed beneath the rate triangles in Figure~\ref{fig:accuracy_adv}.

\begin{figure}[t]
  \subfigure[C-SBP \label{fig:accuracy_CSBP}]{%
    \includegraphics[width=0.46\textwidth]{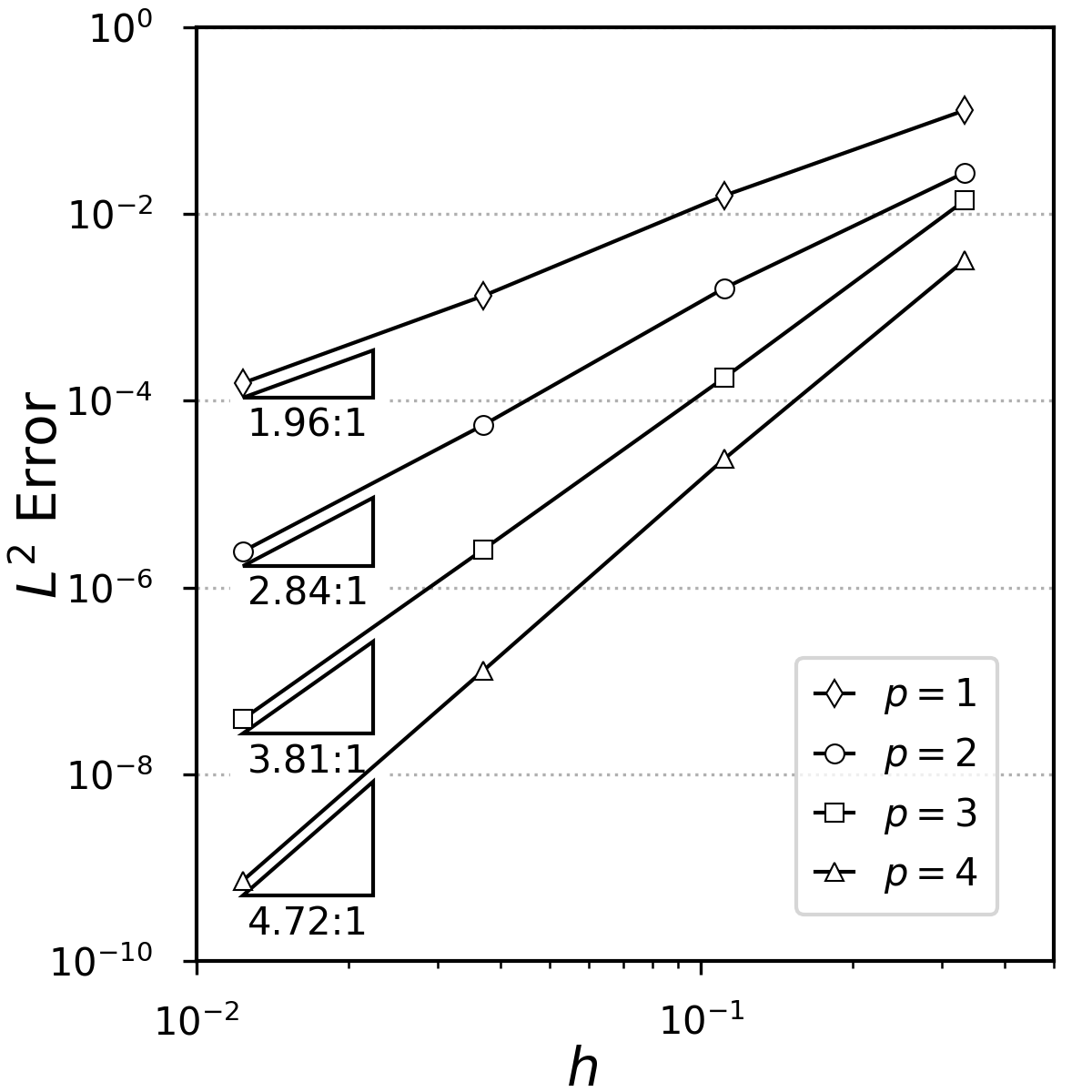}}\hfill
  \subfigure[D-SBP \label{fig:accuracy_DSBP}]{%
        \includegraphics[width=0.46\textwidth]{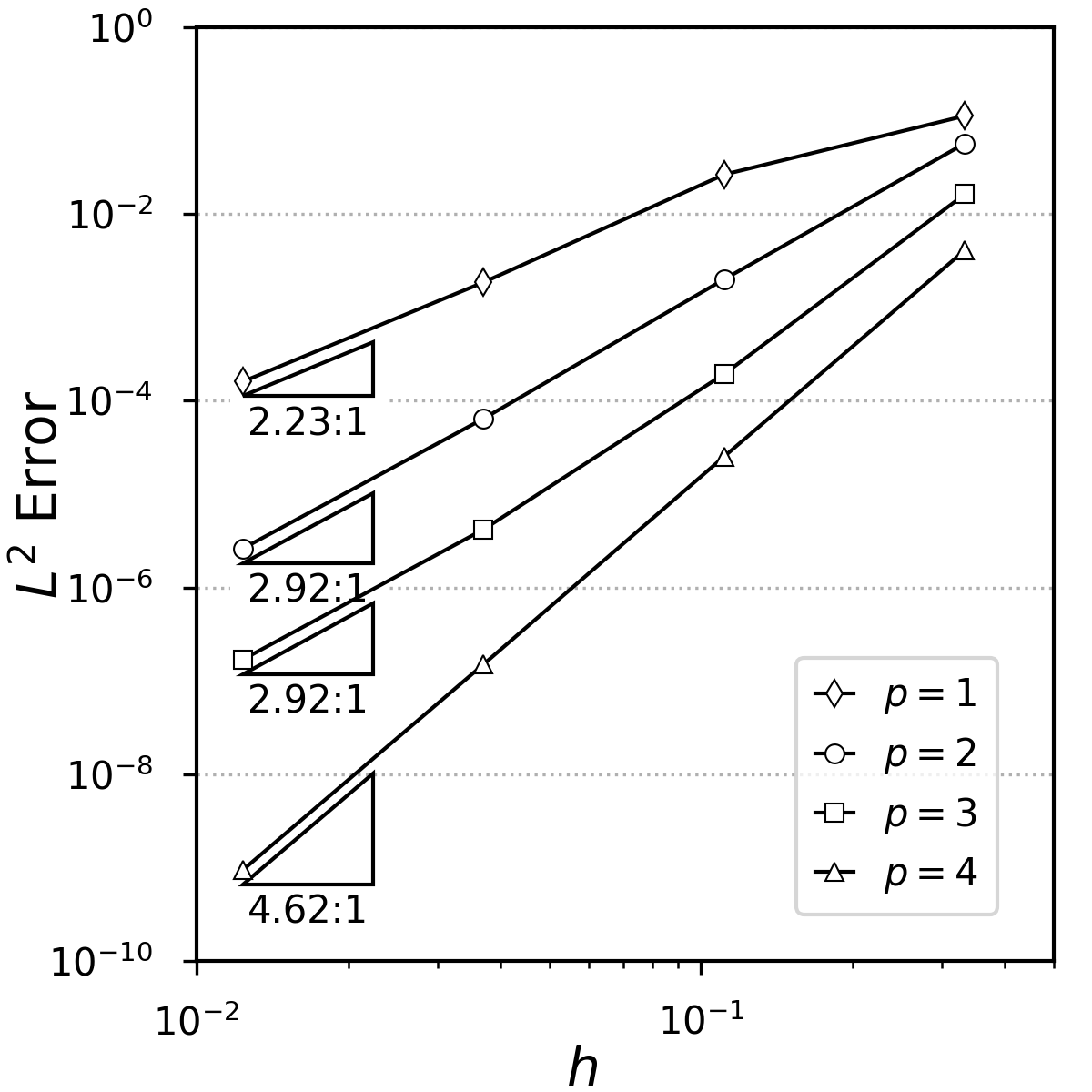}}
  \caption{$L^2$ solution error after one period versus nominal element size. \label{fig:accuracy_adv}}
\end{figure}

\subsubsection{Efficiency study}

To assess the relative efficiency of the C-SBP and D-SBP discretizations, I have plotted the $L^2$ solution error versus normalized CPU time in Figure~\ref{fig:efficiency}.  The times are normalized by the C-SBP $p=1$ discretization on the coarsest mesh.  The Julia code that I wrote to gather these results used pre-allocated work arrays to avoid expensive memory allocation, and was run on Julia version 0.6.2 with array-bound checking turned off.

The results in Figure~\ref{fig:efficiency} indicate that the C-SBP discretizations are more efficient than the D-SBP discretizations up to polynomial degree $p=4$, at which point the two schemes are comparable.  Admittedly, this is only one simple test case, and it is likely that improvements could be made to both schemes.  Furthermore, these results do not shed much light on the relative efficiency of the discretizations in the context of implicit time-marching schemes, viscous terms, and three-dimensional problems.  Nevertheless, the results in Figure~\ref{fig:efficiency} suggest that, at the very least, C-SBP schemes warrant further investigation.

\begin{figure}[t]
  \begin{center}
    \includegraphics[width=\textwidth]{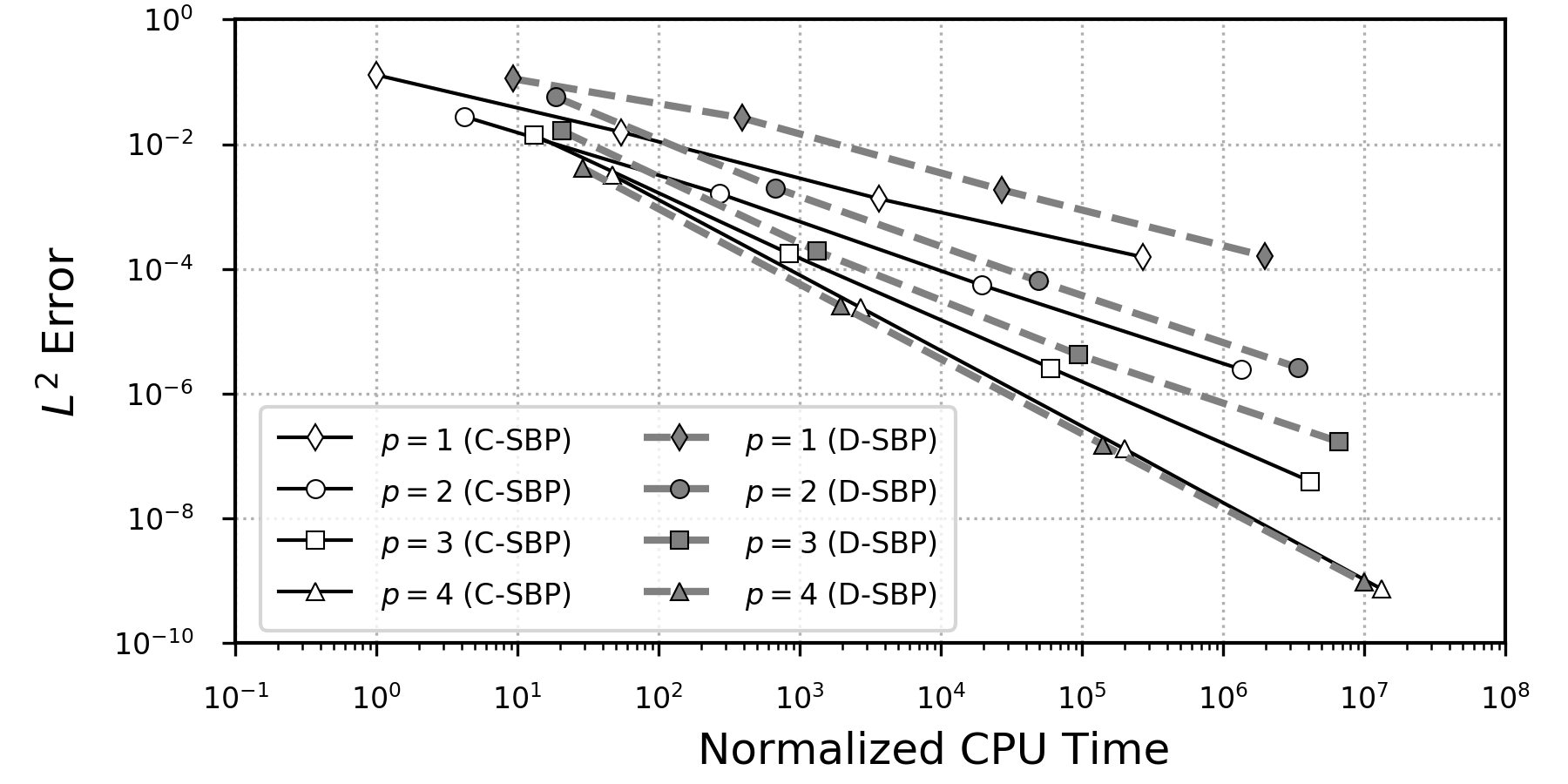}
      \caption{$L^2$ solution error after one period versus normalized CPU time. \label{fig:efficiency}}
  \end{center}
\end{figure}

\subsection{Euler equations}

The remaining numerical experiments are used to assess the C-SBP entropy-stable discretization of the Euler equations described in Section~\ref{sec:euler}.  In particular, I am interested in verifying accuracy in the context of a nonlinear system of equations, as well as verifying entropy conservation and stability.

The following studies use curvilinear elements, in contrast to the affine elements adopted for the linear-advection studies.  To ensure entropy conservation and stability, I evaluated the mapping Jacobian as described in \cite{Crean2018entropy}.  In fact, since I considered exclusively two-dimensional meshes, I used $p+1$ Lagrange elements to define the coordinate transformation and computed the metrics analytically.  I then used equations (23)--(25) from \cite{Crean2018entropy} to define the SBP operators on each element.  The resulting operators satisfy the theoretical requirements for entropy conservation and stability, but they not satisfy the polynomial-exactness condition \ref{sbp:accuracy} in Definition~\ref{def:sbp}; with the exception of constant functions, the operators differentiate polynomials only in an asymptotic sense as the mesh is refined.  This motivates the accuracy study, which I discuss next.

\subsubsection{Accuracy verification using the steady vortex}

The steady isentropic vortex is an exact, smooth solution to the Euler equations, which makes it suitable for verifying the accuracy of the C-SBP entropy-stable discretization.  The vortex flow has circular symmetry about the origin.  Specifically, its streamlines are concentric circles and its density is given by
\begin{equation*}
\rho(r) = \rho_{\textsf{in}} \left[ 1 + \frac{\gamma -1}{2} M_{\textsf{in}}^2 \left( 1 - \frac{r_{\textsf{in}}^2}{r^2}\right) \right]^{\frac{1}{\gamma -1}},
\end{equation*}
where $r$ denotes the radial polar coordinate; $r_{\textsf{in}} = 1$ is a reference radius, and $\rho_{\textsf{in}} = 2$ and $M_{\textsf{in}} = 0.95$ are the density and Mach number at $r_{\textsf{in}}$, respectively.  The remaining conservative variables can be obtained using the isentropic relations.

The domain for the steady-vortex problem is the quarter annulus $\Omega = \{ (r,\theta) \;|\; 1 \leq r \leq 3, 0 \leq \theta \leq \pi/2 \}$.  I applied an inviscid ``slip'' boundary condition, $\rho u n_x + \rho v n_y = 0$, along the inner radius, $r=r_{\textsf{in}} = 1$.  On the remaining boundaries, I provided the exact solution to characteristic-type boundary conditions.  Both boundary conditions are implemented in a dual-consistent manner~\cite{Lu2005posteriori,Hartmann2007adjoint,Hicken2014dual}, which, as we shall see, is important for functional accuracy.

\begin{remark}
While the boundary conditions are dual consistent, they are not entropy stable.  Entropy-stable boundary conditions have been proposed~\cite{Svard2013entropy}, but they are not needed for this steady problem.
\end{remark}

I created the meshes for $\Omega$ by uniformly dividing the domain into $N\times N$ quadrilaterals in polar coordinates.  Each quadrilateral was then subdivided into two triangles.  For a degree $p$ SBP discretization, the coordinate mapping for each triangle was represented using a $p+1$ Lagrange basis with $(p+2)(p+3)/2$ uniformly spaced nodes.  The coordinate mapping was then uniquely determined by mapping the Lagrange nodes to physical space.  The resulting curvilinear elements were used to define the locations of the SBP nodes (in physical space) and the mapping Jacobian according to \cite{Crean2018entropy}.

Figure~\ref{fig:vortex_mesh} shows the $p=2$ mesh with $N=4$ edges along each boundary, and Figure~\ref{fig:vortex_density} plots the discrete density of the corresponding C-SBP discretization.  All steady-vortex solutions were obtained using Newton's method combined with a sparse direct solver.

\begin{figure}[t]
  \subfigure[example mesh ($p=2$) \label{fig:vortex_mesh}]{%
    \includegraphics[width=0.49\textwidth]{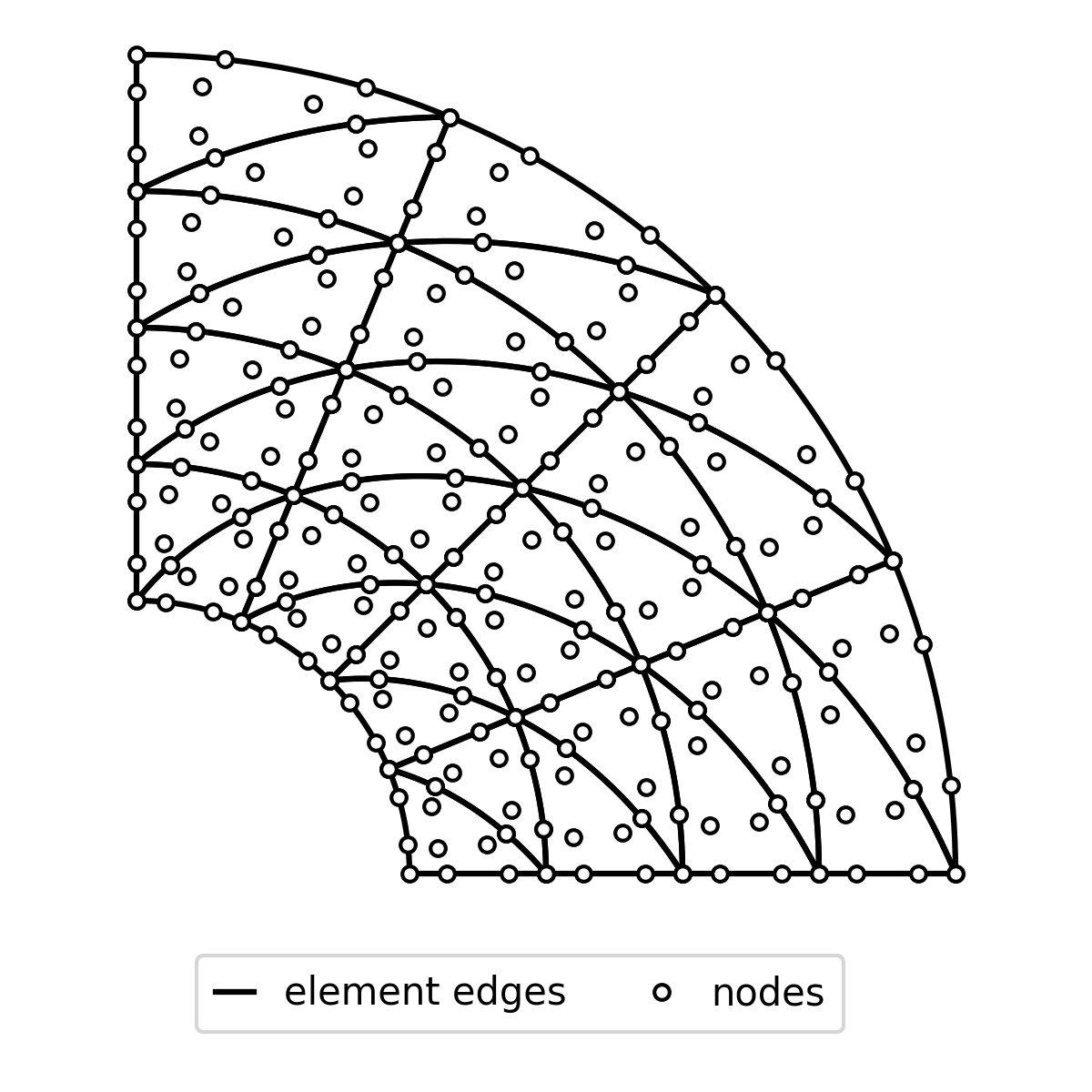}}
  \subfigure[example density ($p=2$) \label{fig:vortex_density}]{%
        \includegraphics[width=0.49\textwidth]{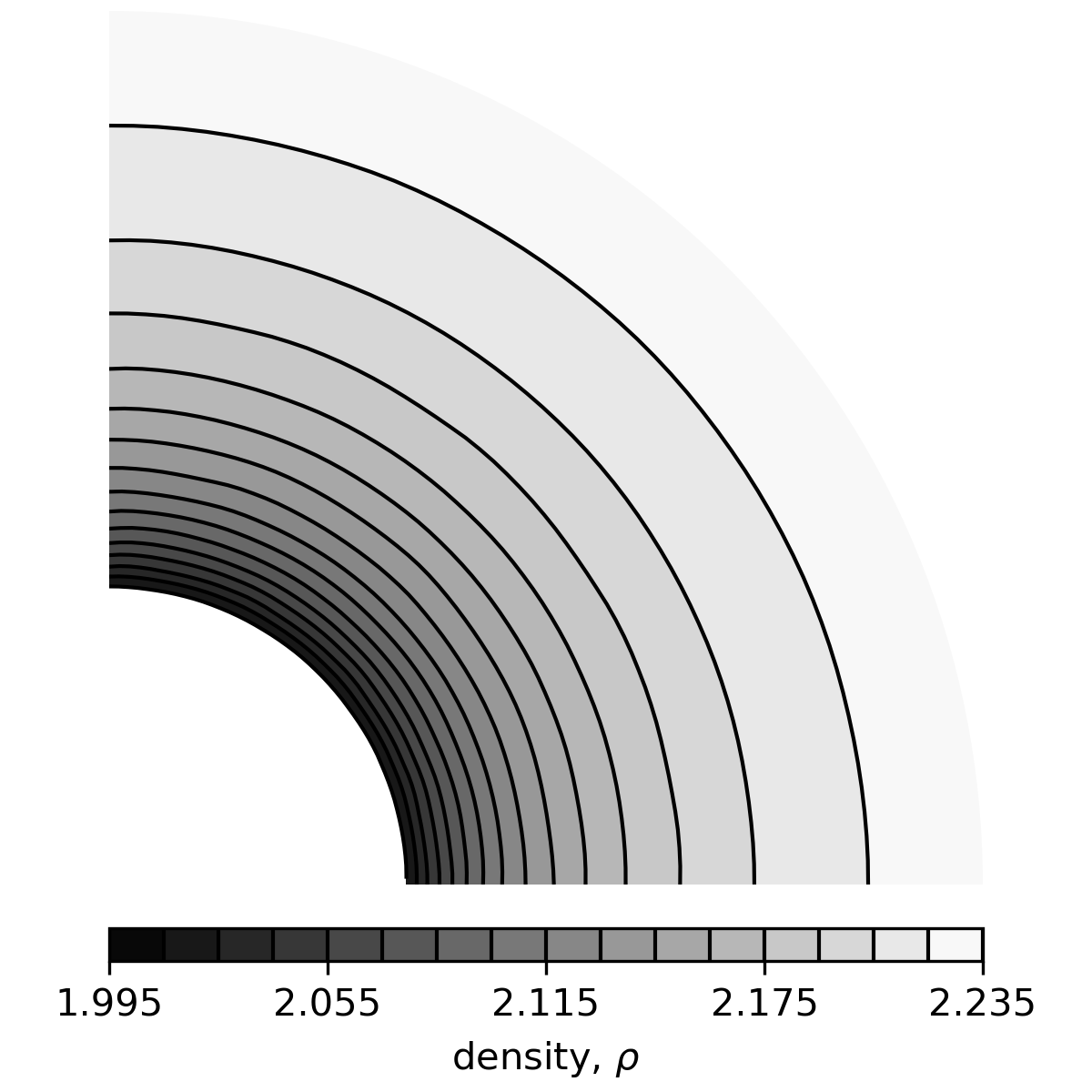}}
  \caption{Example curvilinear mesh and numerical solution for the steady-vortex accuracy study.  \label{fig:vortex}}
\end{figure}

The $L^2$ density error is plotted versus the nominal mesh size, $h = 1/N$, in Figure~\ref{fig:vortex_L2}.  I estimated the asymptotic convergence rates, which are listed under the triangles in Figure~\ref{fig:vortex_L2}, using the errors on the finest two grids.  For this problem the C-SBP schemes have rates close to $p+1$.

I conclude this accuracy study by assessing the drag force on the inner radius $r_{\textsf{in}}=1$.  Boundary functionals, such as drag, are important in many CFD applications, and a discretization is often judged based on how accurate it can estimate such outputs.  Figure~\ref{fig:vortex_drag} plots the drag error versus mesh size for the C-SBP discretizations.  For all polynomial degrees under consideration, we see that the drag is superconvergent.  For the $p=1$ and $p=2$ discretizations, the rate is approximately $2p+1$.  The asymptotic rate is less clear for the two higher-order schemes, because the drag values on the finest grid(s) are impacted by round-off errors.

\begin{figure}[t]
  \subfigure[$L^2$ density error versus $h$ \label{fig:vortex_L2}]{%
    \includegraphics[width=0.49\textwidth]{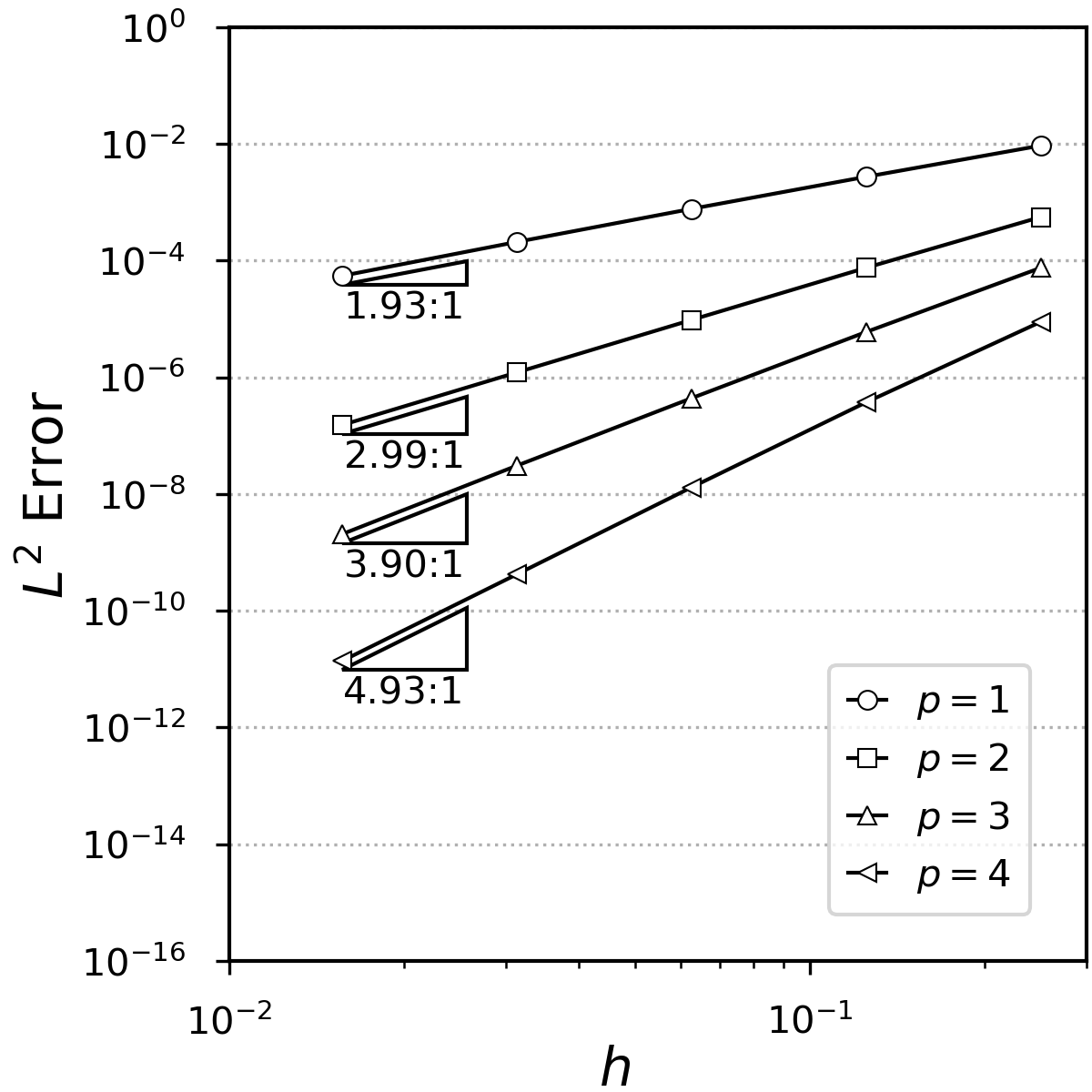}}\hfill
  \subfigure[drag error versus $h$ \label{fig:vortex_drag}]{%
        \includegraphics[width=0.49\textwidth]{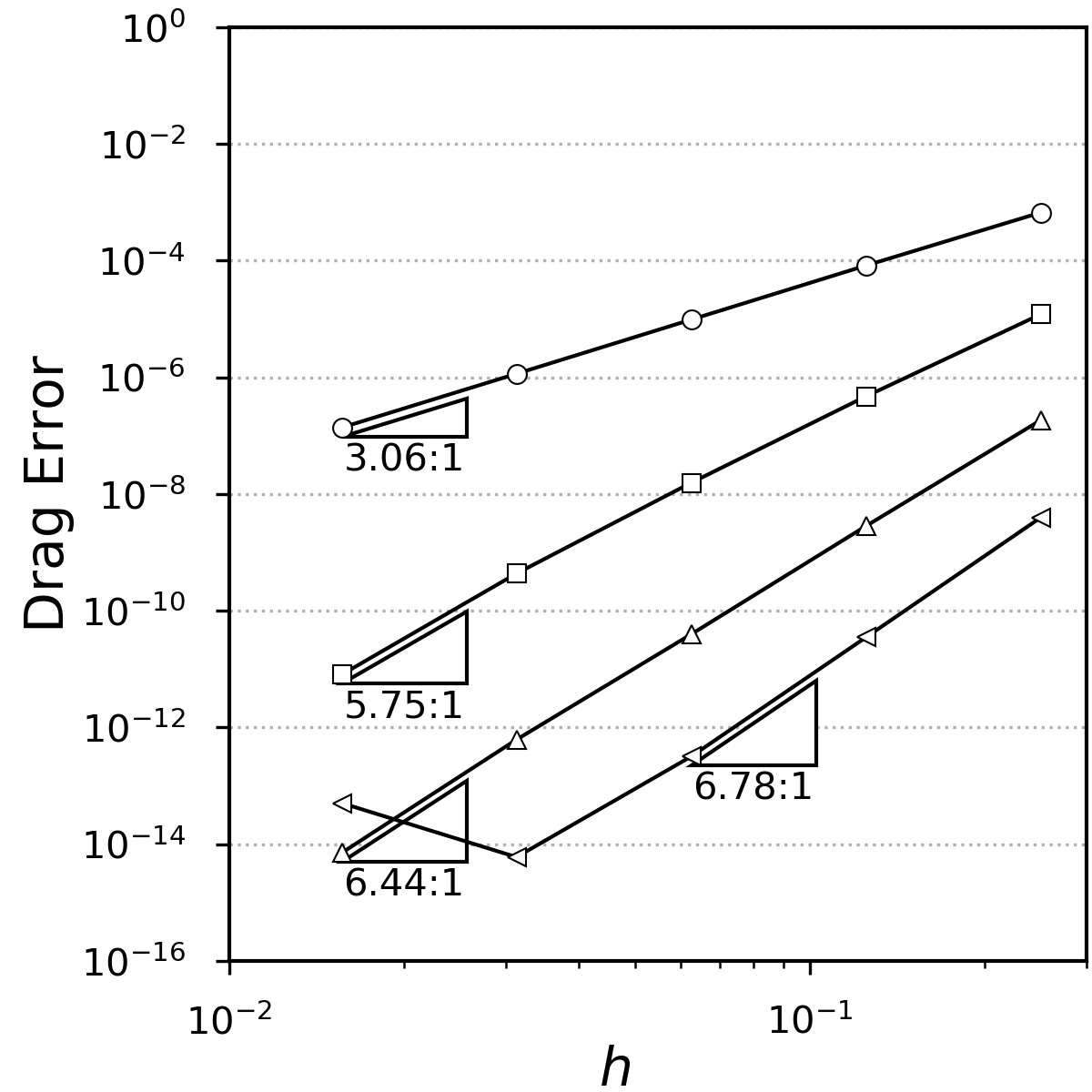}}
  \caption{  \label{fig:vortex_error}}
\end{figure}

\subsubsection{Entropy-conservation and -stability verification}

For the final set of results, I solved the discretized Euler equations on a periodic domain with a discontinuous initial condition.  The objective was to verify the entropy-conservation and entropy-stability properties of the C-SBP discretization on a non-trivial flow.  I did not consider accuracy, nor did I consider ``monotonicity'' preservation; these remain important issues in the context of discontinuous flows and will constitute future work.

The domain was the unit square, $\Omega = [0,1]^2$, with periodic boundary conditions.  The mesh generation process began by creating a $6\times 6$ uniform quadrilateral mesh in a reference space, $0\leq \xi, \eta \leq 1$.  Next, each quad was subdivided into a triangle.  As with the steady vortex case, the coordinate transformation was defined by assigning a $p+1$ Lagrange element to each triangle, and then mapping the Lagrange nodes to physical space based on the transformation
\begin{equation*}
x = \xi + \frac{1}{20} \sin(3\pi\xi)\sin(3\pi\eta),
\qquad
y = \eta - \frac{1}{20} \sin(3\pi\xi)\sin(3\pi\eta).
\end{equation*}
Figure~\ref{fig:unsteady_mesh} illustrates the $p=1$ mesh.  The meshes for the other degrees had similar shaped elements, but they obviously had more nodes per element.

The initial condition was similar to the one used in \cite{Fernandez2018staggered} and was defined by
\begin{equation*}
\bfnc{U}^T = \begin{bmatrix} \rho, & \rho u, & \rho v,& e \end{bmatrix} 
= \begin{cases}
\begin{bmatrix} 1.1, & 0, & 0, & 5.1 \end{bmatrix}, & \text{if}\; \frac{1}{3} \leq x, y  \leq \frac{2}{3}, \\[2ex]
\begin{bmatrix} 1.0, & 0, & 0, & 5.0 \end{bmatrix}, & \text{otherwise.}
\end{cases}
\end{equation*}
The analytical initial condition is shown in Figure~\ref{fig:IC_density}.  I advanced the solution in time from $t=0$ to $t=10$ non-dimensional units using the implicit midpoint rule, rather than the RK4 scheme adopted for the linear-advection study.  I found that, despite its slower rate of convergence, the midpoint rule produced a smaller entropy-conservation error for the problem and time steps that were considered.

\begin{figure}[t]
  \subfigure[example mesh ($p=1$) \label{fig:unsteady_mesh}]{%
    \includegraphics[width=0.49\textwidth]{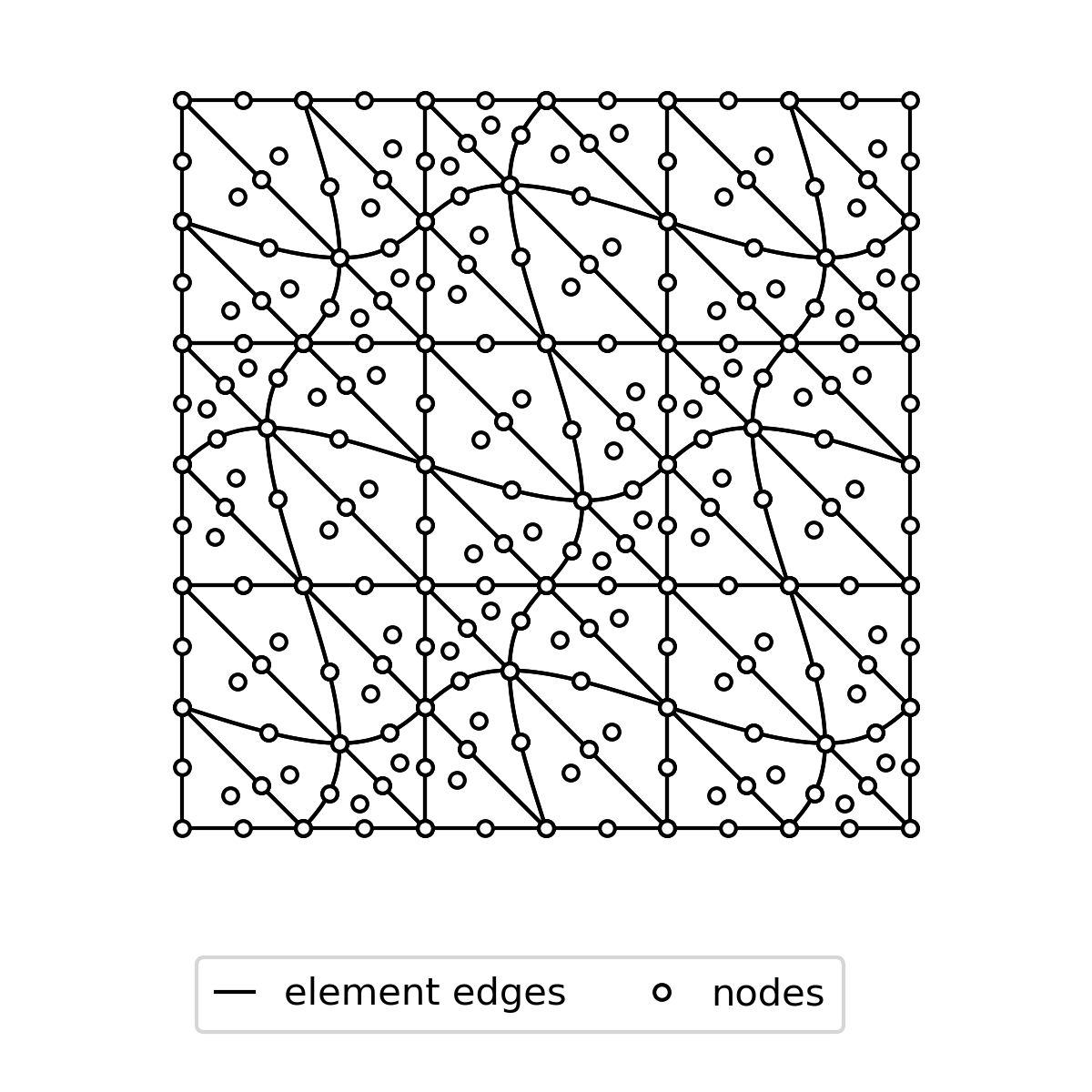}}
  \subfigure[initial density \label{fig:IC_density}]{%
        \includegraphics[width=0.49\textwidth]{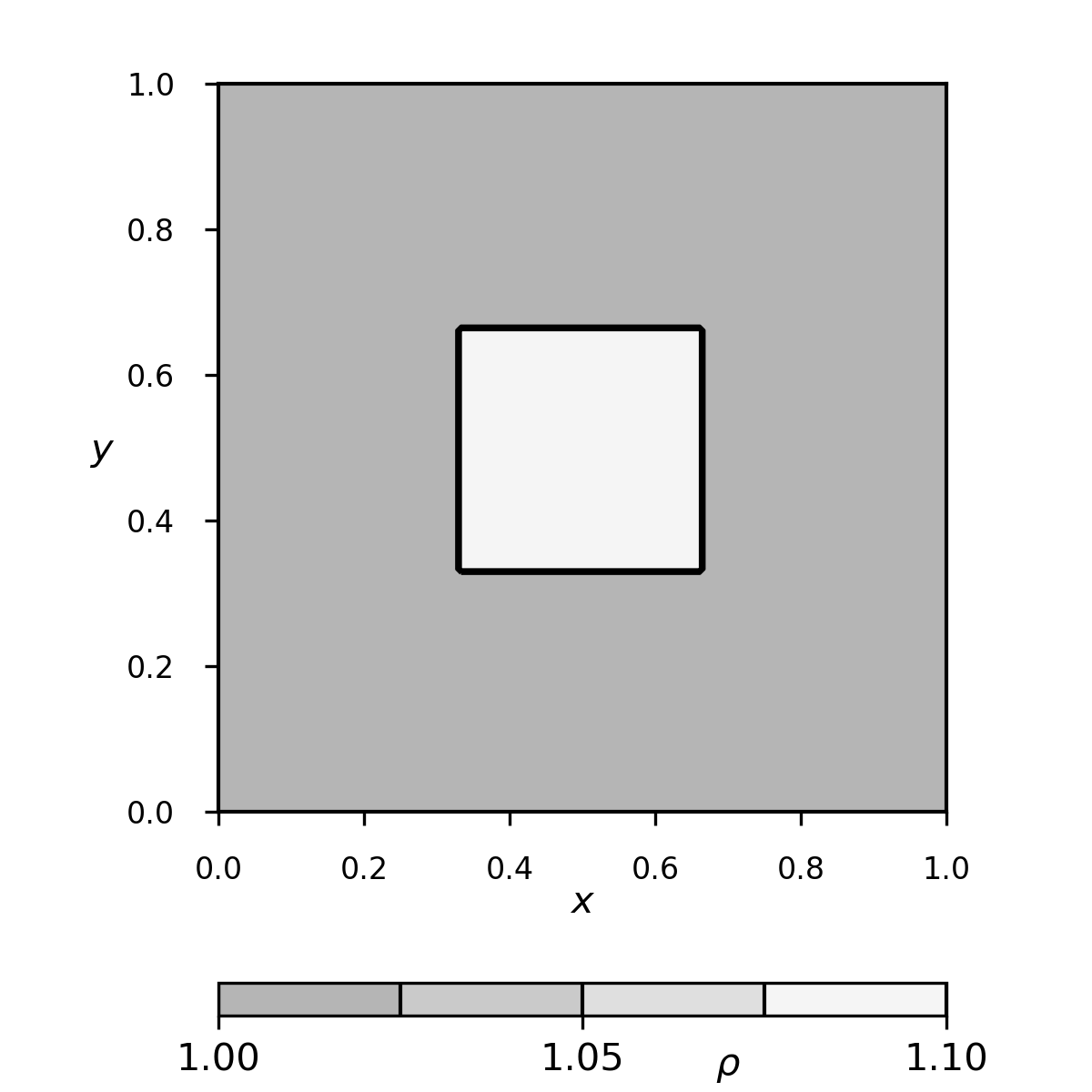}}
  \caption{Example curvilinear mesh and initial condition for the entropy-conservation and entropy-stability studies.}
\end{figure}

The set of plots in Figure~\ref{fig:entropy_cons} show the change in total entropy versus time for the entropy-conservative discretization \eqref{eq:SBPstrong}; there is one plot for each degree $p$ under consideration.  If $\bm{s}_h^{(k)}$ denotes the nodal values of the (mathematical) entropy at time step $k$, then the change in entropy is defined by
\begin{equation*}
\Delta \bm{s}_h^{(k)} \equiv \bm{1}^T \H (\bm{s}_{h}^{(k)} - \bm{s}_{h}^{(k-1)}).
\end{equation*}
Most time discretizations are not entropy conservative, so $\Delta \bm{s}_h^{(k)}$ will be non-zero even though the semi-discrete scheme \eqref{eq:SBPstrong} is entropy conservative.  This is reflected in Figure~\ref{fig:entropy_cons}, which shows that the change in entropy is indeed non-zero.  To verify that this entropy-conservation error is due to the temporal discretization, I ran the simulations using a CFL of 0.1 and 0.01.  With this change in time step size, one would expect the entropy-conservation error to decrease by two orders of magnitude, since the implicit midpoint rule is second-order accuracy.  This is confirmed by the results in Figure~\ref{fig:entropy_cons}; the magnitude in the entropy fluctuations is reduced by two orders of magnitude between CFL=0.1 and CFL=0.01, as expected.

\begin{figure}[t]
  \subfigure[ $p=1$ \label{fig:entropy_cons_p1}]{%
    \includegraphics[width=0.49\textwidth]{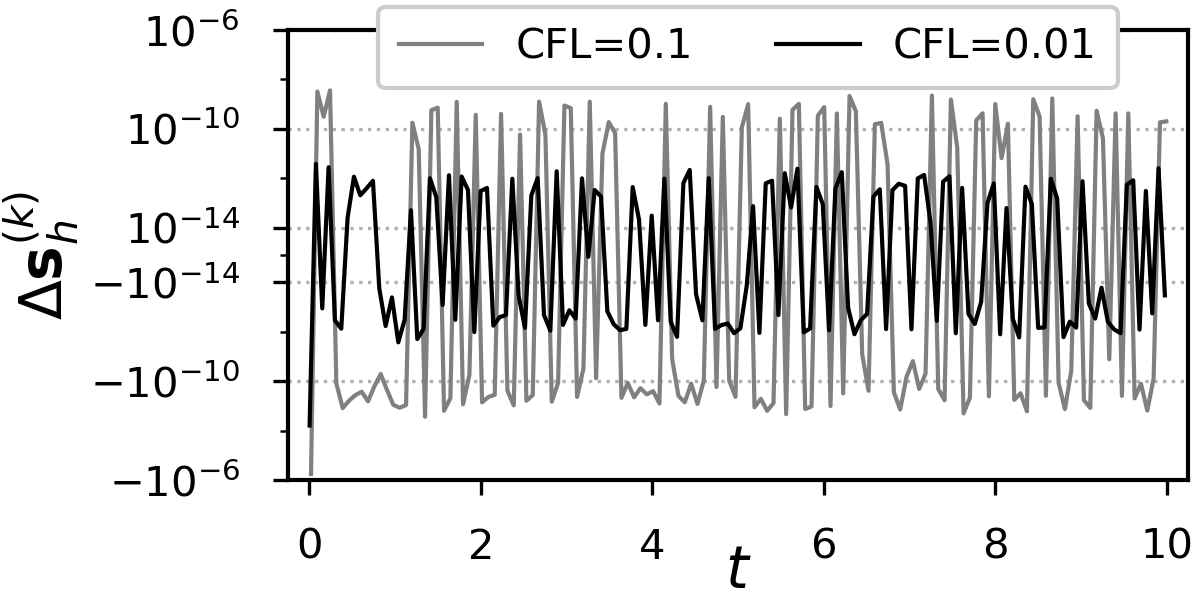}}
  \subfigure[ $p=2$ \label{fig:entropy_cons_p2}]{%
        \includegraphics[width=0.49\textwidth]{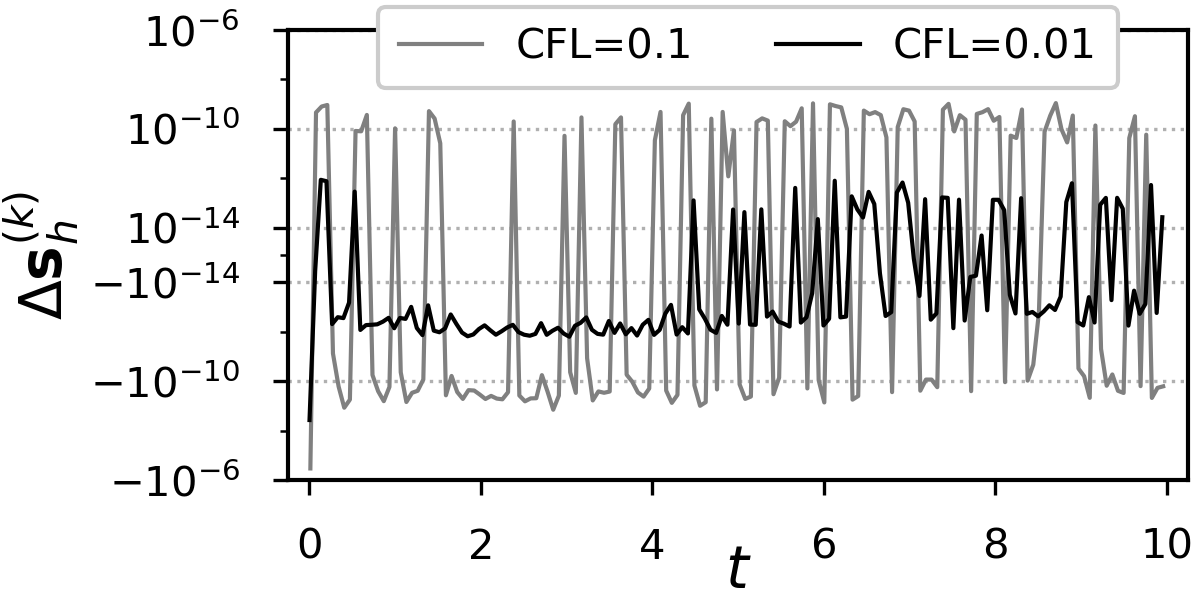}}
  \subfigure[ $p=3$ \label{fig:entropy_cons_p3}]{%
    \includegraphics[width=0.49\textwidth]{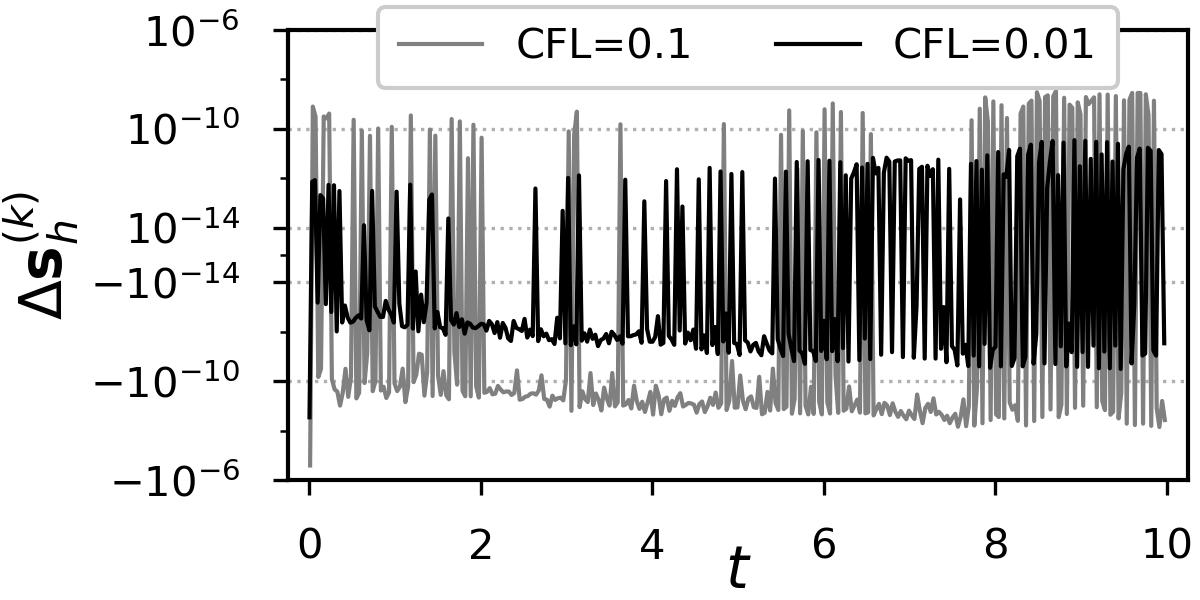}}
  \subfigure[ $p=4$ \label{fig:entropy_cons_p4}]{%
        \includegraphics[width=0.49\textwidth]{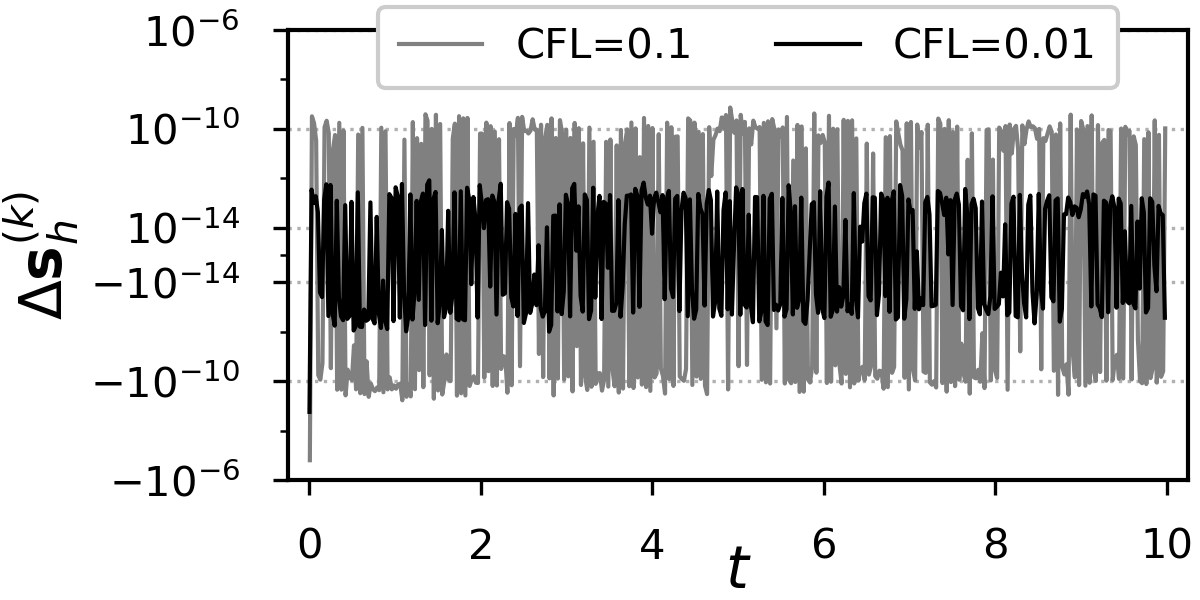}}
  \caption{Change in entropy between steps for the entropy-conservative discretizations using different CFL numbers. \label{fig:entropy_cons}}
\end{figure}

When the entropy-stable LPS terms are included, Theorem \ref{thm:ent_stab} tells us that the change in entropy should always be negative.  Again, this is only guaranteed for the semi-discrete scheme, so it is possible that a particular time discretization may lead to entropy growth.  However, this is not the case for the present simulations using the implicit midpoint rule, as Figure~\ref{fig:entropy_stab} demonstrates.  This figure plots the change in entropy from one time step to the next, and it shows that this change is always negative, so entropy is non-increasing.

\begin{figure}[t]
  \begin{center}
      \includegraphics[width=\textwidth]{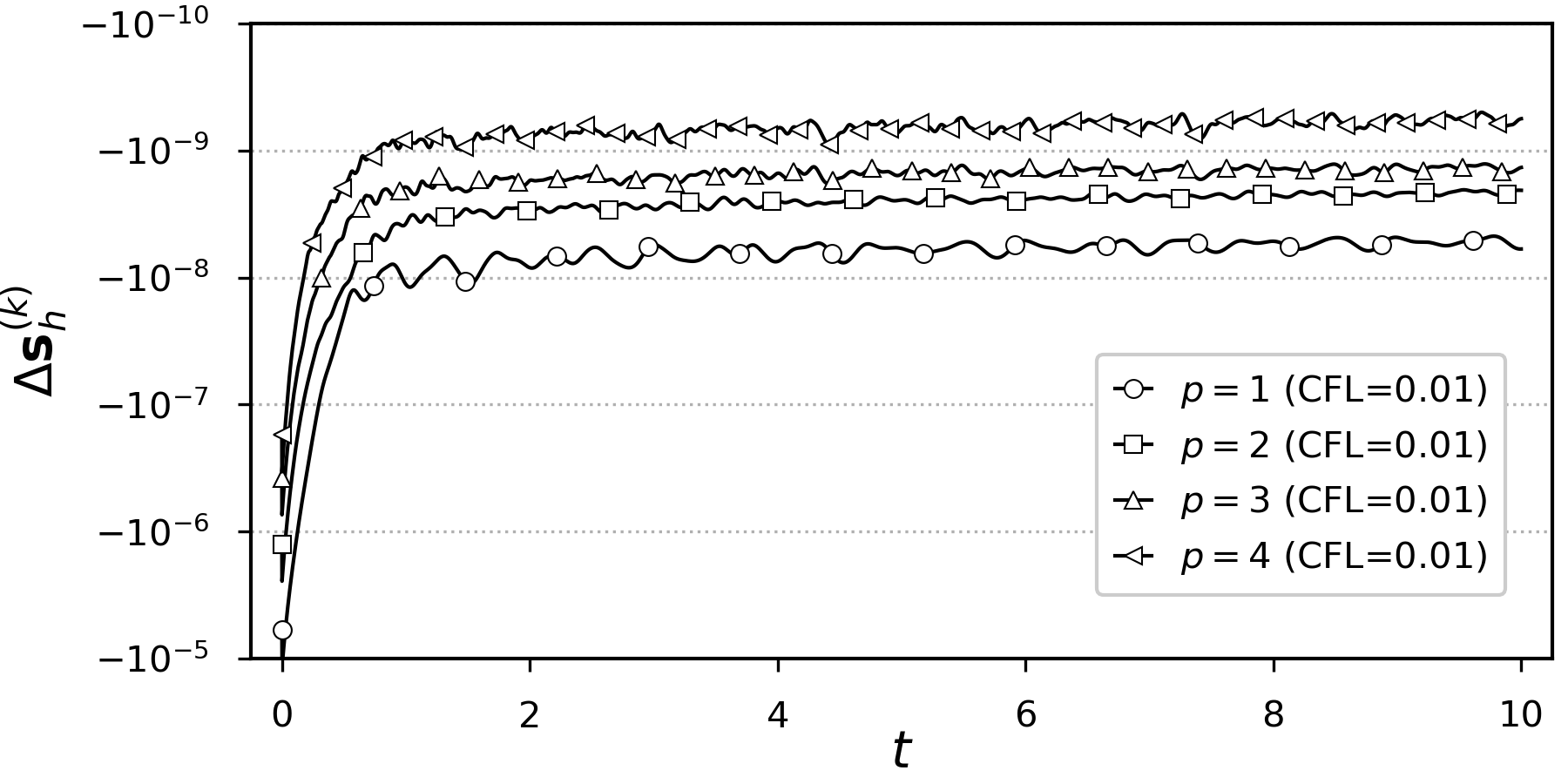}
  \caption{Change in entropy between steps for the entropy-stable C-SBP discretizations. \label{fig:entropy_stab}}
  \end{center}
\end{figure}

\section{Summary and Discussion}\label{sec:conclude}

Summation-by-parts (SBP) operators have received renewed interest in recent years, because they can be used to implement entropy-stable, high-order CFD methods.  The focus of this renewed interest has been discontinuous-Galerkin-type discretizations, 
and limited, if any, attention has been paid to continuous-Galerkin analogies, despite their efficiency for low to moderate orders of accuracy.  To address this gap, I have presented a high-order, entropy-stable C-SBP discretization that uses a continuous representation of the solution.

The baseline C-SBP discretization is neutrally stable, so an important goal of this work was to develop a stabilization that is simultaneously entropy stable, well conditioned, and element local.  To meet these requirements, I advocated the use of additional nodal degrees of freedom to enable local-projection stabilization (LPS) at the element level.  While this solution is not optimal from the perspective of approximation theory, it is well suited to diagonal-norm SBP operators, which typically require more nodes than necessary for a degree $p$ polynomial basis.

For completeness, I reviewed the construction of the SBP operators used in this work.  The operators, which are designed for triangular elements, have $2p$ exact norms, vertex nodes, and diagonal boundary operators.  I also described three methods of constructing LPS operators; one suitable for degree $2p$ exact cubatures, one suitable for $2p-1$ exact cubatures, and one suitable for finite-difference discretizations more generally.
 
I verified the C-SBP discretizations using the linear advection equation and the Euler equations.  The discretizations exhibited near optimal, $p+1$ rates of convergence, as well as superconvergent functionals when implemented in a dual-consistent manner.  The results also established that the baseline C-SBP discretization is entropy conservative, and that LPS is entropy stable when applied to the entropy variables directly.

The results suggest that C-SBP discretizations are competitive with D-SBP discretizations for moderate degree $p \leq 4$ discretizations, where the interface penalties in D-SBP methods represent a significant fraction of the total computational expense.  Thus, I suspect that the C-SBP discretization will be most attractive when the discretization error is dominated by errors in some phenomenological model(s), such as a turbulence closure.  At the very least, the continuous approach warrants a closer examination than it has in the past.

\section{Acknowledgments}

My sincerest thanks to my students --- Anthony Ashley, Tucker Babcock, Garo Bedonian, Luiz Cagliari, Jared Crean, Sharanjeet Kaur, Kinshuk Panda, Ge Yan, and Jianfeng Yan --- for their feedback on an early draft of this paper.

All the results in this paper were obtained using software written in Julia~\cite{Bezanson2017julia}.  The plots were generated using Matplotlib~\cite{Hunter2007matplotlib}, with help from the Numpy~\cite{Oliphant2006guide,Van2011numpy} and Scipy~\cite{Jones2001Scipy} libraries.

\bibliographystyle{spmpsci}
\bibliography{references}

\end{document}